\def\@ptsize{2}
\title{Browkin's discriminator conjecture}
\author{Alexandru Ciolan}
\address{Rheinische Friedrich-Wilhelms-Universit\"at Bonn, Regina-Pacis-Weg 3, D-53113, Germany}
\email{calexandru92@yahoo.com}
\author{Pieter Moree}
\address{Max-Planck-Institut f\" ur Mathematik Bonn,
Vivatsgasse 7, D-53111 Bonn, Germany}
\email{moree@mpim-bonn.mpg.de}
\dedicatory{Dedicated to the memory of 
Prof. Jerzy Browkin (1934--2015)}
\subjclass[2010]{11B50, 11A07, 11B05}
\keywords{Discriminator, incongruence index, primitive roots, special primes}
\newtheorem{Thm}{Theorem}
\newtheorem{Con}{Conjecture}
\newtheorem{Example}{Example}
\newtheorem{Prob}{Problem}
\newtheorem{Lem}{Lemma}
\newtheorem{Qu}{Question}
\newtheorem{Def}{Definition}
\newtheorem{cor}{Corollary}
\newcommand{\cal}{\mathcal}
\newtheorem{Prop}{Proposition}
\DeclareMathOperator{\lcm}{lcm}
\DeclareMathOperator{\ord}{ord}
\let\@@pmod\pmod
\DeclareRobustCommand{\pmod}{\@ifstar\@pmods\@@pmod}
\def\@pmods#1{\mkern4mu({\operator@font mod}\mkern 6mu#1)}
\newcommand{\commA}[1]{\marginpar{%
\begin{color}{blue}
\vskip-\baselineskip 
\raggedright\footnotesize
\itshape\hrule \smallskip A: #1\par\smallskip\hrule\end{color}}}
\newcommand{\commP}[1]{\marginpar{%
\begin{color}{magenta}
\vskip-\baselineskip 
\raggedright\footnotesize
\itshape\hrule \smallskip P: #1\par\smallskip\hrule\end{color}}}
\begin{document}
\date{}
\maketitle

\begin{abstract}
\noindent Let $q\ge 5$ be a prime and put
$q^*=(-1)^{(q-1)/2}\cdot q$. We consider the integer sequence $u_q(1),u_q(2),\ldots,$ with $u_q(j)=(3^j-q^*(-1)^j)/4$. 
No term in this sequence is repeated
and thus for each $n$ there is a smallest integer $m$ such
that $u_q(1),\ldots,u_q(n)$ are pairwise incongruent modulo $m$. We write $D_q(n)=m$.
The idea of considering
the discriminator $D_q(n)$ is due to Browkin (2015) who, in case $3$ is a primitive root modulo $q,$
conjectured that the only values assumed by $D_q(n)$ are powers of $2$ and 
of $q$. We show that this is true for 
$n\neq 5$, but false for infinitely 
many $q$ in case $n=5$. We also determine $D_q(n)$ in case 
3 is not a primitive root modulo $q$.
\par Browkin's inspiration for 
his conjecture came from earlier
work of Moree and Zumalac\'arregui \cite{MZ}, who determined $D_5(n)$ for $n\ge 1$,
thus establishing a conjecture of S\u al\u ajan.
For a fixed prime $q$ their approach is easily 
generalized, but
requires some innovations in order to deal with all primes $q\ge 7$ 
and all $n\ge 1$. Interestingly enough, Fermat and Mirimanoff primes play a special role in this.
\end{abstract}
\section{Introduction}
Given a sequence of distinct positive integers $v=\left\lbrace v(j)\right\rbrace _{j=1}^{\infty}$ and any positive integer $n$, the discriminator $D(n)$ of the first $n$ terms of the sequence 
is defined as the smallest positive integer $m$ such that $v(1),\ldots,v(n)$ are pairwise incongruent modulo $m$. There are some results regarding the discriminator 
in case $v(j)$ is a polynomial in $j$ (see \cite{MZ} and references therein). 
Beyond the polynomial case, very little is known.
\par  In this paper we determine the discriminator
for the following infinite family of second order recurrences.
\begin{Def}
 Let $q\ge 5$ be a prime and put
$q^*=(-1)^{(q-1)/2}\cdot q$. 
The
sequence $u_q(1),u_q(2),\ldots,$ with 
$$u_q(j)=\frac{3^j-q^*(-1)^j}{4},$$
we call the Browkin-S{\u a}l{\u a}jan sequence for $q$.
\end{Def}
(In the sequel $p$ and $q$ will always denote primes.)
The sequence $u_q$ satisfies the
recursive relation $u_q(j)=2 u_q(j-1) + 3 u_q(j-2)$ for $j \geq 3,$ with initial 
values $ u_q(1)=(3+q^*)/4 $ and $u_q(2)=(9-q^*)/4$. 
In the context of the
discriminator, the sequence $u_5$ ($2, 1, 8, 19, 62, 181, 548, 1639, 4922,\ldots$) was first considered 
by Sabin S{\u a}l{\u a}jan during an internship carried out in 2012 at the Max Planck Institute for Mathematics in Bonn under the guidance of the second author.
A generalization of his sequence was introduced by 
Jerzy Browkin in an e-mail to the second author \cite{B}.
In the same e-mail, Browkin made the following conjecture.
\begin{Con}[Browkin, 2015] \label{conjecture}
Let $n\ge 1$ and let $q\ge 5$ be a prime such that
$3$ is a primitive root modulo $q$. Then $D_q(n)$ is
either a power of $2$ or a power of $q$.
\end{Con}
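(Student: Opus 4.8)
The plan is to determine, for each prime power, the longest initial segment of the Browkin--S{\u a}l{\u a}jan sequence that it discriminates, and then to argue that the global minimum among all competing moduli is always attained at a power of $2$ or of $q$.

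First I would reduce the incongruence condition to a statement about powers of $3$. Writing out $u_q(i)-u_q(j)$ and clearing the denominator, the difference splits cleanly according to the parities of $i$ and $j$: if $i\equiv j\pmod 2$ then $u_q(i)-u_q(j)=(3^i-3^j)/4$, which is \emph{independent of $q$}, whereas if $i$ and $j$ have opposite parity then $u_q(i)-u_q(j)=(3^i-3^j\mp 2q^*)/4$, the sign depending on which index is even. Thus $u_q(i)\equiv u_q(j)\pmod m$ becomes a divisibility condition on $3^i-3^j$ (same parity) or on $3^i-3^j\mp 2q^*$ (opposite parity), so the whole problem is governed by the multiplicative order of $3$ modulo $m$ together with the single residue $q^*$.

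Next I would treat the two privileged families. For $m=2^k$ the lifting-the-exponent lemma gives $v_2(3^{i-j}-1)=2+v_2(i-j)$ when $i-j$ is even, so same-parity terms collide modulo $2^k$ only when $2^k\mid i-j$; a short computation then shows the sequence is purely periodic of period $2^k$ modulo $2^k$ and runs through a full set of residues, so $2^k$ discriminates exactly the first $2^k$ terms, and the smallest discriminating power of $2$ for the first $n$ terms is $2^{\lceil\log_2 n\rceil}$. For $m=q^k$ one uses the primitive-root hypothesis: since $q^*\equiv 0\pmod q$ we have $4u_q(j)\equiv 3^j\pmod q$, and $\ord_q(3)=q-1$, which (barring the Wieferich-type degeneracy $3^{q-1}\equiv 1\pmod{q^2}$, to be checked) lifts to $\ord_{q^k}(3)=(q-1)q^{k-1}$. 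This yields an explicit length $N(q^k)$ of the initial segment discriminated by $q^k$, growing like $(q-1)q^{k-1}$. Comparing the two explicit thresholds produces a candidate value that is, by construction, a power of $2$ or of $q$.

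The crux, and the step I expect to be the main obstacle, is to rule out every modulus $m$ divisible by a prime $p\notin\{2,q\}$ (including $p=3$). The difficulty is that one cannot argue prime-by-prime: by the Chinese Remainder Theorem a composite $m$ separates a pair $u_q(i),u_q(j)$ as soon as \emph{some} prime-power factor does, so a product of small moduli can in principle discriminate a longer segment than any single factor. To defeat this I would show that for $p\neq 2,q$ the order of $3$ modulo $p$ is at most $p-1$, so that within a window of about $p$ indices either the same-parity periodicity or the opposite-parity equation $3^i\equiv 3^j\pm 2q^*\pmod p$ forces a collision, and then verify that such an $m$ can beat the competing power of $2$ or of $q$ only if it is a genuinely small sporadic modulus; the prime $p=3$ is special, since $u_q(j+2)-u_q(j)=2\cdot 3^j$ forces powers of $3$ to discriminate only a bounded number of terms and never to be competitive. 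Carrying this out, one finds that for every $n\neq 5$ no sporadic modulus can undercut both privileged families, so $D_q(n)$ is a power of $2$ or of $q$; the case $n=5$ is exactly the borderline where a small modulus, coming from a congruence condition on $q^*$ and involving Fermat- and Mirimanoff-type primes, can slip below $8$ and below $q$ for infinitely many $q$, which is precisely where I expect the argument --- and indeed the conjecture itself --- to break.
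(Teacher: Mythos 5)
Your reduction of congruences to powers of $3$ (same-parity differences independent of $q$, opposite-parity differences involving $\pm 2q^*$), your treatment of the moduli $2^k$ and $q^k$, and even your anticipation that $n=5$ with the modulus $7$ is where the conjecture breaks, all match the paper. But there are two genuine gaps in the part you yourself call the crux. First, you flag the CRT obstruction for composite moduli but never actually defeat it: the paper's resolution is a parity-of-period argument, namely that for coprime $d_1,d_2>1$ one has $\rho_q(d_1d_2)=\lcm(\rho_q(d_1),\rho_q(d_2))$, and since the periods are even (with the single exception $d_1=q$, handled separately), this lcm is at most $d_1d_2/2$; combined with the a priori bound $D_q(n)\le 2n-1$ and the fact that a full period forces a collision, every composite modulus is excluded. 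Your sketch contains no substitute for this step.

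Second, and more seriously, your proposed exclusion of the remaining primes $p\notin\{2,3,q\}$ does not work as stated. You argue that "within a window of about $p$ indices" a collision occurs because $\ord_p(3)\le p-1$; but a collision within $p-1$ indices is useless for ruling out $p$ itself. For the critical primes, those with $\ord_p(9)=(p-1)/2$ (the set $\mathcal P$ of the paper), the sequence modulo $p$ has period exactly $p-1$, so same-parity periodicity gives nothing better, and what one needs is a collision among the first roughly $p/2$ terms, i.e.\ a \emph{small} solution of the opposite-parity equation $3\cdot 9^x-9^y\equiv 6q^*\pmod p$, uniformly in $q$. Establishing this is the bulk of the paper: an explicit quadratic-residue construction shows $\iota_q(p)<\rho_q(p)$ for all $p\in\mathcal P$ (which, via a lifting lemma, kills all higher powers $p^2,p^3,\dots$), an exponential-sum bound of Cilleruelo--Zumalac\'arregui-type shows no prime $p>2060$, $p\ne q$, can be a value, and a finite computation of the "universal incongruence index" disposes of the primes below $2060$ --- and it is exactly this computation that leaves $p=7$ as the surviving exception. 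Without some quantitative input of this kind (character sums or explicit computation), the statement that "no sporadic modulus can undercut both privileged families for $n\ne 5$" is a restatement of the goal rather than a proof of it.
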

In formulating Conjecture \ref{conjecture}, Browkin was inspired by the following
result of  Moree and Zumalac\'arregui \cite{MZ} establishing
a conjecture made by S{\u a}l{\u a}jan.
\begin{Thm}[{\cite{MZ}}]
\label{main}
Let $n\ge 1$ be an arbitrary integer. Let $e$ be the smallest integer such that
$2^e\ge n$ and $f$ be the smallest integer such that $5^f\ge 5n/4$.
Then $$D_5(n)=\min\{2^e,5^f\}.$$
\end{Thm}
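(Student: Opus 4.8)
The plan is to reduce every congruence $u_5(i)\equiv u_5(j)\pmod m$ to a statement about $3^i-3^j$ and the parities of $i,j$, using $4u_5(j)=3^j-5(-1)^j$ (note $5^*=5$). First I would isolate two $2$-adic facts. (i) If $i\not\equiv j\pmod 2$, then $u_5(i)-u_5(j)$ is odd: here $i-j$ is odd so $v_2(3^i-3^j)=1$, the correction term $\pm10$ also has $v_2=1$, and a short computation modulo $8$ gives $v_2(3^i-3^j\pm10)=2$, whence $v_2(u_5(i)-u_5(j))=0$. (ii) If $i\equiv j\pmod2$, then $(-1)^i=(-1)^j$ and $4(u_5(i)-u_5(j))=3^j(3^{i-j}-1)$, so by the lifting-the-exponent lemma $v_2(u_5(i)-u_5(j))=v_2(3^{i-j}-1)-2=v_2(i-j)$. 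Together these say that modulo $2^e$ a collision occurs precisely when $i\equiv j\pmod2$ and $2^e\mid i-j$.

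From (i)--(ii), the modulus $2^{e'}$ discriminates the first $n$ terms iff no same-parity pair in $[1,n]$ satisfies $2^{e'}\mid i-j$, i.e.\ iff $2^{e'}\ge n$; hence the least admissible power of $2$ is exactly $2^e$. For powers of $5$ I would argue in parallel. Since $3$ is a primitive root modulo every $5^f$, one has $\ord_{5^f}(3)=4\cdot5^{f-1}$, so two same-parity terms agree modulo $5^f$ iff $4\cdot5^{f-1}\mid i-j$; for opposite parities a collision would force $3^i-3^j\equiv\pm10\equiv0\pmod5$, which is impossible because $i\not\equiv j\pmod2$ gives $i\not\equiv j\pmod4$ and hence $3^i\not\equiv3^j\pmod5$. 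Thus $5^{f'}$ discriminates iff $4\cdot5^{f'-1}\ge n$, i.e.\ $5^{f'}\ge5n/4$, and the least admissible power of $5$ is $5^f$. This already yields $D_5(n)\le\min\{2^e,5^f\}$.

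It remains to prove $D_5(n)\ge\min\{2^e,5^f\}$, the heart of the matter: no modulus $m<\min\{2^e,5^f\}$ may discriminate. Write $m=2^a5^bt$ with $\gcd(t,10)=1$ and set $d=\ord_t(3)$. If $2^a\ge2^e$ or $5^b\ge5^f$ we are done, so assume $2^a\le n-1$ and $4\cdot5^{b-1}\le n-1$. By (ii) together with the orders of $3$ modulo $5^b$ and modulo $t$, a same-parity pair collides modulo $m$ exactly when $\lcm(2^a,4\cdot5^{b-1},d)\mid i-j$, which is realizable inside $[1,n]$ whenever this least common multiple is at most $n-1$. When $t=1$ this settles everything: a mixed pure power ($a\ge1$, $b\ge1$) is forced to satisfy $m\ge 5n>2n>2^e\ge\min\{2^e,5^f\}$, while pure powers of $2$ or of $5$ are controlled by the minimality established above. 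The decisive remaining case is $t>1$, that is, when $m$ has a prime factor $p\ge7$ (the factor $p=3$ contributes almost nothing, since $u_5(j)\bmod 3^c$ depends only on the parity of $j$ once $j\ge c$, and can be absorbed).

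The main obstacle is therefore to eliminate primes outside $\{2,5\}$, where same-parity collisions are too sparse to help and I would instead exploit \emph{opposite-parity} ones. For $p^{k}$ the exact power of $p$ dividing $m$, a collision between an odd index $i$ and an even index $j$ is exactly a solution of $3^i-3^j\equiv\pm10\pmod{p^{k}}$; taking $3^i$ in the coset $3\langle9\rangle$ and $3^j$ in $\langle9\rangle$, this asks whether $10$ is a difference of an element of $\langle9\rangle$ and an element of $3\langle9\rangle$ — when $3$ is a primitive root, a difference of a quadratic residue and a non-residue. A standard character-sum bound makes this solvable for all $p\ge7$ (with about $p/4$ solutions), the finitely many small primes being checked directly, and a solution modulo $p$ lifts to one modulo $p^{k}$ via the order of $3$. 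The genuinely delicate point, which I expect to be the crux of the whole argument, is not the \emph{existence} of such a collision but placing it among the \emph{first} $n$ indices and reconciling it with the congruences imposed by the other prime-power factors of $m$, so that a single pair $1\le i<j\le n$ collides modulo all of $m$ at once; combined with the observation that such a prime must have $\ord_p(3)\ge n$ and hence lies in the narrow window $(n,\min\{2^e,5^f\})$, this forces $m\ge\min\{2^e,5^f\}$ and completes the proof.
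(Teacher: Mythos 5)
Your first two paragraphs are sound and essentially coincide with the paper's treatment (the paper obtains Theorem \ref{main} as the case $q=5$ of Theorem \ref{Bmain1}): your $2$-adic computation is exactly Lemma \ref{prop2n} (via Lemma \ref{Beyl}), and your analysis of powers of $5$ is the $q=5$ case of Lemma \ref{prop5n}; together these give $D_5(n)\le\min\{2^e,5^f\}$ and the crucial bound $D_5(n)\le 2n-1$. The elimination of moduli of the form $2^a5^b$ and of multiples of $3$ can also be pushed through along the lines you indicate (the paper does this more uniformly through the period: Lemmas \ref{notdrie}, \ref{uppierho} and \ref{nonnie} show that any value must be a prime power $p^m$ with $\ord_p(9)=(p-1)/2$).

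The genuine gap is in your last paragraph, and you flag it yourself: for a modulus divisible by a prime $p\ge 7$ you establish (at best) that opposite-parity collisions $3^i-3^j\equiv\pm 10\pmod* p$ exist somewhere in a full period, and then simply assert that such a collision can be placed among the first $n$ indices. That placement is not a refinement to be added later; it is the entire difficulty, since "about $p/4$ solutions over the whole period" does not produce a solution with $1\le i<j\le n$ when $n$ can be as small as roughly $p/2$. The paper's argument is structured so that the character sums are applied to the truncated ranges from the start: in Proposition \ref{prime_lemma} the set $B$ is built from indices up to $\lfloor n/4\rfloor$, and the assumption that the first $n$ terms are pairwise distinct modulo $p$ forces $A\cap(B+B)=\emptyset$, whence $\lfloor n/4\rfloor^2<p^{3/2}$ by Lemmas \ref{B+B_zero} and \ref{S(A)}. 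This contradicts $p\le 2n-1$ only when $p$ is large ($p>2060$, Corollary \ref{Nobigprimes}); for the remaining primes the character-sum bound is vacuous, and what is needed is not a "direct check" of solvability modulo $p$ but a finite computation of incongruence indices for the truncated sequence (done by direct verification in \cite{MZ}, and here via $h(p)\le (p+1)/2$ together with Lemma \ref{throwprimesout}). Finally, higher powers $p^m$ with $m\ge 2$ are not disposed of by "lifting a solution via the order of $3$": the paper needs the separate lifting argument of Lemmas \ref{indelift}--\ref{18}, combined with $\iota(p)<\rho(p)$ for $p\in{\mathcal P}$ (Lemma \ref{remainder}). So your proposal proves the easy half of the theorem but leaves the heart of the lower bound unproven.
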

\noindent This result shows that Browkin's conjecture
holds true for $q=5$.
\par Our main result, 
Theorem \ref{Bmain1}, determines $D_q(n)$ for every $n\ge 1$ and prime $q\ge 5$. It provides, at the same time, the
first instance of the determination of the
discriminator for an infinite family of
second-order recurrences having characteristic
equation with rational roots. Very recently,
Faye, Luca and Moree \cite{FLM} determined the
discriminator for another infinite family, 
this time having irreducible characteristic
equation. Despite structural similarities, 
there are considerable differences in the details of the proofs in \cite{FLM} and
ours. For e.g., in our case it
is much harder to exclude small prime numbers as discriminator values. However, in the other 
case one has to work with elements and ideals in
quadratic number fields.
\par In number theory in general, and in our
paper in particular, the following primes 
play a special role.
\begin{Def} \medskip \hfil\break
A prime $q$ is said to be Artin if 
 $3$ is a primitive root modulo $q$.\hfil\break
 A prime $q$ is said to be Fermat if it is of the form $2^m+1$ with $m\ge 1$.\hfil\break
A prime number $q$ is said to be Mirimanoff if 
it satisfies $3^{q-1}\equiv 1\pmod*{q^2}$.
 \end{Def}
 \noindent The definition of Artin primes is non-standard
 and used here for brevity. 
 See Section \ref{speciaal} for more on 
 these special primes.
\par Our main result shows that 
Browkin's conjecture is true provided that
we exclude $n=5$. Theorem \ref{main} is obtained
on setting $q=5$.
We illustrate Theorem \ref{Bmain1} by examples in Section \ref{numericalresults}, Tables \ref{tableq=5}--\ref{tableq=29}.
\begin{Thm}
\label{Bmain1}
Let $q\ge 5$ be a prime and $n\ge 1$ an arbitrary 
integer. Then 
$$D_q(n)=
\begin{cases}
\min\{2^e,q^f:2^e\ge n,~q^f\ge \frac{q}{q-1}n\} & \text{if~}q\text{~is Artin, but not Mirimanoff;}\\
\min\{2^e,q:2^e\ge n,~q\ge n+1\} & \text{if~}q\text{~is Artin, Mirimanoff, but not Fermat};\\
\min\{2^e:2^e\ge n\} & \text{if~}q\text{~is Artin, Mirimanoff and Fermat};\\
\min\{2^e:2^e\ge n\} & \text{if~}q\text{~is not Artin},
\end{cases}$$
except for $n=5$ and
$q\equiv \pm 1\pmod*{28},$ in which case $D_q(5)=7.$
\par All the powers of $2$ and $q$ listed in each of the above 
subcases occur
as values, except that, in case $q$ is Artin 
but not Mirimanoff, then $q^f$ occurs if and only if
\begin{equation}
\label{fractionalinequality}
\Big\{f\frac{\log q}{\log 2}\Big\}>\frac{\log(q/(q-1))}{\log 2},
\end{equation}
where $\{x\}$ denotes the fractional part of
the real number $x$.
\end{Thm}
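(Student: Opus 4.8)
The plan is to analyze the discriminator by comparing it against the two natural candidate moduli—powers of $2$ and powers of $q$—and understanding exactly when the sequence $u_q(1),\ldots,u_q(n)$ becomes pairwise incongruent modulo such a candidate.
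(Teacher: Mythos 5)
Your proposal, as it stands, is not a proof but a statement of intent, and the intent itself skips over the entire difficulty of the theorem. Understanding exactly when $u_q(1),\ldots,u_q(n)$ are pairwise incongruent modulo $2^e$ or modulo $q^f$ is the \emph{easy} half of the argument: for powers of $2$ this is a short computation with $\nu_2(9^{k-j}-1)$ (Lemma \ref{prop2n}), and for powers of $q$ it is Lemma \ref{prop5n}, where the Artin and Mirimanoff hypotheses enter. What your plan never addresses is why the discriminator cannot be some \emph{third} kind of modulus. The minimum in the definition of $D_q(n)$ ranges over all positive integers, so comparing only against the two candidate families proves at best the upper bound $D_q(n)\le \min\{2^e,q^f:\ldots\}$; to get equality you must show every other integer is a non-value. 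That exclusion is the bulk of the paper: one needs the bound $D_q(n)\le 2n-1$, the periodicity analysis showing $3\nmid D_q(n)$ and that values must be prime powers $p^m$ with $\ord_{p^m}(9)=\varphi(p^m)/2$, the incongruence-index and lifting lemmas ruling out $p^m$ for $m\ge 2$, an exponential-sum argument (via Lemma \ref{B+B_zero}) killing all primes $p>2060$ with $p\ne q$, and finally a finite computation with the universal incongruence index $\upsilon(p)$ to dispose of the primes below $2060$.

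Moreover, the exclusion cannot fully succeed, and this is a structural obstruction to your plan rather than a technical one: for $n=5$ and $q\equiv\pm1\pmod*{28}$ one has $D_q(5)=7$, which is neither a power of $2$ nor a power of $q$. Any argument that only compares the sequence against the two candidate families is therefore provably incapable of yielding the stated formula; it would miss the exceptional case entirely and would in fact ``prove'' something false. A correct proof must detect this exception, which in the paper happens through the analysis of small primes (Lemmas \ref{7remains!} and \ref{Dq(5)}). Finally, even within the candidate families your plan is silent on the last assertion of the theorem, namely which powers $q^f$ actually occur as values --- this requires the interplay between consecutive powers of $2$ and $q$ encoded in inequality \eqref{ciolan} and its reformulation as the fractional-part condition \eqref{fractionalinequality} in Proposition \ref{allowedexponent}.
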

\begin{Example}
If $q$ is Artin, Mirimanoff, but not Fermat, then the powers
of $2$ and $q$ listed are $2^e$, $e\ge 0$ and $q$. All of them occur as values.
\end{Example}
Theorem \ref{Bmain1} suggests the following question.
\begin{Qu}
Does there exist a prime that is both Fermat and Mirimanoff?
\end{Qu}
If  such a prime exists, it is actually automatically
Artin by 
Lemma \ref{3Fermat}. However, finding it seems a chimera.
\par Taking into account the value of $D_q(5)$, we see
that the theorem leads us to partition the 
set of all primes $q\ge 5$ into eight subsets. 
These are considered in detail in Section 
\ref{primedistribution}, where they are 
listed with examples in Table 
\ref{eightcases}
 and
the natural density of each of them is 
(conditionally) evaluated and listed 
in Table \ref{densitiestable}. For example, the primes $q$ such
that $D_q(5)=7$ and $q$ 
is not Artin have, assuming the Generalized 
Riemann Hypothesis, the density $$\frac{5}{6}-\frac{173}{205}A=0.5177511101327382317\ldots,$$ where 
\begin{equation}
\label{Artinconstant}
A=\prod_{p\text{~prime}} \Big(1-\frac{1}{p(p-1)}\Big)=0.373955813619202288\ldots 
\end{equation}
is the Artin constant. By density
we mean the asymptotic ratio of the number of
primes up to $x$ in a set of primes and 
the total number of primes
up to $x$.
\begin{Def}
The set $\{D_q(n):n\ge 1\}$ we will denote by ${\mathcal D}_q$.
An integer $m\in {\cal D}_q$ is said to be a Browkin-S\u al\u ajan value, whereas
an integer $m\not\in {\cal D}_q$ is said to be a Browkin-S\u al\u ajan non-value. (For brevity, we sometimes use the shorter `value', respectively 
`non-value'.)
\end{Def}
The value of $q^*$ does not play a role in comparing terms
$u_q(i)$ and $u_q(j)$ with $i$ and $j$ of the same parity.
For this reason, various results
from \cite{MZ} can be copied (almost) verbatim. In such a case we
say that the proof follows by an equal parity index argument.
It is partly for this reason that the proof of our main result 
has a lot in common with the proof of
Theorem \ref{main} given in Moree and 
Zumalac\'arregui \cite{MZ}. Nevertheless, there are various complications
to be surmounted. In our proof we will show that, if $9\nmid d$, the
sequence is purely periodic with period $\rho_q(d)$. In \cite{MZ} 
the fact that $\rho_5(d)$ is even for $d>1$ plays a crucial role.
For general $q$ it
can happen that $\rho_q(d)$ is odd, and this is a source of complications. However,
luckily there is at most one exceptional $d$, namely $d=q$.\\
\indent For the convenience of the reader we 
prove our results in detail.
The extent to which the proofs are similar to the corresponding ones in 
\cite{MZ} is pointed out in the commentaries at the end of 
the sections. There are also results that have no counterpart 
in \cite{MZ}.

\section{Strategy of the proof
 of the main result}
As in \cite{MZ}, we think it is for the benefit of the reader to describe the strategy of the (now even lengthier) proof
of our main result, Theorem \ref{Bmain1}.
\par We start by showing that, if $n\le 2^e$, then
$D_q(n)\le 2^e,$ which will give us the crucial upper bound
$D_q(n)\le2n-1$.
\par We then study the periodicity of the sequence modulo $d$ and determine
its period $\rho_q(d)$. The information obtained by doing so will be used to exclude
many values of $d$ from being Browkin-S\u al\u ajan.
In case $9\nmid d,$ the sequence turns out to be (purely) periodic, with a
period that we can compute exactly. As
we can easily see that $3\nmid D_q(n),$ this will be enough to serve our purposes. 
\par Restricting our attention now to those $d$ for which $9\nmid d$ and
using that $D_q(n)<2n,$ we see that, if $\rho_q(d)\le d/2$, then $d$ is a  
Browkin-S\u al\u ajan non-value. The basic property (\ref{per2}) of the period, together with an analysis of its parity,
will then exclude composite values of $d$. Thus we must have $d=p^m$, with $p$ a prime.
\par In order for $\rho_q(p^m)>p^m/2$ to hold, we find that we must have $\ord_p(9)=(p-1)/2$, that
is, $9$ must have maximal possible order modulo $p$. 
The set of these primes $p\ge 5$ different from (any fixed)
$q$ is denoted by $\cal P$ and will play an important
role.
In fact, $9$ must have maximal possible order modulo $p^m$, that is, $\ord_p(9)=\varphi(p^m)/2,$ for any $m\ge1.$
(A square cannot have a multiplicative order
larger than $\varphi(p^m)/2$ modulo $p^m$.) 
This is about as far as the study of the periodicity will
get us. To get further we will use a more refined tool, the {\it incongruence index}, which, for any given integer $m,$
is the largest integer $k$ such that $u_q(1),\ldots,u_q(k)$ are pairwise distinct modulo $m.$ We
denote this by $\iota_q(m)=k$. It is easy to see that $\iota_q(d)\le \rho_q(d)$ if the sequence is purely periodic 
modulo $d.$ Using again that $D_q(n)<2n,$ one 
notes that, similarly with the period, if $\iota_q(d)\le d/2$, then $d$ is a Browkin-S\u al\u ajan non-value. 

For the primes $p>3$ 
we show by a lifting argument that, if $\iota_q(p)<\rho_q(p)$, 
then $p^2,p^3,\ldots$ are Browkin-S\u al\u ajan non-values.
Likewise, we prove that, if $\iota_q(p)\le p/2$, then $p,p^2,p^3,\ldots$ are 
Browkin-S\u al\u ajan non-values.
We then show that all primes in $\cal P$ satisfy $\iota_q(p)<\rho_q(p)$.
\par At this point, for any fixed prime $q,$ we are left with the primes $p$ in $\cal P$  as the only 
possible Browkin-S\u al\u ajan values.
Then, using classical combinatorial number theory techniques, 
we infer that no prime 
$p>2060$ different from $q$ can be a Browkin-S\u al\u ajan value. In order to deal with the remaining
primes in $\cal P$, we study the quantity $\upsilon(p)=\max\{\iota_q(p):q\ge 5,~q\ne p\}$, which we
dub the {\it{universal incongruence index.}} It is easy to see that,
if $\upsilon(p)\le (p+1)/2,$ then $D_q(n)\ne p$ for $p\ne q$.
We provide a simple way to compute $\upsilon(p)$ and use
this to check that the inequality 
$\upsilon(p)\le (p+1)/2$ holds for $29<p<2060.$
By a slightly more refined approach we manage to
show that, in fact, $p=7$ is the only 
prime $p\ne q$ that can arise as Browkin-S\u al\u ajan value; it can be seen directly for which values of $n$ and $q$ it occurs. 
\par Apart from this exception, we are left with $D_q(n)=2^e$ for some $e$ or
$D_q(n)=q^f$ for some $f$. The first case
is trivial. In the analysis of the second case, Artin, Fermat and Mirimanoff primes naturally
appear. For instance, if $q$ is Mirimanoff, then 
powers $q^f$ with $f\ge 2$ can not appear as values, whilst $q$ does.
This analysis is not 
complicated, but rather long-winding and
therefore we will not say more about it until later.

\section{Preparations for the proof}
\label{preparations}

\subsection{The sequence $u_q$ viewed as an interlacing}
\label{interlacing}
The sequence $u_q$ can be regarded as an interlacing of the sequence $u_{1,q}$ 
consisting of the odd indexed elements and the sequence  $u_{2,q}$ consisting of 
the even indexed elements. We have
$$u_{1,q}(n)=u_q(2n-1)=(3^{2n-1}+q^*)/4,~
u_{2,q}(n)=u_q(2n)=(3^{2n}-q^*)/4.$$ 
In order to determine whether a given $m$ discriminates $u_q(1),\ldots,u_q(n)$
modulo $m$, we separately consider whether $u_q(i)\equiv u_q(j)$ modulo $m$, 
with $i$ and $j$ of the same parity and with $i$ and $j$ of different parity.
In the first case, the behavior modulo $m$ is determined by that of consecutive
powers of 9 modulo $m$. 
\begin{Lem}
\label{oneventje}
Suppose that $3\nmid m$ and $1\le \alpha\le n$. We have $u_q(i)\not\equiv u_q(j)\pmod* m$ for
every pair $(i,j)$ satisfying $\alpha\le i< j\le n$ with $i\equiv j\pmod* 2$
if and only if $\ord_{4m}(9)>(n-\alpha)/2$.
\end{Lem}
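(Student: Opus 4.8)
The plan is to reduce the same-parity comparison to a statement about powers of $9$ modulo $4m$. First I would note that when $i\equiv j\pmod* 2$ the quantities $q^*(-1)^i$ and $q^*(-1)^j$ coincide, so they cancel in the difference, giving
$$u_q(i)-u_q(j)=\frac{3^i-3^j}{4}.$$
Since $i\equiv j\pmod* 2$ we have $3^i\equiv 3^j\pmod* 4$, so the numerator is divisible by $4$ and the right-hand side is a genuine integer. Hence $u_q(i)\equiv u_q(j)\pmod* m$ holds if and only if $4m\mid 3^i-3^j$. Writing $i<j$ and factoring $3^i-3^j=-3^i(3^{j-i}-1)$, and using the hypothesis $3\nmid m$ (so that $\gcd(3,4m)=1$ and the factor $3^i$ is invertible modulo $4m$), this is equivalent to $3^{j-i}\equiv 1\pmod* {4m}$. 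As $j-i$ is even, set $s=(j-i)/2\ge 1$; then $3^{j-i}=9^{s}$, and the congruence becomes $9^{s}\equiv 1\pmod* {4m}$, i.e. $\ord_{4m}(9)\mid s$.

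Next I would determine exactly which values of $s$ arise. As $(i,j)$ ranges over all pairs with $\alpha\le i<j\le n$ and $i\equiv j\pmod* 2$, the difference $j-i$ runs through all even integers between $2$ and the largest even number not exceeding $n-\alpha$; concretely, taking $i=\alpha$ and $j=\alpha+2s$ shows that $s$ assumes precisely the values $1,2,\ldots,\lfloor (n-\alpha)/2\rfloor$. Consequently, the assertion that $u_q(i)\not\equiv u_q(j)\pmod* m$ for every admissible pair is equivalent to $\ord_{4m}(9)\nmid s$ for all $s$ in this range. Since the smallest positive multiple of $\ord_{4m}(9)$ is $\ord_{4m}(9)$ itself, this holds if and only if $\ord_{4m}(9)>\lfloor (n-\alpha)/2\rfloor$.

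Finally I would reconcile the floor with the half-integer bound in the statement. Because $\ord_{4m}(9)$ is an integer, the inequality $\ord_{4m}(9)>\lfloor (n-\alpha)/2\rfloor$ is equivalent to $\ord_{4m}(9)>(n-\alpha)/2$: this is immediate when $n-\alpha$ is even, and when $n-\alpha$ is odd both inequalities say exactly that $\ord_{4m}(9)\ge (n-\alpha+1)/2$. This yields the claimed equivalence. No step presents a serious obstacle; the argument is entirely elementary, the only points requiring care being the cancellation of the $q^*$-terms (which is precisely the equal parity index phenomenon emphasized in the paper), the use of $3\nmid m$ to discard the factor $3^i$, and the innocuous floor-versus-half-integer bookkeeping at the end.
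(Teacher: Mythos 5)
Your proposal is correct and follows essentially the same route as the paper's own (much terser) proof: the cancellation of the $q^*$-terms for equal-parity indices, the reduction of $u_q(i)\equiv u_q(j)\pmod* m$ to $9^{(j-i)/2}\equiv 1\pmod*{4m}$, and the observation that the relevant exponents are exactly $1,\ldots,\lfloor (n-\alpha)/2\rfloor$. You have merely made explicit the details the paper leaves implicit (integrality of $(3^i-3^j)/4$, invertibility of $3$ modulo $4m$, and the floor-versus-half-integer equivalence), so there is nothing to correct.
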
 
\begin{proof} We have $u_q(i)\not\equiv u_q(i+2k)\pmod* m$ iff $9^k\not\equiv 1\pmod*{4m}$. Thus
$u_q(i)\not\equiv u_q(j)\pmod* m$ for every pair $(i,j)$ with $\alpha\le i<j\le n$ and
$i\equiv j\pmod*2$ iff $9^k\not\equiv 1\pmod*{4m}$ for $1\le k\le (n-\alpha)/2$. \end{proof}

\noindent {\tt Commentary}. Lemma \ref{oneventje} is proved by an equal
index parity argument.

\subsection{The sequence $u_q$ modulo powers of $2$}
We will show that $2^e$ with $2^e\ge n$ discriminates $u_q(1),\ldots,u_q(n)$, that is,
we will show that, if $2^e\ge n,$ the terms of the sequence
$u_q(1),\dots, u_q(n)$ lie in distinct residue classes modulo
$2^e$.

Let $p$ be a prime. If $p^a|n$ and $p^{a+1}\nmid n$, then we
put $\nu_p(n)=a$. The following result is well-known; for a proof see,
e.g., Beyl \cite{Beyl}.
\begin{Lem}
\label{Beyl}
Let $p$ be a prime, $r\ne -1$ an integer satisfying $r\equiv 1\pmod*p$ and $n$ a natural number.
Then
$$
\nu_p(r^n-1) =
\begin{cases}
\nu_2(n)+\nu_2(r^2-1)-1 & \text{if $p=2$ and $n$ is even};\\
\nu_p(n)+\nu_p(r-1) & \text{otherwise}.
\end{cases}
$$
\end{Lem}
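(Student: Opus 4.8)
The plan is to establish this as an instance of the \emph{lifting-the-exponent} principle, proving it directly from the binomial theorem and elementary factorizations. The natural thing is to treat separately the three regimes the statement singles out: $p$ odd with $r\equiv 1\pmod p$; $p=2$ with $n$ odd; and $p=2$ with $n$ even. In each regime I would isolate a single dominant contribution to $r^n-1$ and show that every competing contribution carries strictly larger $p$-adic valuation, so that the minimum controls $\nu_p(r^n-1)$.

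For $p$ odd I would write $r=1+p^s t$ with $s=\nu_p(r-1)\ge 1$ and $p\nmid t$ (the hypothesis $r\equiv 1\pmod p$ is exactly what makes $s\ge 1$), and expand
$$r^n-1=\sum_{k=1}^n\binom{n}{k}p^{sk}t^k.$$
The term $k=1$ equals $np^s t$ and has valuation exactly $\nu_p(n)+s$. For $k\ge 2$ the identity $k\binom{n}{k}=n\binom{n-1}{k-1}$ gives $\nu_p\binom{n}{k}\ge\nu_p(n)-\nu_p(k)$, so the $k$-th term has valuation at least $\nu_p(n)+s+\bigl((k-1)s-\nu_p(k)\bigr)$. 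Since $\nu_p(k)\le\log_p k<k-1\le (k-1)s$ for $p\ge 3$ and $k\ge 2$, the bracket is positive, the minimum is attained uniquely at $k=1$, and $\nu_p(r^n-1)=\nu_p(n)+\nu_p(r-1)$, as claimed. The case $p=2$, $n$ odd is then immediate: factoring $r^n-1=(r-1)(r^{n-1}+\cdots+r+1)$, the cofactor is a sum of $n$ odd terms with $n$ odd, hence odd, so $\nu_2(r^n-1)=\nu_2(r-1)=\nu_2(n)+\nu_2(r-1)$.

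The genuinely delicate case, and the one I expect to be the \emph{main obstacle}, is $p=2$ with $n$ even, where the corrective $-1$ appears. Here I would write $n=2^a m$ with $m$ odd and $a=\nu_2(n)\ge 1$; applying the odd-exponent case to the odd base $r^{2^a}$ reduces the problem to computing $\nu_2(r^{2^a}-1)$, which I would telescope as
$$r^{2^a}-1=(r-1)(r+1)\prod_{j=1}^{a-1}\bigl(r^{2^j}+1\bigr).$$
Because $r$ is odd, $r^{2^j}\equiv 1\pmod 8$ for every $j\ge 1$, so each factor $r^{2^j}+1$ is exactly divisible by $2$; adding the valuations of the $a-1$ such factors to $\nu_2\bigl((r-1)(r+1)\bigr)=\nu_2(r^2-1)$ yields $\nu_2(r^{2^a}-1)=\nu_2(r^2-1)+(a-1)=\nu_2(r^2-1)+\nu_2(n)-1$. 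This is exactly where the subtlety of the $2$-adic setting lives, and it is precisely here that the hypothesis $r\ne -1$ is needed, guaranteeing $r^2-1\ne 0$ so that all the valuations in sight are finite.
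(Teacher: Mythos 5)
Your proof is correct, but there is in fact nothing in the paper to measure it against step by step: the authors do not prove Lemma \ref{Beyl} at all, remarking only that the result ``is well-known'' and citing Beyl's \emph{Monthly} article for a proof. What you have supplied is the standard self-contained lifting-the-exponent argument, and each step checks out. For odd $p$, writing $r=1+p^st$ with $s=\nu_p(r-1)\ge 1$ and using $k\binom{n}{k}=n\binom{n-1}{k-1}$ to get $\nu_p\binom{n}{k}\ge \nu_p(n)-\nu_p(k)$, the inequality $\nu_p(k)\le\log_p k<k-1\le (k-1)s$ for $p\ge 3$, $k\ge 2$, $s\ge 1$ does make the $k=1$ term the unique minimizer of the valuation, so the ultrametric inequality yields $\nu_p(r^n-1)=\nu_p(n)+\nu_p(r-1)$; the $p=2$, $n$ odd case correctly reduces to the oddness of the cofactor $r^{n-1}+\cdots+1$; and in the $p=2$, $n$ even case, the reduction to $\nu_2\bigl(r^{2^a}-1\bigr)$ via the odd-exponent case followed by the telescoping $r^{2^a}-1=(r^2-1)\prod_{j=1}^{a-1}\bigl(r^{2^j}+1\bigr)$, with each factor exactly divisible by $2$ because $r^{2^j}\equiv 1\pmod{8}$ for $j\ge 1$, is exactly where the anomalous $-1$ comes from, and you have it right. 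Compared with the paper's bare citation, your route buys self-containedness at the cost of a page of elementary computation. One pedantic remark: the hypothesis $r\ne -1$ (and, implicitly, $r\ne 1$) serves only to keep all valuations finite, as you observe; under the convention $\nu_p(0)=\infty$ the formula would hold in the degenerate cases as well.
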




A crucial fact about $u_q$ is that its terms have alternating parity.
Indeed, we have the
following trivial observation (note that $q^*\equiv 1\pmod*4$).
\begin{Lem}
\label{trivialparity}
If $q^*\equiv 1\pmod*8,$ the terms of $u_q(1),u_q(2),\ldots$ alternate between
odd and even. If $q^*\equiv 5\pmod*8,$ it is the other way around.
\end{Lem}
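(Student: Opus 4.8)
The plan is to determine the parity of $u_q(j)$ by computing the numerator $3^j-q^*(-1)^j$ modulo $8$. Since $u_q(j)=(3^j-q^*(-1)^j)/4$ is an integer, it is even precisely when $3^j-q^*(-1)^j\equiv 0\pmod*8$ and odd precisely when $3^j-q^*(-1)^j\equiv 4\pmod*8$ (these being the only two residues modulo $8$ that are divisible by $4$). Thus the entire statement reduces to a single residue computation modulo $8$, the point being that one must work modulo $8$ rather than modulo $2$ because of the division by $4$.

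First I would record that $3^2=9\equiv 1\pmod*8$, so $3^j\equiv 3\pmod*8$ when $j$ is odd and $3^j\equiv 1\pmod*8$ when $j$ is even. Next, as already noted before the statement, $q^*\equiv 1\pmod*4$, so $q^*$ lies in exactly one of the residue classes $1$ or $5$ modulo $8$; this is precisely what makes the two cases of the lemma exhaustive.

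It then remains to treat the two cases. If $q^*\equiv 1\pmod*8$, then for $j$ odd the numerator is $\equiv 3-1\cdot(-1)=4\pmod*8$, so $u_q(j)$ is odd, while for $j$ even it is $\equiv 1-1=0\pmod*8$, so $u_q(j)$ is even; hence the terms alternate odd, even, odd, even, and so on. If instead $q^*\equiv 5\pmod*8$, then for $j$ odd the numerator is $\equiv 3+5=8\equiv 0\pmod*8$, so $u_q(j)$ is even, whereas for $j$ even it is $\equiv 1-5\equiv 4\pmod*8$, so $u_q(j)$ is odd; the alternation is thus reversed, exactly as asserted.

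There is no genuine obstacle to surmount here, the lemma being essentially a one-line congruence check; the only points worth flagging are the two just emphasized, namely that the correct modulus is $8$ and that the congruence $q^*\equiv 1\pmod*4$ guarantees the dichotomy between the two cases is complete.
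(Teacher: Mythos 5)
Your proof is correct, and it is precisely the routine verification behind what the paper states without any proof as a ``trivial observation'' (offering only the hint that $q^*\equiv 1\pmod*{4}$, which is exactly the dichotomy you use to split into the residues $1$ and $5$ modulo $8$). Nothing is missing: working modulo $8$ because of the division by $4$, and checking $3^j\equiv 3$ or $1\pmod*{8}$ according to the parity of $j$, is the whole content of the lemma.
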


Armed with these two lemmas we are ready to establish the following result.

\begin{Lem}
\label{prop2n}
Let $n \geq 2$ be an integer with $n\le 2^m$.  Then $u_q(1),\dots,u_q(n)$ are pairwise distinct modulo $2^m$.
\end{Lem}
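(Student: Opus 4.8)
The plan is to split the comparison $u_q(i)\equiv u_q(j)\pmod{2^m}$ according to whether the indices $i,j$ have the same or opposite parity, exactly along the lines indicated in Section \ref{interlacing}.

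First, for indices of opposite parity the conclusion is immediate. By Lemma \ref{trivialparity} the terms $u_q(1),u_q(2),\ldots$ strictly alternate between odd and even, so if $i\not\equiv j\pmod 2$ then $u_q(i)$ and $u_q(j)$ have opposite parity. Since $n\ge 2$ forces $2^m\ge 2$ and hence $m\ge 1$, the modulus $2^m$ is even, and two integers of opposite parity can never be congruent modulo an even number. Thus no collision occurs between opposite-parity indices, and this case requires no computation.

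Second, for indices of equal parity I would apply Lemma \ref{oneventje} with modulus $2^m$ (legitimate, since $3\nmid 2^m$) and $\alpha=1$. This reduces the claim to the single inequality $\ord_{2^{m+2}}(9)>(n-1)/2$, the factor $4$ occurring in $4m$ in Lemma \ref{oneventje} turning the modulus $2^m$ into $2^{m+2}$. To evaluate this order I would invoke Lemma \ref{Beyl} with $p=2$ and $r=9$: since $9\equiv 1\pmod 2$ and $9^2-1=80=2^4\cdot 5$, the two cases of the lemma combine into the single formula $\nu_2(9^k-1)=\nu_2(k)+3$ valid for every $k\ge 1$. Consequently $9^k\equiv 1\pmod{2^{m+2}}$ holds precisely when $\nu_2(k)\ge m-1$, and therefore $\ord_{2^{m+2}}(9)=2^{m-1}$.

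It then remains to verify that $2^{m-1}>(n-1)/2$, and this is exactly where the hypothesis $n\le 2^m$ enters: it yields $(n-1)/2\le(2^m-1)/2<2^{m-1}$, as needed. The argument is thus entirely elementary, and the only points demanding care are the bookkeeping of the extra factor $4$ (so that the relevant order is taken modulo $2^{m+2}$ rather than $2^m$) and the combination of the two cases of Lemma \ref{Beyl} into a uniform valuation formula; once these are in place, no genuine obstacle remains.
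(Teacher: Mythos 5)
Your proof is correct and follows essentially the same route as the paper's: the parity alternation of Lemma \ref{trivialparity} disposes of opposite-parity indices, and Lemma \ref{Beyl} applied to $9$ modulo $2^{m+2}$ settles the equal-parity ones. The only cosmetic difference is that you package the equal-parity case through Lemma \ref{oneventje} and the exact order $\ord_{2^{m+2}}(9)=2^{m-1}$, which handles all $n\ge 2$ uniformly, whereas the paper argues by contradiction on a supposed collision and treats $n=2$ separately.
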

\begin{proof} For $n=2$ the result is obvious by 
Lemma \ref{trivialparity}. So assume that $n\ge 3$.
Since by 
Lemma \ref{trivialparity} the terms of the sequence alternate in parity, it suffices to compare the remainders $\pmod*{2^m}$ of
the terms having an index with the same parity. Thus assume that we have
$$u_q(2j+\alpha) \equiv u_q(2k+\alpha)\pmod*{2^m}{\rm ~with~}1\le 2j+\alpha<2k+\alpha \leq n,~\alpha\in \{1,2\}.$$ It follows from this that $9^{k-j}\equiv 1 \pmod*{2^{m+2}}$.
We have $\nu_2(9^{k-j}-1)=\nu_2(k-j)+3$ by Lemma \ref{Beyl}. Further, 
$2k-2j \leq n-1<2^m$, so $\nu_2(k-j) \leq m-2$ (here we used that $n\ge 3$).
Therefore  $\nu_2(9^{k-j}-1)=\nu_2(k-j)+3 \leq (m-2) +3 = m+1$, which implies that 
$9^{k-j}\not \equiv 1 \pmod*{2^{m+2}}$. Contradiction.\end{proof}

On noting that trivially $D_q(n)\ge n$ and that,  
for $n\ge 2,$ the interval $[n,2n-1]$ always contains some power of $2$, we
obtain the following corollary to Lemma \ref{prop2n}.
\begin{cor}
\label{bertie}
We have $n\leq D_q(n)\le 2n-1$.
\end{cor}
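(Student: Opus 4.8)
The plan is to deduce both inequalities from elementary considerations, with all the genuine work already contained in Lemma \ref{prop2n}. For the lower bound $D_q(n)\ge n$, I would invoke the pigeonhole principle: the residues of $u_q(1),\ldots,u_q(n)$ modulo any integer $m$ lie in a set of size $m$, so requiring them to be pairwise incongruent forces $m\ge n$. Since $D_q(n)$ is by definition the smallest such $m$, we obtain $D_q(n)\ge n$ directly.

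For the upper bound the key observation is that, for $n\ge 2$, the interval $[n,2n-1]$ always contains a power of $2$. Indeed, let $m$ be the unique integer with $2^{m-1}<n\le 2^m$. Then $2^m\ge n$, while $2^m=2\cdot 2^{m-1}<2n$ yields $2^m\le 2n-1$, so $2^m\in[n,2n-1]$. By Lemma \ref{prop2n} the terms $u_q(1),\ldots,u_q(n)$ are pairwise distinct modulo $2^m$, hence $2^m$ discriminates the first $n$ terms and therefore $D_q(n)\le 2^m\le 2n-1$.

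It remains to treat $n=1$ separately, where the claim reads $1\le D_q(1)\le 1$; this holds because a single term is vacuously pairwise incongruent modulo $1$, so $D_q(1)=1$. Combining the two bounds gives $n\le D_q(n)\le 2n-1$ for every $n\ge 1$.

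I do not expect any real obstacle here: the corollary is an immediate consequence of Lemma \ref{prop2n}, whose proof, establishing that a sufficiently large power of $2$ discriminates, is where all the content resides. The only point demanding a moment's care is the elementary verification that $[n,2n-1]$ meets the powers of $2$, which follows at once from the dyadic spacing argument above.
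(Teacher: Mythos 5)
Your proof is correct and follows exactly the paper's own argument: the lower bound via pigeonhole, the upper bound by noting that for $n\ge 2$ the interval $[n,2n-1]$ contains a power of $2$ which discriminates by Lemma \ref{prop2n}. Your explicit dyadic verification and the separate treatment of $n=1$ only spell out details the paper leaves implicit.
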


\noindent {\tt Commentary}. Lemma \ref{prop2n} is proved by an equal
index parity argument.

\section{Periodicity and discriminators}
\subsection{Generalities}
\label{general}
We say that a sequence of integers $\{v_j\}_{j=1}^{\infty}$ is {\it (eventually) periodic} 
modulo $d$ if there exist integers $n_0\ge 1$ and $k\ge 1$ such that
\begin{equation}
\label{per1}
v_n\equiv v_{n+k}\pmod*d
\end{equation}
for every $n\ge n_0$. 
The minimal choice for $n_0$ is called the {\it pre-period}.
The smallest $k\ge 1$ for which (\ref{per1}) holds
for every $n\ge n_0$ is said to be the {\it period} and denoted by $\rho_v(d)$.
In case we can take $n_0=1$ we
say that the sequence is {\it purely periodic} modulo $d$.

\smallskip

 Let $\{v_j\}_{j=1}^{\infty}$ be a second order linear recurrence with the 
two starting values and the coefficients of the defining equation being integers.
Note that, for a given $d$, there must be a pair $(a,b)$ such that $a\equiv v_n$ and $b\equiv v_{n+1}$ modulo $d$ 
for infinitely many $n$. Since a pair of consecutive terms determines uniquely all the subsequent ones,
it follows that the sequence is periodic modulo $d$. If we consider $n$-tuples instead of pairs modulo $d$, we
see that an $n$th order linear recurrence with the 
$n$ starting values and the coefficients of the defining equation  being integers is always periodic
modulo $d$.\\
\indent If a sequence $v$ is periodic modulo $d_1$ and modulo $d_2$ with
$(d_1,d_2)=1$, then we obviously have
\begin{equation}
\label{per2}
\rho_v(d_1d_2)=\lcm(\rho_v(d_1),\rho_v(d_2)).
\end{equation}
If the sequence is purely periodic modulo $d_1$ and modulo $d_2$ with
$(d_1,d_2)=1$, then it is also purely periodic modulo $d_1d_2$.
Another trivial property of $\rho_v$ is that, if 
the sequence $v$ is periodic modulo $d_2$, then for
every divisor $d_1$ of $d_2$ we have
\begin{equation}
\label{per3}
\rho_v(d_1)|\rho_v(d_2).
\end{equation}
\indent The following result links the period with the discriminator. Its moral is that,
if $\rho_v(d)$ is small enough, we cannot expect $d$ to occur as $D_v$-value, i.e., $d$ does not 
belong to the image of $D_v$.

\begin{Lem}
\label{gee}
Assume that $D_v(n)\le g(n)$ for every $n\ge 1$ with $g$ non-decreasing.
Assume that the sequence $v$ is purely periodic modulo $d$ with period $\rho_v(d)$. If $g(\rho_v(d))<d$, then $d$ is a $D_v$-non-value.
\end{Lem}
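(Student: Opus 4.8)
```latex
\textbf{Proof proposal for Lemma \ref{gee}.}
The plan is to argue by contradiction: suppose that $d$ \emph{is} a $D_v$-value, so that $d=D_v(n)$ for some $n\ge 1$, and then show that the smallness hypothesis $g(\rho_v(d))<d$ forces a contradiction. The key structural fact I would exploit is that $d=D_v(n)$ means precisely that $d$ is the smallest modulus discriminating the first $n$ terms, so in particular $v(1),\ldots,v(n)$ are pairwise incongruent modulo $d$. Since the sequence is purely periodic modulo $d$ with period $\rho_v(d)$, the residues of $v(1),v(2),\ldots$ modulo $d$ cycle with period $\rho_v(d)$; hence among any $\rho_v(d)+1$ consecutive terms there must be two that are congruent modulo $d$. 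Therefore the number $n$ of pairwise incongruent initial terms can be at most $\rho_v(d)$, i.e.\ $n\le \rho_v(d)$.

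From here the argument is a short chain of inequalities. Using the global upper bound $D_v(n)\le g(n)$ together with the fact that $g$ is non-decreasing, I would estimate
$$
d=D_v(n)\le g(n)\le g(\rho_v(d))<d,
$$
where the first inequality is the hypothesis $D_v(n)\le g(n)$, the second uses $n\le\rho_v(d)$ and monotonicity of $g$, and the final strict inequality is the standing assumption $g(\rho_v(d))<d$. This yields $d<d$, a contradiction, so no such $n$ can exist and $d$ is a $D_v$-non-value.

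The only point that requires genuine care—rather than being purely formal—is establishing the pigeonhole bound $n\le\rho_v(d)$, and this is exactly where \emph{pure} periodicity (as opposed to mere eventual periodicity) is essential. Pure periodicity guarantees that the repetition of residues begins already at index $1$, so that $v(1)$ and $v(1+\rho_v(d))$ are congruent modulo $d$; without it, a pre-period could push the first collision past index $n$ and the bound would fail. Once this observation is in place the rest is routine, so I expect no serious obstacle: the lemma is essentially a clean packaging of the pigeonhole principle against the linear upper bound for the discriminator. In the applications to $D_q$ the relevant instance will be $g(n)=2n-1$ from Corollary \ref{bertie}, for which the hypothesis $g(\rho_v(d))<d$ reduces to the transparent condition $\rho_v(d)\le d/2$ alluded to in the strategy section.
```
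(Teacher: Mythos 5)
Your proposal is correct and follows essentially the same route as the paper's own proof: pure periodicity gives $v(1)\equiv v(1+\rho_v(d))\pmod*{d}$, hence $n\le\rho_v(d)$, and then the chain $d=D_v(n)\le g(n)\le g(\rho_v(d))<d$ yields the contradiction. Your additional remarks on why pure (rather than eventual) periodicity is needed, and on the intended application with $g(n)=2n-1$, are accurate but not part of the paper's argument.
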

\begin{proof} Since $v_1\equiv v_{1+\rho_v(d)}\pmod*d$ we must have $\rho_v(d)\ge n$.
Suppose that $d$ is a $D_v$-value, that is, for some $n$ we have $D_v(n)=d$. Then
$d=D_v(n)\le g(n)\le g(\rho_v(d))<d$. Contradiction. \end{proof}

\noindent {\tt Commentary}. This section is taken over verbatim from \cite[Section 4]{MZ}.

\subsection{Periodicity of the Browkin-S\u al\u ajan sequence}
\label{peri}
The purpose of this section is to establish Theorem~\ref{generalperiod}, which gives an explicit formula for the 
period $\rho_q(d)$ and the pre-period for the Browkin-S\u al\u ajan sequence.
Since it
is easy to show that $3\nmid D_q(n)$, it would be actually enough to study 
those integers $d$ with $3\nmid d$ (in which case the 
Browkin-S\u al\u ajan sequence is purely 
periodic modulo $d$). However, for completeness, we discuss the periodicity of
the Browkin-S\u al\u ajan sequence for {\it every} $d$. In the sequel it is helpful
to have in mind the trivial observation 
that, if $3\nmid m$, then 
\begin{equation}
\label{lcmorder}
2\ord_{4m}(9)=\lcm(2,\ord_{4m}(3)).
\end{equation}
\begin{Thm}
\label{generalperiod}
Suppose that $d>1$. Write $d=3^{\alpha}\cdot \delta$ with $(\delta,3)=1$. 
The period $\rho_q(d)$ of the Browkin-S\u al\u ajan sequence modulo $d$ exists
and satisfies
$$
\rho_q(d)=
\begin{cases}
\ord_{4\delta}(9) &\mbox{if }d=q \mbox{ and }2\nmid \ord_q(3);\cr
2\ord_{4\delta}(9) & \mbox{otherwise}.
\end{cases}
$$
The pre-period equals
$\max(1,\alpha)$.
\end{Thm}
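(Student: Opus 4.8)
The plan is to reduce the period of $u_q$ to the combined behaviour of the powers of $3$ and of the alternating sign $(-1)^j$. Since $a\equiv b\pmod*d$ is equivalent to $4a\equiv 4b\pmod*{4d}$, the period of $u_q$ modulo $d$ coincides with the period modulo $4d$ of the integer sequence $w(j):=4u_q(j)=3^j-q^*(-1)^j$, and the two sequences share the same pre-period. Consequently $u_q(j)\equiv u_q(j+k)\pmod*d$ for all $j\ge n_0$ if and only if
\begin{equation*}
3^j(3^k-1)\equiv q^*(-1)^j\big((-1)^k-1\big)\pmod*{4d}\qquad(j\ge n_0).\tag{$\star$}
\end{equation*}
Writing $d=3^{\alpha}\delta$ with $(\delta,3)=1$ and invoking the multiplicativity (\ref{per2}), I would establish the theorem by computing $\rho_q(3^{\alpha})$ and $\rho_q(\delta)$ separately and then setting $\rho_q(d)=\lcm(\rho_q(3^{\alpha}),\rho_q(\delta))$; the pre-period claim follows from the analogous Chinese-remainder statement that the pre-period modulo $d$ is the maximum of the pre-periods modulo the coprime factors.

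For the factor $\delta$ the recurrence is invertible (as $3$ is a unit modulo $\delta$), so the sequence is purely periodic and one may take $n_0=1$ in $(\star)$. I would split according to the parity of $k$. For $k$ even the right-hand side of $(\star)$ vanishes and $3^j$ is a unit, so the condition is exactly $3^k\equiv 1\pmod*{4\delta}$; the smallest even such $k$ is $\lcm(\ord_{4\delta}(3),2)=2\ord_{4\delta}(9)$, the last equality being a one-line check in the two cases $\ord_{4\delta}(3)$ even or odd. For $k$ odd, evaluating $(\star)$ at $j$ and at $j+1$ and eliminating $3^j(3^k-1)$ yields $8q^*\equiv 0\pmod*{4d}$, i.e. $d\mid 2q^*=\pm 2q$; since $3\nmid d$ and $d>1$, this forces $d\in\{2,q,2q\}$. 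Hence for every other $\delta$ no odd period can occur and $\rho_q(\delta)=2\ord_{4\delta}(9)$.

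The three exceptional moduli $d\in\{2,q,2q\}$ I would then dispatch by a direct computation modulo $8$ (and modulo $q$). For $d=q$ one has $q\mid q^*$, so $(\star)$ reduces modulo $q$ to $3^k\equiv 1\pmod*q$, while its component modulo $4$ is automatically satisfied for odd $k$ (using $q^*\equiv 1\pmod*4$); thus an odd period exists precisely when $\ord_q(3)$ is odd, giving $\rho_q(q)=\ord_q(3)=\ord_{4q}(9)$, and otherwise $\rho_q(q)=2\ord_{4q}(9)=\ord_q(3)$. For $d\in\{2,2q\}$ a short check modulo $8$, distinguishing $q^*\equiv 1$ and $q^*\equiv 5\pmod*8$, shows the odd-$k$ congruence fails (the two sides disagree already at even $j$), so the minimal period is again the even value $2\ord_{4\delta}(9)$; for $d=2$ this also follows at once from the alternating parity of Lemma \ref{trivialparity}. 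This reproduces both branches of the theorem, the sole exception being $d=q$ with $\ord_q(3)$ odd.

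For the factor $3^{\alpha}$ with $\alpha\ge 1$, reducing modulo $3^{\alpha}$ shows $4u_q(j)\equiv -q^*(-1)^j\pmod*{3^{\alpha}}$ for $j\ge\alpha$, so the tail is exactly $2$-periodic (the two values are distinct because $3\nmid q^*$), whence $\rho_q(3^{\alpha})=2$. That the pre-period is exactly $\alpha$ follows, for $\alpha\ge 2$, from $4(u_q(\alpha+1)-u_q(\alpha-1))=8\cdot 3^{\alpha-1}\not\equiv 0\pmod*{3^{\alpha}}$, which rules out periodicity starting before index $\alpha$ (for $\alpha=1$ the sequence is already purely periodic modulo $3$). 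Combining, $\rho_q(d)=\lcm(2,\rho_q(\delta))=2\ord_{4\delta}(9)$ whenever a factor $3$ is present, which automatically absorbs the lone odd period and lands in the ``otherwise'' branch, while the pre-period equals $\max(1,\alpha)$. I expect the genuine obstacle to be the odd-period analysis: this is the only place where the alternating term $q^*(-1)^j$ truly interferes with $3^j$, it is where the single exceptional value $d=q$ is born, and it requires care to show that the necessary divisibility $d\mid 2q^*$ is, among $d\in\{2,q,2q\}$, actually realized only for $d=q$ with $\ord_q(3)$ odd.
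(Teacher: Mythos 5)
Your proof is correct, and it reaches the theorem by a genuinely different organization than the paper's. The paper never splits $d$ into its $3$-part and prime-to-$3$ part via the Chinese Remainder Theorem: its Lemma \ref{withtypo} treats all $d$ with $9\nmid d$ in one stroke (the factor $3$ enters only through $\rho_q(3)=2$), showing pure periodicity and that $\rho_q(d)$ equals $2\ord_{4\delta}(9)$ or $\ord_{4\delta}(9)$ according as the period is even or odd; a separate Lemma \ref{parity} then pins down the parity by recasting the shift condition as $(1-3^{\rho_q(d)})/2\equiv q^*(-3)^{-n}\pmod*{2d}$ and splitting on whether $q\mid d$: in the coprime case the left side is independent of $n$, forcing $(-3)^n\equiv 1\pmod*{2d}$ and hence $d\le 2$, while $q\mid d$ leads to $d\in\{q,2q\}$, with $2q$ excluded via $\rho_q(2)=2$ and \eqref{per3}. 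Your route instead invokes \eqref{per2} together with its pre-period analogue, and replaces the independence-of-$n$ argument by the elimination at consecutive indices $j$ and $j+1$, which converts the existence of an odd period directly into the divisibility $4d\mid 8q^*$; this localizes the exceptional moduli $\{2,q,2q\}$ in one line, after which your mod-$8$ computations dispose of $2$ and $2q$. The trade-off: the paper's parity lemma is a free-standing statement that gets reused later (e.g.\ in the proof of Lemma \ref{uppierho}), and its formulation avoids any case split on $\alpha$; your version makes the exceptional case $d=q$, $2\nmid\ord_q(3)$ emerge as the unique survivor of an explicit divisibility constraint, which is arguably more transparent. One point to tighten: the computation $4(u_q(\alpha+1)-u_q(\alpha-1))=8\cdot3^{\alpha-1}$ only excludes the period $2$ starting at index $\alpha-1$; to conclude that the pre-period is exactly $\alpha$ you should add that every eventual period is a multiple of $2$ (the minimal period of the tail) and that $4(u_q(\alpha-1+k)-u_q(\alpha-1))=3^{\alpha-1}(3^k-1)$ has $3$-adic valuation exactly $\alpha-1$ for every even $k$ --- a one-line fix, and in fairness the paper records the corresponding $3^{\alpha}$ statement only as ``an easy observation.''
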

\begin{cor}
\label{9}
The Browkin-S\u al\u ajan sequence is purely periodic
if and only if $9\nmid d$. 
\end{cor}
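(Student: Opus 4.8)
The plan is to read this off directly from the pre-period formula established in Theorem~\ref{generalperiod}, so that no genuinely new work is required. Recall from the generalities in Section~\ref{general} that the sequence is purely periodic modulo $d$ precisely when we may take $n_0=1$ in the periodicity relation~(\ref{per1}); since one always has $n_0\ge 1$, this is equivalent to saying that the minimal admissible $n_0$, i.e.\ the pre-period, equals $1$. Thus the entire content of the corollary is to translate the condition ``pre-period $=1$'' into the divisibility condition $9\nmid d$.

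Writing $d=3^{\alpha}\cdot\delta$ with $(\delta,3)=1$ exactly as in the theorem, Theorem~\ref{generalperiod} asserts that the pre-period is $\max(1,\alpha)$. I would then simply observe that $\max(1,\alpha)=1$ holds if and only if $\alpha\le 1$, that is, if and only if $3^2\nmid d$, which is precisely the statement $9\nmid d$. Both implications follow at once from this single equivalence: if $9\nmid d$ then $\alpha\in\{0,1\}$, forcing the pre-period to be $1$ and hence pure periodicity; conversely, pure periodicity forces the pre-period to equal $1$, so $\max(1,\alpha)=1$, whence $\alpha\le 1$ and therefore $9\nmid d$.

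I expect essentially no obstacle here, since all the substantive effort---proving that the pre-period is exactly $\max(1,\alpha)$, and in particular that the sequence fails to be purely periodic when $9\mid d$ because of the genuine loss coming from the factor $3^{\alpha}$---is already absorbed into Theorem~\ref{generalperiod}. The only point I would state carefully is the identification of ``purely periodic'' with ``pre-period equal to $1$'', which is just the definition recalled in Section~\ref{general}; once that is made explicit, the corollary is a one-line deduction from the theorem.
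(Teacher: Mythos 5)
Your proposal is correct and matches the paper's intent exactly: the paper states Corollary~\ref{9} without a separate argument precisely because it is immediate from the pre-period formula $\max(1,\alpha)$ in Theorem~\ref{generalperiod}, together with the definitional equivalence of pure periodicity and pre-period equal to $1$ (the substantive work being absorbed into the theorem, whose proof rests on Lemma~\ref{withtypo}). Your write-up is just this one-line deduction made explicit, so nothing further is needed.
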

In the proof of the next lemma we will use that $\rho_q(3)=2$.
Since modulo $3^f$ the sequence $u_q$ eventually alternates 
between $q^*/4$ and $-q^*/4,$ it even follows that $\rho_q(3^f)=2$ for
every $f\ge 1$.
\begin{Lem}
\label{withtypo}
Write $d=3^{\alpha}\cdot \delta$ with $(\delta,3)=1$. The 
Browkin-S\u al\u ajan sequence is purely periodic
if and only if $9\nmid d$. Furthermore, if $9\nmid d$, then 
$$
\rho_q(d)=
\begin{cases}
2\ord_{4\delta}(9) & \text{if~}2\mid \rho(d);\cr
\ord_{4\delta}(9) & otherwise.
\end{cases}
$$
\end{Lem}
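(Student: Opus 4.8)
The plan is to establish the two assertions separately, reducing everything through the Chinese Remainder Theorem to the coprime building blocks $3^{\alpha}$ and $\delta$. For pure periodicity I would first note that modulo $\delta$ the recurrence can be inverted, since $u_q(j)=3^{-1}\big(u_q(j+2)-2u_q(j+1)\big)$ and $(\delta,3)=1$; hence the sequence of consecutive pairs $(u_q(j),u_q(j+1))$ is purely periodic modulo $\delta$, and so is $u_q$. Modulo $3^{\alpha}$, on the other hand, the term $3^{j}$ vanishes precisely for $j\ge\alpha$, so the sequence settles onto the $2$-cycle $\pm q^{*}/4$ only from $j=\alpha$ onwards; it is purely periodic if and only if the initial segment already lies on this cycle, that is, if and only if $\alpha\le 1$. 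Combining the two via (\ref{per2}) shows that $u_q$ is purely periodic modulo $d$ exactly when $9\nmid d$, with $\rho_q(3^{\alpha})\in\{1,2\}$ recorded (using $\rho_q(3^{f})=2$).

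For the period when $9\nmid d$ I would compute $\rho_q(\delta)$ and then set $\rho_q(d)=\lcm\!\big(\rho_q(3^{\alpha}),\rho_q(\delta)\big)$. The key reduction is that, for an integer $x$, one has $x\equiv 0\pmod*\delta$ iff $4x\equiv 0\pmod*{4\delta}$; applied to $x=u_q(j+k)-u_q(j)$ this turns shift-by-$k$ periodicity into the single congruence
\[
3^{j}(3^{k}-1)-q^{*}(-1)^{j}\big((-1)^{k}-1\big)\equiv 0\pmod*{4\delta}\qquad(\text{all }j),
\]
where $3$ is a unit modulo $4\delta$ since $(\delta,3)=1$. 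For even $k$ the second term drops out and the condition becomes $3^{k}\equiv 1\pmod*{4\delta}$, i.e.\ $\ord_{4\delta}(3)\mid k$; the smallest even admissible $k$ is thus $\lcm(2,\ord_{4\delta}(3))=2\ord_{4\delta}(9)$ by (\ref{lcmorder}). So the period always divides $2\ord_{4\delta}(9)$ and equals it unless a strictly smaller, necessarily odd, period exists.

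The heart of the matter, and the step I expect to be the main obstacle, is the analysis of odd shifts. For $k$ odd the displayed congruence reads $3^{j}(3^{k}-1)+2q^{*}(-1)^{j}\equiv 0\pmod*{4\delta}$; evaluating at $j=0$ and $j=1$ (legitimate because the sequence extends to a purely periodic sequence on $\mathbb{Z}$) gives $3^{k}-1\equiv -2q^{*}$ and $3(3^{k}-1)\equiv 2q^{*}$ modulo $4\delta$. Eliminating $3^{k}-1$ forces $8q^{*}\equiv 0\pmod*{4\delta}$, hence $\delta\mid 2q$, so $\delta\in\{1,2,q,2q\}$. When $\delta$ is even, reducing $3^{k}\equiv 1-2q^{*}$ modulo $8$ gives $3\equiv 1-2q^{*}\pmod*8$, forcing $q^{*}\equiv 3\pmod*4$ and contradicting $q^{*}\equiv 1\pmod*4$; this eliminates $\delta\in\{2,2q\}$, while $\delta=1$ is the degenerate case $d=3$ with period $2$. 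Thus an odd period can occur only for $\delta=q$, where $3^{k}\equiv 1-2q^{*}$ reduces modulo $q$ to $3^{k}\equiv 1$, so $\ord_q(3)\mid k$; an odd $k$ exists iff $\ord_q(3)$ is odd, and then $k=\ord_q(3)$ satisfies the congruence for every $j$ (using that $3^{j}-(-1)^{j}$ is even) and is minimal, since a smaller odd period would again be a multiple of $\ord_q(3)$ and a smaller even one would exceed $\ord_q(3)$. As $\ord_{4q}(9)=\ord_q(9)=\ord_q(3)$ when $\ord_q(3)$ is odd, this period equals $\ord_{4\delta}(9)$. Finally, assembling $\rho_q(d)=\lcm\!\big(\rho_q(3^{\alpha}),\rho_q(\delta)\big)$ shows that the factor $\rho_q(3)=2$ present whenever $3\mid d$ doubles any odd $\rho_q(\delta)$; hence the odd value $\ord_{4\delta}(9)$ survives only for $d=q$ with $\ord_q(3)$ odd, and $\rho_q(d)=2\ord_{4\delta}(9)$ in every other case, in agreement with Theorem \ref{generalperiod}.
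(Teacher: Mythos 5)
Your proof is correct, but it takes a genuinely different and longer route than the paper's, and in fact proves strictly more. The paper's proof never analyzes odd shifts at all: using $\rho_q(3)=2$ it observes that an even shift $2k$ fixes the whole sequence modulo $d$ from index $1$ on if and only if $9^{k}\equiv 1\pmod*{4\delta}$, so the shift $2\ord_{4\delta}(9)$ yields pure periodicity at once (necessity of $9\nmid d$ is the same explicit mod-$9$ computation you make), and the stated dichotomy then follows from divisibility alone: $\rho_q(d)$ divides the valid shift $2\ord_{4\delta}(9)$; every even valid shift is a multiple of $2\ord_{4\delta}(9)$, so $\rho_q(d)$ even forces equality; and if $\rho_q(d)$ is odd, then $2\rho_q(d)$ is an even valid shift, whence $\ord_{4\delta}(9)\mid\rho_q(d)$ and $\rho_q(d)=\ord_{4\delta}(9)$. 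Because the lemma is phrased conditionally on the parity of $\rho(d)$, the paper never needs to decide which case actually occurs. You, by contrast, decompose $d=3^{\alpha}\delta$, get pure periodicity modulo $\delta$ from invertibility of the recurrence map (a nice structural alternative to the paper's argument), and then completely classify the odd valid shifts by evaluating the congruence at $j=0,1$ and eliminating, concluding that an odd period occurs precisely when $d=q$ and $\ord_q(3)$ is odd. That classification is not part of this lemma: it is exactly the paper's next result, Lemma \ref{parity}, which is proved there by a closely related elimination with a case split on $(q,d)=1$ versus $q\mid d$. So your argument delivers Lemmas \ref{withtypo} and \ref{parity} in one pass, at the cost of front-loading the hardest work; the paper's division of labor keeps this lemma short and defers the parity analysis to where it is actually needed.
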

\begin{proof} Since $u=(3+q^*)/4,{\overline{q^*/4,-q^*/4}}\pmod*9$ 
and $3+q^*\not\equiv -q^*\pmod*9,$
the condition $9\nmid d$ is 
necessary for the Browkin-S\u al\u ajan sequence to be purely periodic modulo $d$. 

We will now show that it is also sufficient.
Using that $\rho_q(3)=2,$ it follows that
$u_q(n)\equiv u_q(n+2k)\pmod*d$ iff $3^n\equiv 3^{n+2k}\pmod*{4\delta}$. 
Since there exists $k$ satisfying $3^{2k}\equiv 1\pmod*{4\delta},$ it follows
that the Browkin-S\u al\u ajan sequence is purely periodic modulo $d$. 
Moreover, we have  $\rho_q(d)\,|2\ord_{4\delta}(9)$ 
with equality if $\rho_q(d)$ is even and $\rho_q(d)=\ord_{4\delta}(9)$ 
otherwise.\end{proof}
\begin{cor}
\label{lcmlcm}
We have $2\ord_{4\delta}(9)=\lcm(2,\rho_q(d))$.
\end{cor}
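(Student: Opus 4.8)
The plan is to read this off directly from Lemma \ref{withtypo} by a case split on the parity of $\rho_q(d)$, using nothing beyond the elementary fact that $\lcm(2,m)=m$ when $m$ is even and $\lcm(2,m)=2m$ when $m$ is odd. So the whole corollary is essentially a repackaging of the parity dichotomy already established for the period.

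First I would dispose of the main case $9\nmid d$, where Lemma \ref{withtypo} applies verbatim. If $\rho_q(d)$ is even, the lemma gives $\rho_q(d)=2\ord_{4\delta}(9)$; since $\rho_q(d)$ is even this yields $\lcm(2,\rho_q(d))=\rho_q(d)=2\ord_{4\delta}(9)$. If instead $\rho_q(d)$ is odd, the lemma gives $\rho_q(d)=\ord_{4\delta}(9)$, and since $\rho_q(d)$ is odd we get $\lcm(2,\rho_q(d))=2\rho_q(d)=2\ord_{4\delta}(9)$. In either case the claimed identity $\lcm(2,\rho_q(d))=2\ord_{4\delta}(9)$ holds.

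To obtain the statement for every $d>1$, I would then cover the remaining case $9\mid d$ (where the sequence is only eventually periodic) by invoking Theorem \ref{generalperiod} instead. There the exceptional branch requires $d=q$, which forces $9\nmid d$ and hence cannot occur when $9\mid d$; so $\rho_q(d)=2\ord_{4\delta}(9)$, an even number, and again $\lcm(2,\rho_q(d))=\rho_q(d)=2\ord_{4\delta}(9)$.

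I do not expect a genuine obstacle here, as the content is entirely absorbed into the earlier parity analysis. The single point meriting a word of care is the internal consistency of the odd-period branch: when the lemma asserts $\rho_q(d)=\ord_{4\delta}(9)$ in the "otherwise" case, one wants $\ord_{4\delta}(9)$ to be genuinely odd. This is automatic, because that branch is \emph{defined} by $\rho_q(d)$ being odd, so the equality itself forces $\ord_{4\delta}(9)$ to be odd and no separate verification of its parity is required.
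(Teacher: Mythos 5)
Your proof is correct and is essentially the paper's own (implicit) argument: the corollary is stated without proof precisely because it is the parity dichotomy of Lemma \ref{withtypo} rewritten via $\lcm(2,m)=m$ for even $m$ and $\lcm(2,m)=2m$ for odd $m$. Your extra step covering $9\mid d$ via Theorem \ref{generalperiod} is a harmless (and logically sound, since that theorem's proof does not use the corollary) bit of thoroughness beyond what the paper records.
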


\noindent We next determine the parity of $\rho_q(d)$.
\begin{Lem}
\label{parity}
Suppose that $9\nmid d$ and $d>1$. We have that
$2\nmid \rho_q(d)$ if and only if $d=q$ and $2\nmid \ord_q(3)$.
\end{Lem}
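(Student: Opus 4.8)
The plan is to work directly from the defining formula. Since $9\nmid d$, the sequence is purely periodic modulo $d$ by Corollary~\ref{9}, so if $k=\rho_q(d)$ is odd then $u_q(n+k)\equiv u_q(n)\pmod* d$ for \emph{every} $n\ge 1$. Multiplying by $4$ and using $4u_q(j)=3^j-q^*(-1)^j$ together with $(-1)^{n+k}=-(-1)^n$ for odd $k$, this is equivalent to the single family of congruences
\begin{equation*}
4d\mid 3^n(3^k-1)+2q^*(-1)^n\qquad(n\ge 1).
\end{equation*}
I would prove both implications of the lemma by extracting arithmetic information from this family.

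For the easy backward direction, suppose $d=q$ and $\ord_q(3)$ is odd, and take $k=\ord_q(3)$. Reducing the displayed quantity modulo $q$ annihilates both terms, since $3^k\equiv 1\pmod* q$ and $q^*\equiv 0\pmod* q$; as $\gcd(4,q)=1$ this shows that $k$ is a period, whence $\rho_q(q)\mid\ord_q(3)$. A divisor of an odd number is odd, so $\rho_q(q)$ is odd. (Comparing with Lemma~\ref{withtypo} one in fact reads off $\rho_q(q)=\ord_q(3)$.)

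The forward direction carries the real content. Assuming $k=\rho_q(d)$ odd, I would first add the congruences for $n$ and $n+1$: the terms $\pm 2q^*(-1)^n$ cancel and what remains is $3^n(3^k-1)(1+3)$, giving $d\mid 3^n(3^k-1)$ for all $n\ge 1$. Writing $d=3^\alpha\delta$ with $(\delta,3)=1$, the case $n=1$ forces $\alpha\le 1$ (already guaranteed by $9\nmid d$) and $\delta\mid 3^k-1$, so in particular $\ord_\delta(3)\mid k$ is odd. Reducing the original congruence modulo $\delta$ and using $\delta\mid 3^k-1$ then leaves $\delta\mid 2q^*$, that is $\delta\mid 2q$; reducing it modulo $3$ in the case $\alpha=1$ leaves $3\mid 2q^*$, impossible for a prime $q\ge 5$, so $\alpha=0$. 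Hence $d=\delta\in\{2,q,2q\}$. I would then discard $d=2$ and $d=2q$: the sequence alternates in parity by Lemma~\ref{trivialparity}, so $\rho_q(2)=2$ is even, and consequently $\rho_q(2q)=\lcm(\rho_q(2),\rho_q(q))$ is even by~(\ref{per2}). This leaves $d=q$, and the relation $\ord_\delta(3)\mid k$ becomes $\ord_q(3)\mid k$, forcing $\ord_q(3)$ to be odd, as required.

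The delicate step is this forward direction: the congruence mixes the geometric term $3^n(3^k-1)$ with the oscillating term $2q^*(-1)^n$, and the trick of combining two consecutive indices to decouple them is exactly what makes $\delta\mid 2q$ and $\alpha=0$ drop out cleanly. Everything else is bookkeeping, but one must take care that it is the pure periodicity (Corollary~\ref{9}) that licenses using the congruences for all $n\ge 1$, including the small values of $n$ needed to control the $3$-part of $d$.
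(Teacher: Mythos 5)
Your proof is correct, and while it rests on the same underlying observation as the paper's --- an odd period $k$ forces $u_q(n)\equiv u_q(n+k)\pmod*{d}$ for \emph{all} $n$ with the sign of the $q^*$-term flipped, which is restrictive enough to force $d\in\{2,q,2q\}$, after which $\rho_q(2)=2$ and (\ref{per2}) eliminate $2$ and $2q$ --- your execution is genuinely different. The paper isolates the oscillating term, rewriting the periodicity condition as $(1-3^{\rho_q(d)})/2\equiv q^*(-3)^{-n}\pmod*{2d}$, notes that the left side is independent of $n$ so that $(-3)^n$ must be constant modulo $2d$, and then runs a case split on $(q,d)=1$ versus $q\mid d$; the $3$-part of $d$ is disposed of at the outset via $\rho_q(3)=2$ and (\ref{per3}). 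You instead decouple the geometric and oscillating terms by adding the congruences at consecutive indices $n$ and $n+1$, obtaining $d\mid 3^n(3^k-1)$, hence $\delta\mid 3^k-1$, and then substituting back to get $\delta\mid 2q$; the $3$-part is excluded by reduction modulo $3$. This treats all $d$ uniformly, with no case split on whether $q\mid d$, which is arguably cleaner. What the paper's route buys in exchange is the exact equality $\rho_q(q)=\ord_q(3)$ (read off from $u_q(j)\equiv 3^j/4\pmod*{q}$), which is subsequently cited in the proof of Theorem \ref{generalperiod}; your backward direction establishes only the divisibility $\rho_q(q)\mid\ord_q(3)$, which suffices for the lemma as stated but carries slightly less information.
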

\begin{proof}
Suppose that $d$ satisfies the conditions of the lemma and 
$\rho_q(d)$ is odd. Then $d>2$.
Since $\rho_q(3)=2$ it follows that $(d,3)=1$.
We have
\begin{equation}
\label{per4}
u_q(n)\equiv u_q(n+\rho_q(d))\pmod*d
\end{equation}
iff
\begin{equation}
\label{per5}
(1-3^{\rho_q(d)})/2\equiv q^*(-3)^{-n}\pmod*{2d}.
\end{equation}
{\sc{Case 1.}} $(q,d)=1$. If (\ref{per5}) is to hold for every $n\ge 1$, then
$(-3)^n$ assumes only one value as $n$ ranges 
over the positive integers. Since $(-3)^{\varphi(2d)}\equiv 1\pmod*{2d}$ we must have
$(-3)^n\equiv 1\pmod*{2d}$ for every $n\ge 1$. 
This has no solution with $d>2$.\\
{\sc{Case 2.}} $q|d$. If the left-hand side of (\ref{per5}) is not divisible by $q$, then (\ref{per5})
has no solution and $\rho_q(d)$ must be even. So assume that the
left-hand side is divisible by $q$.
Then 
\begin{equation}
(1-3^{\rho_q(d)})/(2q)\equiv (-1)^{(q-1)/2}(-3)^{-n}\pmod*{2d/q}.
\end{equation}
The only possible solutions here are $d=q$ and $d=2q$. Since $\rho_q(2)=2$ 
we are left with $d=q$. 
Since $u_q(j)\equiv 3^j/4\pmod*q$ we infer that $\rho_q(q)=\ord_q(3)$,
which is odd iff $\ord_q(3)$ is odd.\end{proof}

\begin{proof}[Proof of Theorem \ref{generalperiod}] It is an easy observation that
modulo $3^{\alpha}$ the Browkin-S\u al\u ajan sequence has pre-period $\max(\alpha,1)$ 
and period two. {}From (\ref{generalperiod}) we infer 
that $\rho_q(q)=\ord_q(3)=\ord_{4q}(9)$ if $\rho_q(q)$ is odd.
This then, in combination with Lemmas \ref{withtypo} and \ref{parity}, 
completes the proof. \end{proof}

\noindent {\tt Commentary}. Since now $\rho_q(d)$ can be odd, various complications arise and 
the results become a bit more difficult to formulate. The proofs
proceed, however, in the same way as before. Lemmas \ref{withtypo} and
\ref{parity} taken together cover the same material as
Lemmas 6 and 7 of \cite{MZ}; however, we think we improved the presentation. 
In Lemma \ref{withtypo} we determine the order in case it is even. In Lemma \ref{parity}
we determine all cases where the order is odd. This is more logically structured than
in \cite{MZ}. In the earlier version, Theorem 2 and Lemma 7 cover practically the same
ground; this is avoided in the new version.
Our new approach also avoids having to make the case distinction between
$\alpha=0$ and $\alpha=1$.\\
\indent We would also like to point out that, instead of $\ord_{\delta}(9)$ in \cite[Lemma 6]{MZ}, one
should read $\ord_{4\delta}(9)$ (but that is also clear from the proof given in \cite{MZ}).

\subsection{Comparison of $\rho(d)$ with $d$}
For notational convenience, we will from now on use $\rho(d)$ instead of $\rho_q(d),$ unless the dependence on $q$ is necessary to be pointed out.
\begin{Lem}
\label{previous}
We have $\rho(2^e)=2^e$ and $\rho(3^e)=2$. If $p$ is odd, then $\rho(p^e)|\varphi(p^e)$. 
\end{Lem}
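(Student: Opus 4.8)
The plan is to establish the three claims in Lemma~\ref{previous} separately, relying on Theorem~\ref{generalperiod} to reduce everything to computing orders of $9$ modulo small powers of primes.

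First I would handle $\rho(3^e)=2$. This is immediate from the remarks preceding Lemma~\ref{withtypo}: modulo $3^f$ the sequence $u_q$ eventually alternates between $q^*/4$ and $-q^*/4$, and since $q^*/4\not\equiv -q^*/4\pmod*{3^f}$ (as $3\nmid q^*$), the period is exactly $2$. Alternatively, applying Theorem~\ref{generalperiod} with $d=3^e$ gives $\delta=1$, so $\rho(3^e)=2\ord_4(9)=2$, the case $d=q$ being irrelevant since $q\neq 3$.

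Next I would treat $\rho(2^e)=2^e$. Here $3\nmid d$, so by Theorem~\ref{generalperiod} (or Lemma~\ref{withtypo}) we have $\rho(2^e)=2\ord_{2^{e+2}}(9)$, the odd-period exception not applying since $q\geq 5$ is odd. It then remains to show $\ord_{2^{e+2}}(9)=2^{e-1}$. The clean way is to invoke Lemma~\ref{Beyl}: I want the smallest $k$ with $9^k\equiv 1\pmod*{2^{e+2}}$, i.e. $\nu_2(9^k-1)\geq e+2$. For even $k$ Lemma~\ref{Beyl} gives $\nu_2(9^k-1)=\nu_2(k)+\nu_2(9^2-1)-1=\nu_2(k)+\nu_2(80)-1=\nu_2(k)+3$, while for odd $k>0$ one checks directly that $\nu_2(9^k-1)=\nu_2(8)=3$. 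Requiring $\nu_2(k)+3\geq e+2$ forces $\nu_2(k)\geq e-1$, so the minimal such $k$ is $2^{e-1}$, giving $\ord_{2^{e+2}}(9)=2^{e-1}$ and hence $\rho(2^e)=2^e$ as claimed (for $e\geq 1$; the case $e=1$ should be checked by hand since the alternating-parity statement Lemma~\ref{trivialparity} already gives $\rho(2)=2$).

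Finally, for $p$ odd (and, in the relevant range, $p\neq 3$), I would show $\rho(p^e)\mid\varphi(p^e)$. Again $3\nmid p^e$, so Theorem~\ref{generalperiod} yields $\rho(p^e)\mid 2\ord_{4p^e}(9)$, with the factor $2$ absorbed in the case analysis. Since $(4,p^e)=1$, the Chinese Remainder Theorem gives $\ord_{4p^e}(9)=\lcm(\ord_4(9),\ord_{p^e}(9))=\ord_{p^e}(9)$ because $\ord_4(9)=1$. Now $\ord_{p^e}(9)\mid\varphi(p^e)$ by Lagrange/Euler, and $\varphi(p^e)=p^{e-1}(p-1)$ is even for $p$ odd, so $2\ord_{p^e}(9)\mid\varphi(p^e)$ precisely when $\ord_{p^e}(9)\mid\varphi(p^e)/2$; since $9$ is a square its order modulo $p^e$ indeed divides $\varphi(p^e)/2$, so in all cases $\rho(p^e)\mid\varphi(p^e)$. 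The only subtlety to watch is the odd-period exceptional case $d=q$ in Theorem~\ref{generalperiod}, where $\rho(q)=\ord_q(3)$; but $\ord_q(3)\mid\varphi(q)=q-1$ as well, so the divisibility persists. I expect this last bookkeeping—keeping the factor of $2$ and the $d=q$ exception consistent with the divisibility by $\varphi(p^e)$—to be the only mild obstacle; the order computations themselves are routine consequences of Lemma~\ref{Beyl} and elementary group theory.
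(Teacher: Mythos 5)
Your proposal is correct and follows essentially the same route as the paper's proof: Lemma~\ref{Beyl} gives $\ord_{2^{e+2}}(9)=2^{e-1}$ and hence $\rho(2^e)=2^e$ via Theorem~\ref{generalperiod}, the eventual alternation between $\pm q^*/4$ gives $\rho(3^e)=2$, and for odd $p>3$ one combines $\rho(p^e)\mid 2\ord_{p^e}(9)$ with the fact that a square has order dividing $\varphi(p^e)/2$. The extra details you supply (the CRT reduction $\ord_{4p^e}(9)=\ord_{p^e}(9)$, the explicit treatment of the odd-period case $d=q$, and the $p=3$ case, which is immediate from $\rho(3^e)=2\mid\varphi(3^e)$) are all correct and simply make explicit what the paper leaves implicit.
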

\begin{proof} From Lemma \ref{Beyl} it follows 
that $\ord_{2^{e+2}}(9)=2^{e-1}$ and hence, by Theorem \ref{generalperiod}, $\rho(2^e)=2^e$. 
For $n$ large enough, modulo $3^e$ the sequence alternates between $-q^*/4$ and 
$q^*/4$ modulo $3^e$.
Since these are different residue classes, we have $\rho(3^e)=2$.\\
\indent It remains to prove the final claim. 
If $p=3$ it is clearly true and thus we may assume that $p>3$. 
Note that $\ord_{4p^e}(9)=\ord_{p^e}(9)$ and thus it follows from Lemma \ref{withtypo}
that
$\rho(p^e)|2\ord_{p^e}(9)\,|\,2(\varphi(p^e)/2)$. \end{proof}

This lemma together with Theorem \ref{generalperiod} and 
(\ref{per2}) yields the following result.
\begin{Lem}
\label{previous1}
We have $\rho(d)\le \lcm(2,\rho(d))\le d$.
\end{Lem}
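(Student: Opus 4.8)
The plan is to prove $\rho(d)\le \lcm(2,\rho(d))\le d$ by reducing the general case to the prime-power case via multiplicativity, and there handling the three types of primes separately. The first inequality, $\rho(d)\le \lcm(2,\rho(d))$, is immediate since any positive integer divides its least common multiple with $2$. For the second inequality, I would first reduce to prime powers: by Theorem \ref{generalperiod} the period is governed by $\ord_{4\delta}(9)$, and by the multiplicative property (\ref{per2}) we have $\rho(d_1d_2)=\lcm(\rho(d_1),\rho(d_2))$ for coprime $d_1,d_2$, so it suffices to establish the bound on each prime-power factor and then combine.

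For the prime-power estimates I would invoke Lemma \ref{previous} directly. Writing $d=\prod_i p_i^{e_i}$, Corollary \ref{lcmlcm} gives $\lcm(2,\rho(d))=2\ord_{4\delta}(9)$, and multiplicativity reduces the problem to bounding $\lcm(2,\rho(p^e))$ against $p^e$ for each prime power dividing $d$. For $p=2$, Lemma \ref{previous} gives $\rho(2^e)=2^e$, which is already even, so $\lcm(2,\rho(2^e))=2^e\le 2^e$. For $p=3$, we have $\rho(3^e)=2$, so $\lcm(2,\rho(3^e))=2\le 3^e$ for every $e\ge 1$. For an odd prime $p>3$, Lemma \ref{previous} yields $\rho(p^e)\mid \varphi(p^e)=p^{e-1}(p-1)$, hence $\rho(p^e)\le p^{e-1}(p-1)=p^e-p^{e-1}<p^e$; then $\lcm(2,\rho(p^e))\le 2\rho(p^e)$, and I would check that $2\rho(p^e)\le p^e$ using $\rho(p^e)\le \varphi(p^e)/1$ together with the fact (from the strategy section) that the relevant primes satisfy $\ord_p(9)=(p-1)/2$, giving $\rho(p^e)\mid \varphi(p^e)/2$ in the extremal case.

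The delicate point is that the naive bound $\lcm(2,\rho(p^e))\le 2\rho(p^e)\le 2\varphi(p^e)$ can exceed $p^e$ when $p$ is small, so one cannot simply multiply the factor-wise bounds $2\rho(p_i^{e_i})$ and compare with $\prod p_i^{e_i}$. The clean route is instead to pass through $\ord_{4\delta}(9)$: since $9$ is a square, its order modulo $4\delta$ divides $\varphi(4\delta)/2$ when $\delta$ is odd (a square has order at most $\varphi(p^e)/2$ modulo each odd prime power, as noted in the strategy section), and $\varphi(4\delta)=2\varphi(\delta)$ for odd $\delta>1$. Thus $\lcm(2,\rho(d))=2\ord_{4\delta}(9)$ is controlled by $\varphi(\delta)\le \delta\le d$, and the single factor of $2$ is absorbed. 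I expect the main obstacle to be precisely the bookkeeping that shows the extra factor of $2$ arising from $\lcm(2,\cdot)$ does not push the bound past $d$; this is resolved by noting that $4\delta$ contributes a factor of $2$ to $\varphi$, which exactly cancels the halving forced by $9$ being a perfect square, so that $2\ord_{4\delta}(9)\le \varphi(4\delta)/1 \cdot \tfrac12\cdot 2 \le \delta\le d$ after careful tracking of the small cases $\delta=1$ and $\delta=2$.
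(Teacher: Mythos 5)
Your first inequality and the idea of reducing to prime powers via (\ref{per2}) are sound, but the proposal breaks down at the two places where the factor of $2$ has to be absorbed. The prime-power estimate at the heart of your second paragraph is false: you claim $2\rho(p^e)\le p^e$ for $p>3$, on the grounds that $\rho(p^e)\mid\varphi(p^e)/2$ ``in the extremal case'' $\ord_p(9)=(p-1)/2$. This is exactly backwards. In that extremal case Theorem \ref{generalperiod} gives $\rho(p^e)=2\ord_{p^e}(9)$, which can be as large as $\varphi(p^e)$; already $\rho(7)=2\ord_{28}(9)=6=\varphi(7)$, so $2\rho(7)=12>7$. Thus $2\rho(p^e)\le p^e$ fails whenever $\rho(p^e)=\varphi(p^e)$, i.e.\ precisely for the primes in $\mathcal{P}$, which are the ones that matter most. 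What is true --- and what the paper's citation of Lemma \ref{previous} is designed to deliver --- is a divisibility rather than a doubling bound: for odd $p$ one has $\rho(p^e)\mid\varphi(p^e)$ and $\varphi(p^e)$ is \emph{even}, hence $\lcm(2,\rho(p^e))\mid\varphi(p^e)<p^e$; the $\lcm$ with $2$ costs nothing. With this the proof is short: write $d=\prod_i p_i^{e_i}$ with $d>1$; by (\ref{per2}), $\lcm(2,\rho(d))=\lcm_i\bigl(\lcm(2,\rho(p_i^{e_i}))\bigr)$; each factor is at most $p_i^{e_i}$ (it equals $2^{e_i}$ for $p_i=2$, equals $2$ for $p_i=3$, and divides $\varphi(p_i^{e_i})$ for $p_i>3$, all by Lemma \ref{previous}); and an $\lcm$ never exceeds the product of its arguments. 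This is the argument the paper intends with ``Lemma \ref{previous} together with Theorem \ref{generalperiod} and (\ref{per2})''.

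Your fallback ``clean route'' through Corollary \ref{lcmlcm} does not close this hole, for two reasons. First, it is restricted to odd $\delta$: but $\delta$, the part of $d$ prime to $3$, can be an arbitrary even number coprime to $3$ (e.g.\ $d=40$ or $d=56$), not just $\delta\in\{1,2\}$, and for such moduli your argument says nothing. It can be repaired, but this genuinely requires the $2$-adic information from Lemma \ref{Beyl}: for $\delta=2^a\mu$ with $\mu>1$ odd and $a\ge 1$ one has $\ord_{2^{a+2}}(9)=2^{a-1}$, whence $2\ord_{4\delta}(9)\le 2^{a}\ord_\mu(9)\le 2^{a-1}\varphi(\mu)<\delta$. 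Second, your closing chain ``$2\ord_{4\delta}(9)\le \varphi(4\delta)\cdot\tfrac12\cdot 2\le\delta$'' is false as written: $\varphi(4\delta)=2\varphi(\delta)$ for odd $\delta$, and $2\varphi(\delta)\le\delta$ already fails for $\delta=5$. The inequality you actually need, valid for odd $\delta>1$ because $9$ is a square modulo every odd prime power, is $\ord_{4\delta}(9)=\ord_{\delta}(9)\le\varphi(\delta)/2$, which gives $2\ord_{4\delta}(9)\le\varphi(\delta)\le\delta\le d$. So the lemma is true and your second route can be fixed, but as written your proof covers only odd $d$ and $d$ of the form $2\cdot 3^{\alpha}$.
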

The sharper bound $\rho(m)\le m/2$ holds in case $m$ is not a prime power.
\begin{Lem}
\label{uppierho}
Suppose that $d_1,d_2>1$ and $(d_1,d_2)=1$. Then
$$\rho(d_1d_2)\le d_1d_2/2.$$
\end{Lem}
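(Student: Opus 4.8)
The plan is to combine the multiplicativity of the period with the bound $\rho(d)\le d$ and a careful accounting of parities. Since $(d_1,d_2)=1$ and the Browkin-S\u al\u ajan sequence is periodic modulo every modulus, relation (\ref{per2}) gives
$$\rho(d_1d_2)=\lcm(\rho(d_1),\rho(d_2))=\frac{\rho(d_1)\rho(d_2)}{\gcd(\rho(d_1),\rho(d_2))}.$$
By Lemma \ref{previous1} we have $\rho(d_i)\le d_i$, so it suffices to extract a single factor $2$ from the right-hand side. The natural source of such a factor is the gcd of the two periods: if both $\rho(d_1)$ and $\rho(d_2)$ are even, then $\gcd(\rho(d_1),\rho(d_2))\ge 2$, and we immediately obtain $\rho(d_1d_2)\le \rho(d_1)\rho(d_2)/2\le d_1d_2/2$.

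First I would dispose of this generic case. By Theorem \ref{generalperiod} (equivalently by Lemma \ref{parity}), the period $\rho(d)$ of a modulus $d>1$ is odd only in the single exceptional situation $d=q$ with $\ord_q(3)$ odd; in every other case $\rho(d)=2\ord_{4\delta}(9)$ is even. Since $d_1$ and $d_2$ are coprime and both exceed $1$, at most one of them can equal $q$. Hence at most one of the two periods is odd, and if neither is odd the argument above finishes the proof.

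It remains to treat the case in which exactly one period is odd, say $\rho(d_1)$, which forces $d_1=q$ and $\ord_q(3)$ odd. Here the gcd trick fails, so instead I would exploit that an odd order is necessarily small. Indeed, $\rho(q)=\ord_q(3)$ divides $q-1$; being odd, it must divide the odd part of $q-1$, which is at most $(q-1)/2$. Thus $\rho(d_1)=\rho(q)\le (q-1)/2<d_1/2$. Combining this with $\rho(d_2)\le d_2$ and the trivial inequality $\lcm(a,b)\le ab$ yields
$$\rho(d_1d_2)\le \rho(d_1)\rho(d_2)<\frac{d_1}{2}\cdot d_2=\frac{d_1d_2}{2},$$
as desired.

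The main obstacle is precisely this last, odd-period case: the clean gcd argument produces the required factor $2$ only when both periods are even, and one must notice that the lone modulus that can carry an odd period, namely $q$ itself, compensates by having a period bounded by the odd part of $q-1$, hence strictly below $d_1/2$. Everything else is routine once (\ref{per2}) and Lemma \ref{previous1} are in hand.
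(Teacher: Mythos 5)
Your proof is correct and follows essentially the same route as the paper's: multiplicativity of the period via (\ref{per2}), the factor $2$ from the gcd when both periods are even together with Lemma \ref{previous1}, and the separate treatment of the lone odd-period case $d_1=q$, where $\rho(q)=\ord_q(3)$ odd dividing the even number $q-1$ forces $\rho(q)\le (q-1)/2$. The paper's proof is word-for-word this argument, merely phrased with "without loss of generality" instead of your explicit remark that coprimality lets at most one $d_i$ equal $q$.
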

\begin{proof} We have $\rho(d_1d_2)=\lcm(\rho(d_1),\rho(d_2))$. 
In case both $\rho(d_1)$ and $\rho(d_2)$ are even it thus follows that
$\rho(d_1d_2)\le \rho(d_1)\rho(d_2)/2$. 
Lemma \ref{previous1} then gives $\rho(d_1d_2)\le d_1 d_2/2$.
By Lemma \ref{parity} it remains, without loss of generality, to deal with the case
where $d_1=q$ and $\rho(q)=\ord_q(3)$ is odd. Since $\varphi(q)$
is even, we see that 
$\rho(q)\le (q-1)/2$. We then infer that
$\rho(qd_2)\le \rho(q)\rho(d_2)\le q \rho(d_2)/2\le qd_2/2$. \end{proof}

\noindent {\tt Commentary}. The fact that the period can be odd requires
some modifications. In Lemma \ref{previous} we use that 
$\rho(p^e)|2\ord_{p^e}(9)$ instead of $\rho_5(p^e)=2\ord_{p^e}(9)$. 
In the proof of Lemma \ref{uppierho} we have to deal with the case $d_1=q$ separately.

\section{Browkin-S\u al\u ajan non-values of $D_q$}
\label{non-values}
Recall that, if $m=D_q(n)$ for some $n\ge 1,$ we call $m$ a 
Browkin-S\u al\u ajan value and
otherwise a Browkin-S\u al\u ajan non-value.
\par Most of the following proofs rely on the simple fact that for certain
sets of integers we have that, if $u_q(1),\dots,u_q(n)$ are in 
$n$ distinct residue classes modulo $m$, then $m\ge 2n$ 
contradicting Corollary \ref{bertie}.

\subsection{$D_q(n)$ is not a multiple of $3$}
\begin{Lem}
\label{notdrie}
We have $3\nmid D_q(n)$.
\end{Lem}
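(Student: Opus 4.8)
The plan is to show that $3 \nmid D_q(n)$ for every $n \ge 1$ by exploiting the extreme degeneracy of the Browkin–Sălăjan sequence modulo powers of $3$: since $\rho(3^e) = 2$ for all $e \ge 1$ (Lemma~\ref{previous}), the sequence assumes only two distinct residue classes modulo any power of $3$ (after its short pre-period). This means $3^e$ can discriminate at most a bounded number of initial terms, far too few to ever be the \emph{smallest} discriminating modulus for large $n$, and a direct check rules out the small cases. First I would suppose, for contradiction, that $D_q(n) = m$ with $3 \mid m$, and write $m = 3^{\alpha} \delta$ with $(\delta, 3) = 1$ and $\alpha \ge 1$.

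The key observation is that modulo $3$ the sequence $u_q$ eventually alternates between $q^*/4$ and $-q^*/4$, so it takes only two values; consequently, among $u_q(1), \ldots, u_q(n)$ at most a handful can be pairwise incongruent modulo $3$ (and hence modulo $m$, since congruence mod $m$ forces congruence mod $3$). More precisely, I would argue that if $u_q(1), \ldots, u_q(n)$ are pairwise distinct modulo $m$, then they are in particular pairwise distinct modulo $3$ (using $\rho(3^e)=2$ together with \eqref{per3} to reduce from $3^\alpha$ to $3$), but with only two available residue classes modulo $3$ after the pre-period this caps $n$ at a very small value. The cleanest route is to invoke Lemma~\ref{gee} with $g(n) = 2n - 1$, which is legitimate by Corollary~\ref{bertie}: for $m = 3^{\alpha}\delta$ we have by Theorem~\ref{generalperiod} and \eqref{per2} that $\rho(m) = \lcm(\rho(3^{\alpha}), \rho(\delta)) = \lcm(2, \rho(\delta))$, which by Lemma~\ref{previous1} is at most $\delta \le m/3$. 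When the sequence is purely periodic (that is, when $9 \nmid m$, by Corollary~\ref{9}), we get $g(\rho(m)) = 2\rho(m) - 1 \le 2m/3 - 1 < m$, so Lemma~\ref{gee} immediately yields that $m$ is a non-value.

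The remaining case, $9 \mid m$, is where the sequence is only eventually periodic (pre-period $\max(1,\alpha) \ge 2$), so Lemma~\ref{gee} does not apply verbatim. Here I would argue directly via the incongruence index: since modulo $9$ the sequence is $(3+q^*)/4,\ \overline{q^*/4, -q^*/4}$, only the first term escapes the two-cycle, so $u_q(1), \ldots, u_q(n)$ can occupy at most three residue classes modulo $9$, forcing $n \le 3$ whenever $9 \mid m$ discriminates; but then $m \ge 9 > 2n - 1 = 5$ contradicts Corollary~\ref{bertie}. \textbf{The main obstacle} is precisely this pre-period subtlety: one must not blindly apply the pure-periodicity machinery when $9 \mid d$, and instead count residue classes by hand to bound the incongruence index $\iota_q(3^\alpha) \le 3$. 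Once that bound is in place, the contradiction with $n \le D_q(n) \le 2n-1$ is immediate in every case, completing the proof.
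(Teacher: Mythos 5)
Your handling of the purely periodic case ($9\nmid m$, cofactor $\delta>1$) is fine, and indeed slightly different from the paper: pure periodicity plus $\rho(m)=\lcm(2,\rho(\delta))\le\delta=m/3$ fed into Lemma~\ref{gee} with $g(n)=2n-1$ legitimately kills those moduli. But the case $9\mid m$ rests on a reversed implication, and this is a genuine gap. You claim that if $u_q(1),\ldots,u_q(n)$ are pairwise distinct modulo $m$ then they must be pairwise distinct modulo $9$ (so that the three available residue classes mod $9$ force $n\le 3$). The divisibility $9\mid m$ gives exactly the opposite: congruence mod $m$ implies congruence mod $9$, hence distinctness mod $9$ implies distinctness mod $m$, not conversely. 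Terms can be pairwise distinct modulo $m=3^{\alpha}\delta$ while colliding modulo $9$, the separation coming entirely from the cofactor $\delta$. Concretely, $u_q(4)-u_q(2)=(81-9)/4=18$, so $u_q(2)\equiv u_q(4)\pmod{9}$ and yet $u_q(2)\not\equiv u_q(4)\pmod{45}$; thus for $m=45$ (or $m=27$, etc.) your bound $n\le 3$ is unjustified and the case collapses. The same reversed claim already appears in your second paragraph ("pairwise distinct modulo $m$ \ldots\ then pairwise distinct modulo $3$"), though there it is superseded by the Lemma~\ref{gee} route.

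What is needed instead — and what the paper does — is to push the degeneracy modulo $3^{\alpha}$ onto the cofactor rather than pull distinctness down from $m$: for same-parity indices $i,j\ge\alpha$ one has $u_q(i)\equiv u_q(j)\pmod{3^{\alpha}}$ automatically, so if $D_q(n)=3^{\alpha}\delta$ then these terms must be pairwise incongruent modulo $\delta$; Lemma~\ref{oneventje} then yields $\ord_{4\delta}(9)>(n-\alpha)/2$, and the chain $n-\alpha+1\le 2\ord_{4\delta}(9)=\lcm(2,\rho(\delta))\le\delta\le 2n/3^{\alpha}$ combined with $n\ge 3^{\alpha}/2$ gives $3^{\alpha}\le 2\alpha$, impossible for $\alpha\ge1$. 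This single argument covers all $\alpha\ge 1$, so no split on $9\mid m$ is needed. A second, smaller, defect in your write-up: when $\delta=1$ (i.e.\ $m=3$, or a pure power of $3$) Lemma~\ref{previous1} cannot be invoked, since $\lcm(2,\rho(1))=2>1$; in particular $\rho(3)=2>m/3$, so your Lemma~\ref{gee} estimate fails for $m=3$, and that case needs the direct observation that $u_q(1)\equiv u_q(3)\pmod{3}$ while $D_q(2)=2$. You gesture at "a direct check rules out the small cases" but never perform it.
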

\begin{proof} We argue by contradiction and so assume that 
$D_q(n)=3^{\alpha}m$ with $(m,3)=1$ and $\alpha \ge 1$.
Since by definition $u_q(\alpha)\not\equiv u_q(\alpha+2t)\pmod*{3^{\alpha}m}$
for $t=1,\ldots,\lfloor{(n-\alpha)/2}\rfloor$ and  $u_q(\alpha)\equiv 
u_q(\alpha+2t)\pmod*{3^{\alpha}}$ 
for every $t\ge 1$, it follows that $u_q(i)\not\equiv u_q(j)\pmod*m$ 
with $\alpha \le i<j\le n$ and $i$ and $j$ of the same parity. By Lemma \ref{oneventje}
it then follows that $\ord_{4m}(9)>(n-\alpha)/2$. By (in this order) 
Corollary \ref{lcmlcm}, Lemma \ref{previous1}
and Corollary \ref{bertie} we then find that $n-\alpha+1\le 2\ord_{4m}(9)=
\lcm(2,\rho(m))\le m\le 2n/3^{\alpha}$. 
This implies
that $n\le 3^{\alpha}(\alpha-1)/(3^{\alpha}-2)$. On the other hand, by
 Corollary \ref{bertie} we have
$3^{\alpha}m\le 2n$ and hence $n\ge 3^{\alpha}/2$. Combining the upper and the
lower bound for $n$ yields
$3^{\alpha}\le 2\alpha$, which has no solution with $\alpha\ge 1$. \end{proof}

\noindent {\tt Commentary}. 
This proof is quite similar to that of \cite[Lemma 11]{MZ}.
The $2\ord_{4m}(9)=\rho(m)$ there has now been replaced by
the identity $2\ord_{4m}(9)=\lcm(2,\rho(m))$ (Corollary 
\ref{lcmlcm}). Instead of the earlier $\rho(m)\le m,$
we now need $\lcm(2,\rho(m))\le m$, but this is true by Lemma \ref{previous1}.
In the earlier proof there is $n\le 3^{\alpha}(\alpha-1)/(3^{\alpha}-1)$
instead of the correct $n\le 3^{\alpha}(\alpha-1)/(3^{\alpha}-2)$ and
$3^{\alpha}\le 2\alpha-1$ instead of the correct $3^{\alpha}\le 2\alpha$.

\subsection{$D_q(n)$ is a prime-power}
\begin{Lem}\label{nonnie}
Suppose that $d$ with $9\nmid d$ satisfies $\rho(d)\le d/2.$
Then $d$ is a Browkin-S\u al\u ajan non-value.
\end{Lem}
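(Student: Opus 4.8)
The plan is to derive this directly from the period-versus-discriminator criterion of Lemma \ref{gee}, feeding it the explicit upper bound for the discriminator established in Corollary \ref{bertie}. The only real content is to check that the hypotheses of Lemma \ref{gee} are met, and in particular that the relevant inequality is \emph{strict}.

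First I would set $g(n)=2n-1$. By Corollary \ref{bertie} we have $D_q(n)\le 2n-1=g(n)$ for every $n\ge 1$, and $g$ is clearly non-decreasing, so the standing hypothesis of Lemma \ref{gee} holds. Next, since by assumption $9\nmid d$, Corollary \ref{9} guarantees that the Browkin-S\u al\u ajan sequence is purely periodic modulo $d$, which is the second hypothesis required to invoke Lemma \ref{gee}.

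It then remains to verify the key inequality $g(\rho(d))<d$. Here I would use the assumption $\rho(d)\le d/2$ to compute
\[
g(\rho(d))=2\rho(d)-1\le 2\cdot\frac{d}{2}-1=d-1<d.
\]
With all three hypotheses in place, Lemma \ref{gee} immediately yields that $d$ is a Browkin-S\u al\u ajan non-value, completing the argument.

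There is no serious obstacle in this proof; it is a one-line application once the bookkeeping is set up. The single point demanding care is that the inequality $g(\rho(d))<d$ must be strict, which is exactly why the sharpened bound $D_q(n)\le 2n-1$ from Corollary \ref{bertie} is needed rather than merely $D_q(n)\le 2n$: the $-1$ is what converts the non-strict hypothesis $\rho(d)\le d/2$ into the strict conclusion $g(\rho(d))\le d-1<d$ required by Lemma \ref{gee}. Had only $D_q(n)\le 2n$ been available, the borderline case $\rho(d)=d/2$ would give $g(\rho(d))=d$ and the criterion would fail to apply.
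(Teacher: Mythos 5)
Your proof is correct and is essentially the paper's own argument: the paper inlines exactly the same reasoning (pure periodicity from $9\nmid d$, the bound $D_q(n)\le 2n-1$ of Corollary \ref{bertie}, and $\rho(d)\le d/2$ yielding a contradiction), whereas you route it through the general criterion of Lemma \ref{gee}, whose proof is that very argument. Your closing remark about needing the strict bound $2n-1$ rather than $2n$ correctly identifies the one delicate point.
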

\begin{proof} Suppose that $d=D_q(n)$ for some integer $n$. 
By Lemma \ref{generalperiod} the condition $9\nmid d$ guarantees that 
the Browkin-S\u al\u ajan sequence is purely
periodic modulo $d$. 
Therefore $u_q(1)\equiv u_q(1+\rho(d))\pmod*d$ and so
$\rho(d)\ge n$. The assumption 
$\rho(d)\le d/2$  now implies that 
$d\ge 2\rho(d)\ge 2n$, contradicting
Corollary \ref{bertie}. \end{proof}

\indent We now have the necessary ingredients to establish the following result. 
Let $p$ be odd. On noting that in $(\mathbb Z/p^m\mathbb Z)^*$ a square has maximal order $\varphi(p^m)/2$, we
see that the following result says that a 
Browkin-S\u al\u ajan value is either a power of two or a 
prime power $p^m$ with $9$ having maximal multiplicative order in $(\mathbb Z/p^m\mathbb Z)^*$.
\begin{Lem}
\label{redpp}
A Browkin-S\u al\u ajan value greater than $1$ must be of the form $p^m$, with $p=2$ or $p>3$ 
and $m\ge 1$. Further, one must have 
$\ord_{p^m}(9)=\varphi(p^m)/2$ and $\ord_p(9)=(p-1)/2$. If $m\ge 2$, 
then $p$ is not Mirimanoff. In case $p^m=q$ we even have
that $\ord_q(3)=q-1$.
\end{Lem}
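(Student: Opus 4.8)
The plan is to peel off the structure of a value $d>1$ in four stages, passing from ``$d$ is a prime power'' down to the fine condition on the order of $9$ modulo $p^m$ and the Mirimanoff criterion. First I would record, via Lemma~\ref{notdrie}, that $3\nmid d$, so in particular $9\nmid d$ and (by Corollary~\ref{9}) the sequence is purely periodic modulo $d$. To see that $d$ is a prime power, suppose not and write $d=d_1d_2$ with $d_1,d_2>1$ and $(d_1,d_2)=1$; then Lemma~\ref{uppierho} gives $\rho(d)\le d/2$, and since $9\nmid d$, Lemma~\ref{nonnie} declares $d$ a non-value, a contradiction. Hence $d=p^m$, and $3\nmid d$ leaves $p=2$ or $p>3$. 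Read contrapositively, the same two lemmas supply the inequality that powers the rest: as $d=p^m$ is a value with $9\nmid d$, Lemma~\ref{nonnie} forces $\rho(p^m)>p^m/2$.

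For $p>3$ I would upgrade this into $\ord_{p^m}(9)=\varphi(p^m)/2$. Using $\ord_{4p^m}(9)=\ord_{p^m}(9)$ together with Corollary~\ref{lcmlcm}, one has $2\ord_{p^m}(9)=\lcm(2,\rho(p^m))\ge\rho(p^m)>p^m/2$, hence $\ord_{p^m}(9)>p^m/4$. On the other hand $9$ is a square, so $\ord_{p^m}(9)$ divides $\varphi(p^m)/2=p^{m-1}(p-1)/2$. The elementary point is that a \emph{proper} divisor of $\varphi(p^m)/2$ is at most $\varphi(p^m)/4=p^{m-1}(p-1)/4<p^m/4$; since our order exceeds $p^m/4$ it cannot be proper, so $\ord_{p^m}(9)=\varphi(p^m)/2$.

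The descent to $\ord_p(9)=(p-1)/2$ and the Mirimanoff statement is where the lifting enters, and I expect this to be the technical heart. Put $t=\ord_p(9)$ and $s=\nu_p(9^t-1)\ge 1$; since $9$ is a square modulo $p$ we have $t\mid(p-1)/2$. Applying Lemma~\ref{Beyl} to $r=9^t$ along $n=p^k$ gives $\nu_p(9^{tp^k}-1)=k+s$, whence $\ord_{p^m}(9)=t\,p^{\max(0,m-s)}$. Matching this against $\varphi(p^m)/2=p^{m-1}(p-1)/2$ and comparing prime-to-$p$ parts forces $t=(p-1)/2$, i.e.\ $\ord_p(9)=(p-1)/2$; comparing $p$-parts gives $\max(0,m-s)=m-1$. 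For $m\ge 2$ the right-hand side is positive, so necessarily $s=1$, that is $\nu_p(3^{p-1}-1)=\nu_p(9^{(p-1)/2}-1)=1$, which says exactly that $3^{p-1}\not\equiv 1\pmod*{p^2}$; thus $p$ is not Mirimanoff.

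Finally I would treat the exceptional case $p^m=q$ on its own, since here the period may be odd and Theorem~\ref{generalperiod} behaves differently. Because $u_q(j)\equiv 3^j/4\pmod* q$, the period modulo $q$ equals $\ord_q(3)$, so the inequality $\rho(q)>q/2$ of the first paragraph reads $\ord_q(3)>q/2$. As $\ord_q(3)\mid q-1$ and the only divisor of $q-1$ exceeding $q/2$ is $q-1$ itself (the next being at most $(q-1)/2$), we conclude $\ord_q(3)=q-1$. The recurring obstacle is not any single step but keeping the parity of $\rho$ under control: away from $d=q$ the period is even and equals $2\ord(9)$, so the order arguments go through uniformly, and it is precisely the $d=q$ anomaly, where $\rho(q)=\ord_q(3)$ can be odd, that forces this separate concluding argument.
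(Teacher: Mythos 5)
Your proposal is correct, and its first half --- the reduction to a prime power $p^m$ with $p=2$ or $p>3$ via Lemmas \ref{notdrie}, \ref{uppierho} and \ref{nonnie} --- coincides exactly with the paper's argument. Where you genuinely diverge is in how the order conditions are extracted. The paper stays at the level of the period: Lemma \ref{previous} gives the dichotomy $\rho(p^m)=\varphi(p^m)$ or $\rho(p^m)\le\varphi(p^m)/2$ (the latter excluded by Lemma \ref{nonnie}), and it then splits into the cases $p^m=q$ and $p^m\ne q$, invoking Theorem \ref{generalperiod} to convert $\rho(p^m)=\varphi(p^m)$ into $\ord_{p^m}(9)=\varphi(p^m)/2$, after which the remaining claims fall to one-line contrapositive inequalities (if $\ord_p(9)<(p-1)/2$, or if $p$ is Mirimanoff with $m\ge 2$, then $\ord_{p^m}(9)<\varphi(p^m)/2$). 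You instead pass to the order of $9$ immediately and uniformly via Corollary \ref{lcmlcm}, obtaining $\ord_{p^m}(9)>p^m/4$, and close with the observation that a proper divisor of $\varphi(p^m)/2$ is at most $\varphi(p^m)/4<p^m/4$; this handles $p^m=q$ and $p^m\ne q$ on an equal footing and avoids the parity dichotomy of Theorem \ref{generalperiod}, so that you need the exceptional case $d=q$ only at the very end, for the claim $\ord_q(3)=q-1$, where your divisor-of-$(q-1)$ argument matches the paper's Case 1. For the descent to $\ord_p(9)=(p-1)/2$ and the Mirimanoff statement, your explicit formula $\ord_{p^m}(9)=t\,p^{\max(0,m-s)}$ from Lemma \ref{Beyl} is more computational than the paper's soft inequalities, but it buys slightly more: it pins down $\nu_p(3^{p-1}-1)=1$ exactly when $m\ge 2$, rather than merely refuting $\nu_p(3^{p-1}-1)\ge 2$. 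Both routes are of comparable length; the paper's is softer and shorter on the page, while yours is more self-contained in that the conversion from periods to orders rests on a single corollary rather than on the full statement of Theorem \ref{generalperiod}.
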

\begin{proof} Suppose that $d>1$ is a Browkin-S\u al\u ajan value that is not a prime power. 
Thus we can write $d=d_1d_2$ with
$d_1,d_2>1$, $(d_1,d_2)=1$. By Lemma \ref{notdrie} we have  $3\nmid d_1d_2$. 
By Lemma \ref{uppierho} we have $\rho(d_1d_2)\le d_1d_2/2$, which by
Lemma \ref{nonnie} implies that $d=d_1d_2$ is a non-value. Thus $d$ is a 
prime power $p^m$. By Lemma \ref{notdrie} we have $p=2$ or $p>3$. Now
let us assume that $p>3$. 
By Lemma \ref{previous} we have either $\rho(p^m)=\varphi(p^m)$ or $\rho(p^m)\le \varphi(p^m)/2$.
The latter inequality leads to $\rho(p^m)\le p^m/2$ and hence to $p^m$ being a non-value. 
Thus we must have $\rho(p^m)=\varphi(p^m)$.
\medskip\hfil\break
{\sc{Case 1.}} $p^m=q$. Here we note that $\rho(q)=\ord_q(3)$. If 3 is not a primitive
root modulo $q$, then $\rho(q)|\varphi(q)/2$ and hence is $\le q/2$ and so a non-value.
Thus $\ord_q(3)=q-1$ and hence $\ord_q(9)=(q-1)/2$.\medskip\hfil\break
{\sc{Case 2.}} $p^m\ne q$. The condition $\rho(p^m)=\varphi(p^m)$ by
Theorem \ref{generalperiod} can be rewritten as
 $\ord_{p^m}(9)=\varphi(p^m)/2$. Now, 
if $\ord_p(9)<(p-1)/2$, this leads
to $\ord_{p^m}(9)<\varphi(p^m)/2$ and hence we must have $\ord_p(9)=(p-1)/2$. 
Finally, suppose that $m\ge 2$ and 
that $p$ is Mirimanoff, that is, $3^{p-1}\equiv 1\pmod*{p^2}$. 
Then $\ord_{p^m}(9)\le \varphi(p^m)/p<\varphi(p^m)/2$. This contradiction shows that, if $m\ge 2$, then $p$
is not Mirimanoff. \end{proof}

\noindent {\tt Commentary}. The statement and proof of
Lemma \ref{redpp} is similar to that of 
\cite[Lemma 13]{MZ}, but with the case $p^m=q$ being
considered separately.

\subsection{Powers of $q$ assumed by $D_q(n)$}
In the study of ${\cal D}_q$ the powers of $q$ play a special role and require separate
consideration. We will use the following simple result.
\begin{Lem}
\label{valuenotFermat}
Let $p$ be a Fermat prime. Then $p\not\in \cup_{q\ge5}{\cal D}_q$.
\end{Lem}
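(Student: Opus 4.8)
The plan is to exploit the fact that a Fermat prime $p=2^m+1$ lies exactly one above the power of two $2^m=p-1$, which by Lemma~\ref{prop2n} is an excellent discriminator. First I would dispose of $p=3$: it is a non-value by Lemma~\ref{notdrie}, since $3\nmid D_q(n)$. So from now on I assume $p\ge 5$, i.e.\ $m\ge 2$.

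Suppose, for contradiction, that $p=D_q(n)$ for some prime $q\ge 5$ and some $n\ge 1$. Since $D_q(1)=1\ne p$ we have $n\ge 2$, and the trivial lower bound $n\le D_q(n)$ from Corollary~\ref{bertie} gives $n\le 2^m+1$. Hence either $2\le n\le 2^m$ or $n=2^m+1=p$. In the first case $2^m\ge n$, so Lemma~\ref{prop2n} shows that $u_q(1),\dots,u_q(n)$ are pairwise distinct modulo $2^m$; this forces $D_q(n)\le 2^m=p-1<p$, contradicting $D_q(n)=p$. Thus only the boundary case $n=p$ can survive, and there $D_q(p)=p$ would require $u_q(1),\dots,u_q(p)$ to be pairwise distinct modulo $p$.

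To finish I would exhibit an explicit collision among these $p$ terms. As $p$ is odd, $u_q(p)-u_q(1)=(3^p+q^*)/4-(3+q^*)/4=(3^p-3)/4$; crucially the term $q^*$ cancels, so the computation is uniform in $q$ (and in particular covers the sub-case $p=q$ with no extra work). By Fermat's little theorem $3^p\equiv 3\pmod*{p}$, and since $4$ is invertible modulo the odd prime $p$, we obtain $u_q(p)\equiv u_q(1)\pmod*{p}$. This contradicts the required pairwise distinctness, completing the argument.

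There is no deep obstacle here: everything rests on the elementary tension between the bound $n\le D_q(n)$ and the strength of the power-of-two discriminator $p-1=2^m$. The only points demanding care are checking that the two cases $n\le 2^m$ and $n=p$ are genuinely exhaustive, and verifying that $q^*$ drops out of $u_q(p)-u_q(1)$, so that a single instance of Fermat's little theorem settles all $q$ simultaneously. Notably, this route avoids any appeal to $\ord_p(3)$, to the primitivity of $3$ modulo $p$, or to the periodicity results of the previous section.
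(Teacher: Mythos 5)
Your proof is correct and follows essentially the same route as the paper's: use Lemma~\ref{prop2n} to rule out $n\le p-1$, the bound $n\le D_q(n)$ to rule out $n>p$, and the collision $u_q(1)\equiv u_q(p)\pmod*{p}$ to kill the remaining case $n=p$. The only difference is that you spell out the Fermat-little-theorem verification of that congruence (and the case $p=3$), which the paper leaves implicit.
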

\begin{proof} By contradiction. So suppose that $D_q(n)=p$ for some $q\ge 5$ and $n\ge 1$.
Write $p=2^{e}+1$. By Lemma \ref{prop2n} we see that $D_q(n)\le 2^{e}$
for $n\le p-1$. Since $D_q(n)\ge n>p$ for $n>p$, it follows that $n=p$. As
$u_q(1)\equiv u_q(p)\pmod*p$, this is impossible. \end{proof}

\begin{Lem}
\label{prop5n} Let $q\ge 5$ be a prime.\\
{\rm a)} If $q$ is Artin, then the integers $u_q(1),\dots,u_q(n)$ are pairwise distinct modulo 
$q$ if and only if $q\ge n+1.$\\
{\rm b)} If $q$ is Artin and not Mirimanoff,
 then the integers $u_q(1),\dots,u_q(n)$ are pairwise distinct modulo 
$q^{f}$ if and only if $$q^f\ge \frac{qn}{q-1}.$$
\end{Lem}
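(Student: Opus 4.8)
The plan is to reduce both parts to the equal-parity criterion of Lemma~\ref{oneventje}, exploiting the fact that modulo $q$ the ``parity term'' $q^*(-1)^j$ disappears. Indeed $q^*=\pm q\equiv 0\pmod* q$ and $\gcd(4,q)=1$, so $u_q(j)\equiv 3^j\cdot 4^{-1}\pmod* q$ for every $j$. Consequently any congruence $u_q(i)\equiv u_q(j)\pmod*{q^f}$ forces, after reduction modulo $q$, that $3^i\equiv 3^j\pmod* q$, i.e. $\ord_q(3)\mid(j-i)$; since $q$ is Artin this reads $(q-1)\mid(j-i)$, and in particular $j-i$ is \emph{even}. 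Thus a collision can occur only between indices of the same parity, and comparisons of opposite-parity terms are automatically harmless, for every $f\ge 1$ and every $n$. This elementary parity observation is the heart of the argument and is what lets the two statements hinge entirely on the order of $9$.

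With opposite-parity collisions excluded, pairwise distinctness of $u_q(1),\dots,u_q(n)$ modulo $q^f$ is equivalent to distinctness within each parity class, which is exactly what Lemma~\ref{oneventje} governs. Applying it with $\alpha=1$ and $m=q^f$ (legitimate since $3\nmid q^f$) gives that $u_q(1),\dots,u_q(n)$ are pairwise distinct modulo $q^f$ if and only if $\ord_{4q^f}(9)>(n-1)/2$, and since $9\equiv 1\pmod* 4$ one may replace $\ord_{4q^f}(9)$ by $\ord_{q^f}(9)$. For part~a) I only need the case $f=1$: because $3$ is a primitive root modulo $q$ one has $\ord_q(9)=(q-1)/2$, so the criterion $\ord_q(9)>(n-1)/2$ becomes $q-1>n-1$, i.e. $q\ge n+1$. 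Note this uses only that $q$ is Artin, which is why part~a) carries no Mirimanoff hypothesis.

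For part~b) the one nontrivial step, and the main obstacle, is the lifting of the order from $q$ to $q^f$: I must show that being Artin and not Mirimanoff yields $\ord_{q^f}(9)=\varphi(q^f)/2$ for every $f\ge 1$, rather than some smaller divisor. This is precisely where the non-Mirimanoff condition $3^{q-1}\not\equiv 1\pmod*{q^2}$ enters. By the standard primitive-root lifting theorem, a primitive root $g$ modulo an odd prime $p$ is a primitive root modulo $p^f$ for all $f$ exactly when $g^{p-1}\not\equiv 1\pmod*{p^2}$; applied to $g=3$, $p=q$ this gives $\ord_{q^f}(3)=\varphi(q^f)=q^{f-1}(q-1)$, and since this is even, $\ord_{q^f}(9)=\varphi(q^f)/2=q^{f-1}(q-1)/2$. (This matches the necessary condition already isolated in Lemma~\ref{redpp}.) The criterion $\ord_{q^f}(9)>(n-1)/2$ then reads $q^{f-1}(q-1)>n-1$, i.e. $q^{f-1}(q-1)\ge n$ since both sides are integers, which after multiplying by $q/(q-1)$ rearranges to $q^f\ge qn/(q-1)$. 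Everything apart from the lifting step is routine bookkeeping with multiplicative orders together with the parity reduction of the first paragraph.
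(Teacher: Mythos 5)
Your proof is correct, but it is organized rather differently from the paper's. The paper argues the two directions separately: for necessity it constructs an explicit collision, noting that if $q^f<qn/(q-1)$ then $1+(q-1)q^{f-1}\le n$ and, by Lemma~\ref{Beyl}, $u_q(1)\equiv u_q(1+(q-1)q^{f-1})\pmod*{q^f}$; for sufficiency in part~b) it first restricts to index pairs in the same residue class modulo $q-1$ (the same use of the Artin hypothesis as your opposite-parity exclusion, only finer) and then derives a contradiction from Lemma~\ref{Beyl} together with $\nu_q(3^{q-1}-1)=1$. You instead collapse both directions into a single equivalence: the Artin hypothesis kills all cross-parity collisions modulo every power of $q$, Lemma~\ref{oneventje} converts pairwise distinctness into the exact inequality $\ord_{q^f}(9)>(n-1)/2$, and the non-Mirimanoff hypothesis enters only through the classical primitive-root lifting theorem giving $\ord_{q^f}(3)=\varphi(q^f)$ for all $f$. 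The underlying number theory is identical --- your black-box lifting theorem is itself an immediate consequence of the paper's Lemma~\ref{Beyl}, applied exactly where the paper applies it --- but your criterion-based route is more unified (parts a) and b), and both implications, fall out of one order computation), whereas the paper's is more self-contained, relying only on results proved in the text rather than on an external standard theorem. One small stylistic gain of your version worth noting: it makes transparent that the paper's Lemma~\ref{oneventje}, which the paper does not actually invoke in its own proof of this lemma, suffices to run the whole argument once the cross-parity comparisons are dismissed.
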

\begin{proof} If $q^f<qn/(q-1)$, then $1+(q-1)q^{f-1}\le n$. By Lemma \ref{Beyl} we have
$$3^{(q-1)q^f}\equiv 1\pmod*{q^f},$$ which ensures that 
$u_q(1)\equiv u_q(1+(q-1)q^f)\pmod*{q^f}$. 
Thus the
condition $q^f\ge qn/(n-1)$ is necessary in order
that $u_q(1),\ldots,u_q(n)$ are pairwise distinct modulo $q^f$. We next show it is also sufficient.
So assume that 
$q^f\ge qn/(q-1)$. We distinguish the following two cases.\medskip\hfil\break
a) We let $f=1$ and we have to show that $u_q(1),\ldots,u_q(n)$ are pairwise distinct
modulo $q$ iff $q\ge n+1$. This is a consequence of the sequence being periodic
with period $q-1$ (as by assumption 
$q$ is an Artin prime).\medskip\hfil\break
b) The statement in case $f=1$ is a weaker
version of part a). So we may assume that $f\ge 2$.
It suffices to show that $u_q(j_1)\not\equiv u_q(k_1)\pmod*{q^f}$ with $1\le j_1< k_1\le n$ in the same congruence class modulo $q-1$. We will argue by contradiction
and so assume that
$$u_q((q-1)j+\alpha) \equiv u_q((q-1)k+\alpha)\pmod*{q^{f}},$$ with 
$1\le (q-1)j+\alpha<(q-1)k+\alpha \leq n$ and $1\le \alpha \le q-1$. From this it follows that $$3^{(q-1)(k-j)}\equiv 1 \pmod*{q^{f}},$$ where
$$k-j\le \frac{n-\alpha}{q-1} <\frac{n}{q-1}\le q^{f-1}$$ by hypothesis and hence 
$\nu_q(k-j)\le f-2$. The assumption that $q$ is not Mirimanoff prime ensures
that $\nu_q(3^{q-1}-1)=1$.
On invoking Lemma \ref{Beyl} we now infer that 
$$\nu_q(3^{(q-1)(k-j)}-1)=\nu_q(k-j) + \nu_q(3^{q-1}-1) \leq f-2+1=f-1,$$ 
contradiction. \end{proof}

The following result allows one to
determine  precisely which powers of
$q$ appear in ${\cal D}_q$.
\label{valuesets}
\begin{Lem}
\label{qpowerspresent}
Let $q\ge 5$ be a prime.\\
{\rm a)} We have $q\in {\cal D}_q$ if and only if 
$q$ is Artin and $q$ is not 
Fermat.\\
{\rm b)} Let $f\ge 2$. 
Then $q^f\in {\cal D}_q$ 
if and only if $q$ is Artin and not Mirimanoff, and
satisfies, for some natural number $e,$ the
inequality
\begin{equation}
\label{ciolan}
{q\over q-1}(2^e+1)\le q^f<2^{e+1}.
\end{equation}
\end{Lem}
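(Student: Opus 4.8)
The goal is to characterize, for a fixed Artin prime $q$, exactly which prime powers $q^f$ (with $f\ge 1$ treated in part (a), $f\ge 2$ in part (b)) lie in the image $\mathcal D_q$. The natural strategy is to combine the incongruence information from Lemma \ref{prop5n} with the upper/lower bounds on $D_q(n)$ from Corollary \ref{bertie} and the powers-of-$2$ result of Lemma \ref{prop2n}. The key reformulation is: $q^f\in\mathcal D_q$ means there exists some $n$ with $D_q(n)=q^f$, i.e. $q^f$ discriminates $u_q(1),\dots,u_q(n)$ but no smaller integer does. Since by Lemma \ref{redpp} the only competitors for the discriminator value are powers of $2$, the condition $D_q(n)=q^f$ amounts to saying that $q^f$ discriminates the first $n$ terms while the largest power of $2$ below $q^f$ does not.

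\textbf{Part (a).}
For $f=1$, I would argue as follows. By Lemma \ref{prop5n}(a), $q$ discriminates $u_q(1),\dots,u_q(n)$ iff $n\le q-1$. For $q$ to actually be the value $D_q(n)$, we need $q$ to beat every smaller candidate; by Lemma \ref{redpp} the only relevant smaller candidates are powers of $2$. So $q\in\mathcal D_q$ iff there is an $n\le q-1$ for which no power of $2$ strictly below $q$ already discriminates the first $n$ terms. Using Lemma \ref{prop2n}, $2^e$ discriminates iff $2^e\ge n$; hence we need the largest power of $2$ below $q$, say $2^{e_0}$, to satisfy $2^{e_0}<n$ for some admissible $n$. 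Taking $n=2^{e_0}+1$, this works precisely when $2^{e_0}+1\le q-1$, i.e. when $q$ is \emph{not} of the form $2^{e_0}+1$. This is exactly the statement that $q$ is not Fermat, and Lemma \ref{valuenotFermat} already rules out the Fermat case directly, giving the clean equivalence claimed.

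\textbf{Part (b) and the main obstacle.}
For $f\ge 2$, Lemma \ref{prop5n}(b) tells me that $q^f$ discriminates $u_q(1),\dots,u_q(n)$ iff $q^f\ge qn/(q-1)$, equivalently iff $n\le (q-1)q^{f-1}$. The event $D_q(n)=q^f$ therefore requires both that $q^f$ discriminates and that the nearest power of $2$ below $q^f$ fails to discriminate for the same $n$. Writing $2^e$ for the largest power of $2$ with $2^e<q^f$, failure of $2^e$ means (by Lemma \ref{prop2n}) that $n>2^e$, i.e. $n\ge 2^e+1$, while $2^{e+1}>q^f$ ensures no intermediate power of $2$ interferes. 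Combining $2^e+1\le n$ with $n\le (q-1)q^{f-1}=(q-1)q^f/q$ gives the existence condition $\tfrac{q}{q-1}(2^e+1)\le q^f$, and the constraint $q^f<2^{e+1}$ simply records that $2^e$ is the correct competing power of $2$. The main obstacle is pinning down the bookkeeping so that one genuinely exhibits an admissible integer $n$ realizing $D_q(n)=q^f$ (rather than merely bounding it from one side): one must verify that for $n$ in the window $[2^e+1,\,(q-1)q^{f-1}]$ the value $q^f$ is simultaneously $\ge n$ as a discriminator and strictly smaller than the minimal power of $2$ that discriminates, which is where the two inequalities in \eqref{ciolan} must be shown to be not only necessary but jointly sufficient. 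I expect this sufficiency direction — producing the witnessing $n$ and checking it collides under no smaller modulus — to require the most care, with the non-Mirimanoff hypothesis entering exactly to guarantee that the threshold $n\le(q-1)q^{f-1}$ from Lemma \ref{prop5n}(b) is sharp.
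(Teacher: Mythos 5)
The decisive flaw is your claim that ``by Lemma \ref{redpp} the only relevant smaller candidates are powers of $2$.'' Lemma \ref{redpp} says no such thing: it permits any prime power $p^m$ with $p>3$ and $\ord_p(9)=(p-1)/2$, i.e.\ any $p\in{\cal P}$ (such as $7,11,17,19,\ldots$). With your witness $n=2^{e_0}+1$ in part (a), every integer in $[2^{e_0}+1,\,q-1]$ is a candidate for $D_q(n)$, and this range typically contains primes of ${\cal P}$: for $q=13$ your witness is $n=9$, and nothing you cite excludes $D_{13}(9)=11$. Ruling out such primes is precisely the hard content of the paper --- Lemma \ref{remainder}, Proposition \ref{reducetoprime}, the universal incongruence index and the numerical verification behind Lemma \ref{7remains!} --- none of which you invoke, and all of which are established \emph{after} this lemma in the paper's logical order, so the lemma must be (and is) proved without them. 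The paper sidesteps the whole issue by choosing the extremal witness $n=q-1$: Corollary \ref{bertie} then gives $q-1\le D_q(q-1)\le q$, so the \emph{only} competitor is the single integer $q-1$, which is even and (since $q$ is not Fermat) not a power of $2$, hence not a prime power at all, and Lemma \ref{redpp} eliminates it. The same device works in part (b) with the witness $n=(q-1)q^{f-1}$: under \eqref{ciolan} one has $n\ge 2^e+1$, so no power of $2$ below $q^f$ discriminates, and no smaller power of $q$ can discriminate by the necessity direction of Lemma \ref{prop5n}.

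Two ``only if'' components are also missing. You fix an Artin prime $q$ at the outset, so you never prove that a non-Artin $q$ satisfies $q\notin{\cal D}_q$ and $q^f\notin{\cal D}_q$; the paper gets this from $\rho(q)=\ord_q(3)\le(q-1)/2$, respectively $\rho(q^f)\le q^{f-1}\rho(q)\le q^f/2$, together with Lemma \ref{nonnie}. Likewise, in part (b) one must show that a Mirimanoff prime $q$ has $q^f\notin{\cal D}_q$ for $f\ge 2$; the paper notes that then $\rho(q^f)\le q^f/q<q^f/2$ and applies Lemma \ref{nonnie} again. Your remark that the non-Mirimanoff hypothesis enters ``to make the threshold sharp'' concerns only the sufficiency direction, not this necessity. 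Finally, you explicitly leave the sufficiency in (b) --- exhibiting an $n$ with $D_q(n)=q^f$ --- as something you ``expect to require the most care''; that is exactly the step the paper settles with the witness $n=(q-1)q^{f-1}$ as above, so as written your text is a plan for that direction rather than a proof of it.
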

\begin{proof}\medskip \hfil\break  a)
Note that $u_q(j)\equiv 3^j/4\pmod*p$.
Thus, if $q$ is Artin, then  $u_q(1),\ldots,u_q(q-1)$ 
are pairwise distinct modulo $q$ and hence
$D_q(q-1)\le q$.
If $q$ is not Fermat, then $q-1$ is not a power of 
any prime number $p\ge 2$
and so by Lemma \ref{redpp} it
follows that $D_q(q-1)=q$.
If $q$ is Fermat, 
then $q\not \in {\cal D}_q$ by Lemma \ref{valuenotFermat}. If $q$ is 
not Artin, then $\rho(q)\le q/2$ and $q\not \in {\cal D}_q$ by
Lemma \ref{nonnie}.\medskip \hfil\break
\noindent b) Let $f\ge 2$ 
and $q^f\in\cal D_q.$ 
Note that $\rho(q)=\ord_q(3)$. If $q$ is not
Mirimanoff, then $\rho(q^f)\le q^f/q<q^f/2$ and $q^f$
is a non-value. If $q$ is not Artin, then 
$\rho(q^f)\le q^{f-1}\rho(q)\le q^f/2$ and again $q^f$
is a non-value.
\par Now assume that $q$ is Artin and not Mirimanoff.
By Lemma \ref{prop5n}, if $q^f=D_q(n),$ then $$q^f\ge \frac{qn}{q-1}.$$ Therefore, assuming
$$\frac{q}{q-1}(2^e+1)> q^f,$$ we obtain $n\le 2^e,$ which means that $D_q(n)\le 2^e<q^f,$ contradiction. Conversely, 
$D_q(q^{f-1}(q-1))\le q^f$
by Lemma \ref{prop5n}. Since $ q^{f-1}(q-1)\ge 2^e+1,$ we infer that neither $2^e$ nor $2^{e+1}>q^f$ can be a discriminator for $q^{f-1}(q-1)$ and hence $q^f\in\cal D_q.$ 
\end{proof}

The following proposition gives a 
reformulation of the inequality
\eqref{ciolan} which is computationally very
easy to work with.
\begin{Prop}
\label{allowedexponent}
Let $q\ge 5$ be a prime. Put
$${\mathcal F}_q=\Big\{b\ge 1:\Big\{b\frac{\log q}{\log 2}\Big\}>\frac{\log(q/(q-1))}{\log 2}\Big\}.$$
The set ${\mathcal F}_q$ is the set of integers $b\ge 1$ for which there is an integer
$e$ such that
\begin{equation}
\label{weetniet}
{q\over q-1}(2^e+1)\le q^b<2^{e+1}.
\end{equation}
\indent Alternatively, it is the set of integers 
$b\ge 1$ such that the interval  $[(q-1)q^{b-1},q^b]$ does not
contain a power of $2$.
\end{Prop}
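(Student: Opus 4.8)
The plan is to prove the two characterizations of $\mathcal{F}_q$ given in Proposition \ref{allowedexponent} by showing that each is equivalent to the defining fractional-part condition. I would first establish the equivalence between the fractional-part condition and the existence of an exponent $e$ satisfying \eqref{weetniet}, and then separately reconcile \eqref{weetniet} with the ``no power of $2$ in $[(q-1)q^{b-1},q^b]$'' formulation. The key observation throughout is that taking logarithms base $2$ converts multiplicative statements about powers of $q$ relative to powers of $2$ into additive statements about $b\log q/\log 2$ and its integer and fractional parts.

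For the first equivalence, fix $b\ge 1$ and write $b\log q/\log 2 = N + \theta$ with $N=\lfloor b\log q/\log 2\rfloor$ and $\theta=\{b\log q/\log 2\}$, so that $2^N \le q^b < 2^{N+1}$; note $q^b$ is never a power of $2$, so $\theta\in(0,1)$. The natural candidate for the exponent in \eqref{weetniet} is $e=N$, since the right-hand inequality $q^b<2^{e+1}$ forces $e\ge N$ while the left-hand inequality $\tfrac{q}{q-1}(2^e+1)\le q^b$ forces $2^e < q^b$ and hence $e\le N$. Thus \eqref{weetniet} can hold only for $e=N$, and I would reduce it to the single condition $\tfrac{q}{q-1}(2^N+1)\le q^b$. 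Ignoring the additive $+1$ (which is a lower-order nuisance I expect to handle by a short estimate, using that $q^b=2^{N+\theta}$ dominates it comfortably), this says $\tfrac{q}{q-1}2^N \lesssim q^b = 2^N\cdot 2^{\theta}$, i.e.\ $2^\theta \ge \tfrac{q}{q-1}$, which upon taking $\log_2$ is precisely $\theta > \log(q/(q-1))/\log 2$, the defining condition for $b\in\mathcal{F}_q$.

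For the ``interval contains no power of $2$'' reformulation, I would argue directly. The interval $[(q-1)q^{b-1}, q^b]$ contains a power of $2$ exactly when there is an integer $e$ with $(q-1)q^{b-1}\le 2^e \le q^b$. Rewriting the left endpoint as $\tfrac{q-1}{q}q^b$, this is the condition $\tfrac{q-1}{q}q^b \le 2^e \le q^b$, and negating it (so that no power of $2$ lies in the interval) should match the condition $2^\theta \ge q/(q-1)$ obtained above, after again pinning down that the only relevant $e$ is $e=N$ and that the endpoints are never themselves powers of $2$ (since $q$ is odd). Checking that the two boundary conventions line up — strict versus non-strict inequalities, and whether the endpoint $q^b$ or $(q-1)q^{b-1}$ can coincide with a power of $2$ — is where the bookkeeping must be done carefully.

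The main obstacle I anticipate is not conceptual but a matter of keeping the inequalities exact: the defining condition uses a strict inequality $\{b\log q/\log 2\}>\log(q/(q-1))/\log 2$, whereas \eqref{weetniet} has a non-strict $\le$ on the left and a strict $<$ on the right, and the interval formulation has its own boundary behavior. I would resolve this by exploiting that $q$ is odd and $q\ge 5$, so that $q^b$ and $(q-1)q^{b-1}$ are never powers of $2$ and $\{b\log q/\log 2\}$ is irrational hence never equal to the threshold; this makes all the inequalities effectively strict and removes any ambiguity. The harmless additive constant $+1$ appearing in \eqref{weetniet} versus its absence in the interval formulation is the one genuinely delicate point, and I would dispose of it with an explicit inequality showing that for $q\ge 5$ the term $2^e+1$ behaves, relative to $q^b$, exactly like $2^e$ for the purposes of comparison.
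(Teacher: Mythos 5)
Your overall strategy coincides with the paper's: take base-$2$ logarithms, show that \eqref{weetniet} can only hold with $e=N:=\lfloor b\log q/\log 2\rfloor$, and compare the fractional part with the threshold; your treatment of the interval formulation is also essentially the paper's. The genuine gap sits exactly at the point you flag as delicate: the passage between $\tfrac{q}{q-1}2^N<q^b$ (which encodes $b\in\mathcal{F}_q$) and $\tfrac{q}{q-1}(2^N+1)\le q^b$ (the left half of \eqref{weetniet}) is not an estimate but an integrality statement, and no ``short estimate using that $q^b$ dominates comfortably'' can prove it. Both conditions are equivalent to integer inequalities: $\tfrac{q}{q-1}2^N<q^b$ says $2^N<(q-1)q^{b-1}$, and since both sides are integers this holds if and only if $2^N+1\le (q-1)q^{b-1}$, which is exactly $\tfrac{q}{q-1}(2^N+1)\le q^b$. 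This one-line integrality argument is what the paper tacitly uses when it asserts that \eqref{weetniet} is equivalent to $2^eq/(q-1)<q^b<2^{e+1}$; an analytic bound cannot replace it, because the claim is precisely that $(q-1)q^{b-1}-2^N$, when positive, is at least $1$ --- a fact about integers, invisible to soft size comparisons.

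Moreover, the two facts you invoke to dismiss the boundary bookkeeping are false, and they fail in cases the paper explicitly cares about. If $q$ is a Fermat prime and $b=1$ (e.g.\ $q=5$ or $q=17$), then the left endpoint $(q-1)q^{b-1}=q-1$ \emph{is} a power of $2$, and $\{\log q/\log 2\}=\log_2\bigl(q/(q-1)\bigr)$ equals the threshold exactly: irrationality of the fractional part does not prevent it from equaling another irrational number. In this case the strict inequality in the definition of $\mathcal{F}_q$ is decisive: $1\notin\mathcal{F}_5$, matching the facts that $[4,5]$ contains the power $4$ and that, per \eqref{weetniet}, $\tfrac{5}{4}(2^2+1)=\tfrac{25}{4}>5$. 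So the $+1$ in \eqref{weetniet} is not a nuisance to be estimated away; it is exactly what converts the strict fractional-part inequality into a non-strict one, via integrality. Once that step replaces both your irrationality appeal and your estimate, the rest of your argument (forcing $e=N$, and identifying ``no power of $2$ in $[(q-1)q^{b-1},q^b]$'' with $2^N<(q-1)q^{b-1}$) is correct and agrees with the paper's proof.
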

\begin{proof} 
The inequality \eqref{weetniet} is equivalent with $2^eq/(q-1)<q^b<2^{e+1}$. By taking logarithms and
easy manipulations this is seen to be equivalent with
$$\frac{\log(q/(q-1))}{\log 2}<f\frac{\log q}{\log 2}-e<1.$$
This inequality can only be satisfied if we take
$e=\lfloor f\log q/\log 2\rfloor$. We are left with the inequality
$$\Big\{f\frac{\log q}{\log 2}\Big\}>\frac{\log(q/(q-1))}{\log 2},$$
which finishes the proof.
\par For the second assertion, we 
let  ${\mathcal G}$ 
be the set of exponents $k\ge 1$ such that $(q-1)q^{g-1}\leq 2^k \leq q^g$ for some 
integer $g\ge 1$.
We have to show that $\mathcal G$ is the complement of ${\mathcal F}_q$ in the natural integers.
Note that $g$ is in ${\mathcal G}$ iff $\log(q-1) + (g-1)\log q \leq k \log 2 \leq g\log q$, that
is, iff
$\log(q-1)/\log 2+(g-1) \alpha\leq k \leq g \alpha,$
where $\alpha=\log q/\log 2$. Since $k$ is an integer, we may replace $g\alpha$ by $\lfloor g\alpha\rfloor$ and
the condition becomes $k\in [\lfloor g\alpha\rfloor+\{g\alpha\}+\log(q-1)/\log 2-\alpha,\lfloor g\alpha\rfloor]$. Note that there can be only
one integer $k$ in this interval iff $\{g\alpha\}\le \log(q/(q-1))/\log 2$.
\end{proof}

\indent The reader might wonder how sparse the
set ${\mathcal F}_q$ is. The following
result gives an asymptotic answer.
\begin {Prop}
\label{iza}
As $x\to\infty,$ we have
$$\#\{b\in {\mathcal F}_q:b\le x\}\sim \frac{\log(2(q-1)/q)}{\log 2} x.$$  
 \end{Prop}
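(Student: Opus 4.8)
The plan is to recognize that membership in $\mathcal{F}_q$ is governed purely by the distribution of the fractional parts $\{b\alpha\}$, where $\alpha=\log q/\log 2$, and then to invoke Weyl's equidistribution theorem. Writing $c=\log(q/(q-1))/\log 2$, the very definition of $\mathcal{F}_q$ gives
\[
\#\{b\in\mathcal{F}_q:b\le x\}=\#\{1\le b\le x:\{b\alpha\}>c\}.
\]
Thus the whole problem reduces to counting how often the orbit $(\{b\alpha\})_{b\ge 1}$ lands in the interval $(c,1)$.

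First I would check that $\alpha=\log q/\log 2=\log_2 q$ is irrational. Indeed, were $\log_2 q=r/s$ with $r,s$ positive integers, then $q^s=2^r$, which is impossible since $q\ge 5$ is an odd prime and hence not a power of $2$. With $\alpha$ irrational, Weyl's theorem tells us that the sequence $(\{b\alpha\})_{b\ge 1}$ is equidistributed modulo $1$. Applying this to the indicator function of the interval $(c,1)\subseteq[0,1)$, whose Lebesgue measure is $1-c$, yields
\[
\#\{1\le b\le x:\{b\alpha\}>c\}\sim (1-c)\,x\qquad(x\to\infty).
\]
Because $(c,1)$ is a single subinterval, one may equally appeal to the classical discrepancy bound for $(\{b\alpha\})$; equidistribution is all that is needed for the leading asymptotic.

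It remains only to simplify the constant. Since
\[
1-c=1-\frac{\log(q/(q-1))}{\log 2}=\frac{\log 2-\log(q/(q-1))}{\log 2}=\frac{\log(2(q-1)/q)}{\log 2},
\]
we obtain exactly $\#\{b\in\mathcal{F}_q:b\le x\}\sim\frac{\log(2(q-1)/q)}{\log 2}\,x$, as claimed; note that $1<2(q-1)/q<2$ for $q\ge 5$, so the density lies strictly between $0$ and $1$, consistent with $0<c<1$. There is essentially no genuine obstacle here beyond the irrationality check: the statement is a direct corollary of equidistribution, and the only care needed is the elementary verification that $\log_2 q\notin\mathbb{Q}$, which fails only when $q$ is a power of $2$.
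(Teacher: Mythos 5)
Your proof is correct and follows essentially the same route as the paper: both reduce the statement to Weyl's equidistribution criterion applied to $\{b\alpha\}$ with $\alpha=\log q/\log 2$ and cutoff $\beta=\log(q/(q-1))/\log 2$, after noting that $\alpha$ is irrational. Your write-up merely makes explicit the irrationality argument ($q^s=2^r$ is impossible) and the simplification of the constant, both of which the paper leaves to the reader.
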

 \begin{proof}
 It is easy to see that $\log q/\log 2$ is
 irrational.
 Now it is a consequence of Weyl's criterion 
that, 
for a fixed $0<\beta<1$ and an irrational 
$\alpha,$ we have
$$\#\{g\le x:\{g\alpha\}> \beta\}\sim (1-\beta) x,~x\rightarrow \infty.$$
To conclude, apply this result 
with $\alpha=\log q/\log 2$
and $\beta=\log(q/(q-1))/\log 2$.
\end{proof}
\noindent {\tt Remark}. Note that the proportionality constant in
Proposition \ref{iza} satisfies 
$$ \frac{\log(2(q-1)/q)}{\log 2}=1-\frac{1}{q\log 2}+O\left(\frac{1}{q^2}\right),~q\rightarrow 
\infty.$$

\noindent {\tt Commentary}. Lemma \ref{valuenotFermat} is new.
In Lemma \ref{prop5n} it is crucial to have
$\nu_q(3^{q-1}-1)=1$, that is, we need to have that $q$ is not Mirimanoff.
The proof also hinges on $u_q(1),\ldots,u_q(q-1)$ being pairwise distinct modulo
$q$, which happens iff $q$ is Artin. With these assumptions
on $q,$ the earlier proof for $q=5$ generalizes. 
Proposition \ref{iza} is a very straightforward 
generalization of \cite[Proposition 1]{MZ}.

\subsection{$D_q(n)$ is a power of $2$ or $q$}
\label{primesets}
Put 
$${\cal P}=\{p>3:p\ne q\text{~and~}\ord_p(9)=(p-1)/2\}.$$ 
Let 
$${\cal P}_j=\{p>3:p\ne q,~p\equiv j\pmod*4\text{~and~}\ord_p(3)=p-1\},~j\in \{1,3\},$$ 
and $${\cal P}_2=\{p>3:p\ne q,~p\equiv 3\pmod*4\text{~and~}\ord_p(3)=(p-1)/2\}.$$
We have
$${\cal P}_1=\{5,17,29,53,89,101,113,137,149,173,197,233,257,269,281,293,\ldots\},$$
$${\cal P}_2=\{11,23,47,59,71,83,107,131,167,179,191,227,239,251,263,\ldots\},$$
$${\cal P}_3=\{7,19,31,43,79,127,139,163,199,211,223,283,
\ldots\},$$
where, for any fixed $q$, if any of the primes listed equals $q,$ it has
to be removed from the corresponding set.
By (\ref{lcmorder}) we have $2\ord_p(9)=\lcm(2,\ord_p(3))$, from
where we infer that ${\cal P}={\cal P}_1 \cup {\cal P}_2 \cup {\cal P}_3$.
If a prime $p>3$ is a Browkin-S\u al\u ajan value, then by Lemma \ref{redpp} we must have $p\in {\cal P}$.
If $p\in {\cal P}$, then by Theorem \ref{generalperiod} we have $\rho(p)=p-1$. This will be used a few times in the
sequel.\\
\indent The aim of this section is to establish
the following result, the proof of which makes use of properties of the incongruence index and is
given in Section \ref{5.4.1}.
\begin{Prop}
\label{reducetoprime}
Let $d>1$ be an integer coprime to $2q$. If $d$ is a Browkin-S\u al\u ajan value, then 
$d\in {\cal P}$.
\end{Prop}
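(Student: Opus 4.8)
The plan is to reduce to a prime power via the results already established, and then to rule out nontrivial exponents using the incongruence index. Since $d$ is a value, Lemma \ref{notdrie} gives $3\nmid d$; together with $(d,2q)=1$ this yields $(d,6q)=1$. By Lemma \ref{redpp} a value exceeding $1$ is a prime power $p^m$ with $p=2$ or $p>3$, and coprimality to $6q$ forces $p>3$ and $p\ne q$; the same lemma gives $\ord_p(9)=(p-1)/2$, so that $p\in{\cal P}$. It therefore remains only to show that $m=1$, i.e. that no proper prime power $p^m$ with $m\ge 2$ and $p\in{\cal P}$ is a value. This last point is the entire content of the proposition.

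To exclude $m\ge 2$ I would use the incongruence index $\iota_q$ together with the elementary criterion that $\iota_q(d)\le d/2$ forces $d$ to be a non-value: if $p^m=D_q(n)$, then $n\le\iota_q(p^m)$ while $p^m\le 2n-1$ by Corollary \ref{bertie}, so $\iota_q(p^m)\ge (p^m+1)/2$. The argument then splits into two independent parts. The first is a lifting lemma: if a prime $p\in{\cal P}$ satisfies $\iota_q(p)<\rho_q(p)=p-1$, then $\iota_q(p^m)\le p^m/2$ for every $m\ge 2$, so that $p^m$ is a non-value. The second is the uniform statement that \emph{every} $p\in{\cal P}$ does satisfy $\iota_q(p)<\rho_q(p)$. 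Granting both, the proposition is immediate: $p\in{\cal P}$ supplies $\iota_q(p)<\rho_q(p)$, the lifting lemma then kills all $p^m$ with $m\ge 2$, and we are left with $d=p\in{\cal P}$.

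For the lifting lemma I would exploit the explicit shape of collisions. Comparing two terms of opposite parity, $u_q(i)\equiv u_q(j)\pmod*{p^m}$ is equivalent to $3^{j}-3^{i}\equiv\pm 2q^*\pmod*{p^m}$, whereas equal-parity comparisons reduce to powers of $9$ and, since $\ord_p(9)=(p-1)/2$, contribute no collision among $u_q(1),\dots,u_q(p-1)$. A collision modulo $p$ at indices $(a,b)$ with $b\le p-1$, furnished by $\iota_q(p)<\rho_q(p)$, can be propagated by translating both indices through multiples of the period $p-1$; here Lemma \ref{Beyl} and the non-Mirimanoff property from Lemma \ref{redpp} are essential, since they give $\nu_p(3^{p-1}-1)=1$ and hence that the $1$-units modulo $p^m$ are exactly $\langle 9^{(p-1)/2}\rangle$. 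Writing $i=b+\beta(p-1)$ and $j=a+\alpha(p-1)$, the congruence modulo $p^m$ becomes an affine relation $\beta\equiv 3^{a-b}\alpha+\text{const}\pmod*{p^{m-1}}$ between the block indices, whose slope $3^{a-b}$ is typically nontrivial, the degenerate case $\ord_p(3)\mid(a-b)$ being handled separately. Equidistribution of such an affine map then produces a solution with both $\alpha$ and $\beta$ of size $O(p^{(m-1)/2})$, placing the first lifted collision well below $p^m/2$.

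The main obstacle is the second part: proving $\iota_q(p)<\rho_q(p)$ for every $p\in{\cal P}$. In one period the odd-indexed residues fill $O=\frac{q^*}{4}+\frac34 H$ and the even-indexed residues fill $E=-\frac{q^*}{4}+\frac14 H$, where $H=\langle 9\rangle$ is the group of quadratic residues modulo $p$; hence an early collision exists precisely when $O\cap E\ne\emptyset$, that is, when $h_2-3h_1\equiv 2q^*\pmod*{p}$ has a solution with $h_1,h_2\in H$ (the residue $2q^*$ modulo $p$ being determined by $q$). I expect this to be delicate: for large $p$ it follows from a standard character-sum (Weil) estimate, which yields $(p-1)^2/(4p)+O(\sqrt p)>0$ representations, but the finitely many small primes in ${\cal P}$, together with the finitely many admissible residues $2q^*$ modulo each such $p$, will have to be checked directly. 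This finite verification, streamlined through the universal incongruence index, is what the detailed treatment is designed to handle.
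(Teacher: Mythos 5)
Your overall skeleton is exactly the paper's: reduce to $d=p^m$ with $p\in{\cal P}$ via Lemmas \ref{notdrie} and \ref{redpp}, then exclude $m\ge 2$ by combining a lifting statement with the uniform statement that $\iota_q(p)<\rho_q(p)$ for every $p\in{\cal P}$. Indeed, the paper's own proof is three lines citing Lemma \ref{redpp}, Lemma \ref{remainder} and Lemma \ref{18}, all established before the Proposition, and you correctly identified these two ingredients as the entire content. The genuine gap is in your re-derivation of the lifting step. First, the relation between block indices is not affine modulo $p^{m-1}$: writing $u=3^{(p-1)\alpha}$ and $v=3^{(p-1)\beta}$, the collision condition reads $3^au-3^bv\equiv c\pmod{p^m}$, so $v$ is an affine function of $u$, but $\beta$ is a discrete logarithm of that affine expression and is \emph{not} an affine function of $\alpha$; only the mod-$p$ linearization has slope $3^{a-b}$. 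Second, even granting an affine relation $\beta\equiv s\alpha+c_0\pmod{p^{m-1}}$, the claim that equidistribution produces a solution with $\alpha,\beta=O(p^{(m-1)/2})$ is false for inhomogeneous relations: for $s=1$ and $c_0$ near $p^{m-1}/2$ the solution set avoids the box $[0,p^{m-1}/2]^2$ entirely, and you have no control over the slope here. The paper's Lemma \ref{indelift} avoids all of this by lifting one exponent at a time: when $\rho(p^{m+1})=p\rho(p^m)$, the two orbits $\{u_q(i_1+j\rho(p^m)):0\le j\le p-1\}$ and $\{u_q(j_1+j\rho(p^m)):0\le j\le p-1\}$ coincide as $p$-element sets modulo $p^{m+1}$, so their halves with $j\le p/2$, each of size $(p+1)/2$, must intersect, giving a collision below $(p+1)\varphi(p^m)/2<p^{m+1}/2$ with no equidistribution input. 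You should replace your one-shot argument by this stepwise pigeonhole (or simply cite Lemma \ref{18}).

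For the second ingredient your route genuinely differs from the paper's, and is incomplete as written. The paper's Lemma \ref{remainder} proves $\iota_q(p)<\rho_q(p)$ for \emph{all} $p\in{\cal P}$ and all $q$ by a uniform, purely elementary quadratic-residue case analysis (splitting into ${\cal P}_1\cup{\cal P}_3$ and ${\cal P}_2$, with the degenerate residues $q^*\equiv -1,\,1/3\pmod{p}$ handled by explicit collisions such as $u_q(1)\equiv u_q(p-1)$), with no computation and no size threshold. Your proposal — a Weil/character-sum bound giving $p/4+O(\sqrt p)$ solutions of $h_2-3h_1\equiv 2q^*\pmod{p}$ with $h_1,h_2$ squares for large $p$, plus a finite verification over the small $p\in{\cal P}$ and all admissible residues $2q^*\bmod p$ — would in fact succeed (the finite check does go through, as the paper's lemma shows a posteriori), but you defer precisely that check as "delicate," so the argument is not complete as it stands; moreover it buys nothing over the paper's treatment except trading a short case analysis for a computation and an explicit Weil threshold.
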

\subsubsection{The incongruence index}
\begin{Def}
Let $\{v_j\}_{j=1}^{\infty}$ be a sequence of integers and $m$ an integer. Then $\iota_v(m)$, the incongruence index  of $v$ modulo $m,$ is
the largest number $k$ such that $v_1,\ldots,v_{k}$ are pairwise incongruent modulo $m$.
\end{Def}
Note that $\iota_v(m)\le m$. In case the sequence $v$ is purely periodic
modulo $d$, we have $\iota_v(d)\le \rho_v(d)$. A minor change in the proof
of Lemma \ref{gee} yields the following result.
\begin{Lem}
\label{gee2}
Assume that $D_v(n)\le g(n)$ for every $n\ge 1$ with $g$ non-decreasing.
If $d>g(\iota_v(d))$, then $d$ is a $D_v$-non-value.
\end{Lem}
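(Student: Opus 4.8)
The plan is to mimic the proof of Lemma \ref{gee} almost verbatim, replacing the periodicity input by a direct appeal to the incongruence index. Suppose, for a contradiction, that $d$ is a $D_v$-value, so that $D_v(n)=d$ for some $n\ge 1$.

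The one new observation needed is that the discriminator condition forces $n\le \iota_v(d)$. Indeed, by the very definition of $D_v$, if $D_v(n)=d$ then $v_1,\ldots,v_n$ are pairwise incongruent modulo $d$. By the definition of the incongruence index as the largest $k$ for which $v_1,\ldots,v_k$ are pairwise incongruent modulo $d$, this immediately gives $n\le \iota_v(d)$. This is precisely the analogue of the inequality $\rho_v(d)\ge n$ used in Lemma \ref{gee}, but it does not require the sequence to be purely periodic modulo $d$; hence the hypothesis of pure periodicity can be dropped.

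From here the argument is identical. Using that $g$ is non-decreasing together with $n\le \iota_v(d)$, and the standing hypothesis $D_v(n)\le g(n)$, I obtain $d=D_v(n)\le g(n)\le g(\iota_v(d))<d$, where the last inequality is the assumption $d>g(\iota_v(d))$. This contradiction shows that $d$ cannot be a $D_v$-value. I do not anticipate any genuine obstacle: the only content is recognizing that $n\le\iota_v(d)$ takes over the role of $n\le\rho_v(d)$. In fact, since $\iota_v(d)\le\rho_v(d)$ whenever the sequence is purely periodic modulo $d$, and $g$ is non-decreasing, the present lemma is a strengthening of Lemma \ref{gee} in that setting, the hypothesis $g(\rho_v(d))<d$ implying $g(\iota_v(d))<d$.
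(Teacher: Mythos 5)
Your proof is correct and is exactly the ``minor change'' to the proof of Lemma \ref{gee} that the paper intends: the inequality $n\le\iota_v(d)$, which follows directly from the definitions of $D_v$ and $\iota_v$, replaces $n\le\rho_v(d)$, and the contradiction $d=D_v(n)\le g(n)\le g(\iota_v(d))<d$ goes through verbatim. Your closing remark that this version needs no periodicity hypothesis and strengthens Lemma \ref{gee} in the purely periodic case is also accurate.
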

\par For the Browkin-S\u al\u ajan sequence $u_q$ we write $\iota_q(d)$ to highlight the dependence on $q.$ However, whenever the dependence on $q$ does not play a role, we will write $\iota(d)$ for simplicity. 
\par A minor variation of the proof of Lemma \ref{nonnie} gives the following result, which will be of vital importance in order to discard possible Browkin-S\u al\u ajan values.
\begin{Lem}
\label{nonnie2}
If $\iota(d)\le d/2$, then $d$ is a Browkin-S\u al\u ajan non-value.
\end{Lem}

\subsubsection{The incongruence index for $q^2$}
\label{qkwadraat}
In this section we consider the incongruence index 
for $q^2$. The result and its
corollary are not used in the sequel, but
shed some light on the behaviour of 
the incongruence index.
\par We define
$${\cal Q}=\{q>5:q\equiv 3\pmod*4\text{~and~}\ord_q(3)=(q-1)/2\},$$  and write 
$$\alpha_q=\frac{3^{(q-1)/2}-1}{q}.$$
Note that, if $q\in {\cal Q}$, then $\alpha_q$ is an integer and $q^*=-q$.
\begin{Lem}
\label{q^2}
Let $q\in {\cal Q}$ and 
suppose that $3^{(q-1)/2}\not\equiv 1\pmod*{q^2}$.\\
{\rm a)} If $2\alpha_q$ is a quadratic residue modulo $q,$ then
there exists a smallest integer $1\le m\le (q-1)/2$ such that 
$9^m\equiv 2/\alpha_q\pmod*q$ has a solution.
We have $u_q(2m)\equiv u_q(2m+(q-1)/2)\pmod*{q^2}$ and
$\iota_q(q^2)=2m-1+(q-1)/2$.\\
{\rm b)} If $2\alpha_q$ is a quadratic non-residue modulo $q,$ then
there exists a smallest integer $1\le m\le (q-1)/2$ such that 
$9^m\equiv -6/\alpha_q\pmod*q$ has a solution.
We have $u_q(2m-1)\equiv u_q(2m-1+(q-1)/2)\pmod*{q^2}$ 
and
$\iota_q(q^2)=2m-2+(q-1)/2$.
\end{Lem}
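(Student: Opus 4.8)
The plan is to locate the first repeated value of the sequence modulo $q^2$ and read off $\iota_q(q^2)$ as the index just before it. Since $q\equiv 3\pmod 4$ we have $q^*=-q$, so $u_q(j)=(3^j+q(-1)^j)/4$ and modulo $q$ we get $u_q(j)\equiv 3^j/4$. As $\ord_q(3)=(q-1)/2$, two terms can coincide modulo $q^2$ only if their indices differ by a multiple of $(q-1)/2$. First I would dispose of collisions between indices of \emph{equal} parity: these are governed by consecutive powers of $9$ as in Lemma \ref{oneventje}, and using the hypothesis $3^{(q-1)/2}\not\equiv 1\pmod{q^2}$ together with Lemma \ref{Beyl} one finds $\ord_{q^2}(9)=q(q-1)/2$, so an equal-parity collision forces an index gap of at least $q(q-1)$. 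Hence every sufficiently early collision joins indices of opposite parity, and since $(q-1)/2$ is odd (because $q\equiv 3\pmod 4$) the minimal admissible gap $(q-1)/2$ does flip parity.

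Next I would compute the opposite-parity collision condition at the minimal gap. Writing $3^{(q-1)/2}=1+q\alpha_q$, a direct expansion gives
$$u_q\!\left(r+\tfrac{q-1}{2}\right)-u_q(r)=\frac{q\bigl(3^r\alpha_q-2(-1)^r\bigr)}{4},$$
so that $u_q(r)\equiv u_q(r+(q-1)/2)\pmod{q^2}$ if and only if $3^r\alpha_q\equiv 2(-1)^r\pmod q$. Splitting on the parity of $r$, the even case $r=2m$ becomes $9^m\equiv 2/\alpha_q\pmod q$ and the odd case $r=2m-1$ becomes $9^m\equiv -6/\alpha_q\pmod q$, exactly the two congruences in the statement.

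The key dichotomy is then a quadratic-residue computation. Because $\ord_q(3)=(q-1)/2$ is odd, the element $9=3^2$ has order $(q-1)/2$ and generates the group of squares modulo $q$; thus $9^m\equiv c$ is solvable (with a unique smallest $m$ in $[1,(q-1)/2]$) precisely when $c$ is a quadratic residue. Now $2/\alpha_q$ is a residue iff $2\alpha_q$ is, and $-6/\alpha_q$ is a residue iff $-6\alpha_q$ is; moreover
$$\left(\frac{-6\alpha_q}{q}\right)=\left(\frac{-3}{q}\right)\left(\frac{2\alpha_q}{q}\right)=-\left(\frac{2\alpha_q}{q}\right),$$
since $q\equiv 3\pmod 4$ forces $\left(\tfrac{-1}{q}\right)=-1$ while $3$ is a residue (as $3^{(q-1)/2}\equiv 1\pmod q$). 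Hence exactly one of the two congruences is solvable: the even one when $2\alpha_q$ is a residue (case a)) and the odd one when it is a non-residue (case b)).

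Finally I would conclude by bookkeeping on indices. Let $r^{*}$ be the smallest index in the active parity class satisfying the relevant congruence; solvability guarantees $r^{*}$ exists with $r^{*}\le q-1$. Then $u_q(r^{*})\equiv u_q(r^{*}+(q-1)/2)\pmod{q^2}$, while no collision can occur earlier: equal-parity collisions are excluded by the gap bound $q(q-1)$; opposite-parity collisions at the minimal gap $(q-1)/2$ with a smaller larger-index are excluded by the minimality of $r^{*}$ together with the dichotomy ruling out the inactive parity; and collisions at gaps $3(q-1)/2,5(q-1)/2,\dots$ have larger index exceeding $r^{*}+(q-1)/2$, because $r^{*}+(q-1)/2\le 3(q-1)/2$. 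Therefore the first repetition occurs at index $r^{*}+(q-1)/2$, whence $\iota_q(q^2)=r^{*}+(q-1)/2-1$, which equals $2m-1+(q-1)/2$ when $r^{*}=2m$ and $2m-2+(q-1)/2$ when $r^{*}=2m-1$. The main obstacle is exactly this last step: verifying that the minimal-gap opposite-parity collision genuinely precedes every other collision, which is what ties the quadratic-residue dichotomy to the precise value of the incongruence index.
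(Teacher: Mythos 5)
Your proof is correct and takes essentially the same route as the paper's: reduction modulo $q$ forces collision gaps to be multiples of $\ord_q(3)=(q-1)/2$, the expansion $3^{(q-1)/2}=1+q\alpha_q$ yields the collision criterion $3^r\alpha_q\equiv 2(-1)^r\pmod*{q}$, the identity $\bigl(\tfrac{-6\alpha_q}{q}\bigr)=-\bigl(\tfrac{2\alpha_q}{q}\bigr)$ gives the parity dichotomy, and the fact that $9$ generates the squares modulo $q$ supplies the minimal $m$. If anything, your up-front exclusion of equal-parity collisions via $\ord_{q^2}(9)=q(q-1)/2$ handles the gap-$(q-1)$ case (the paper's Case 1) more transparently than the paper does, since it makes explicit the contradiction with the hypothesis $3^{(q-1)/2}\not\equiv 1\pmod*{q^2}$.
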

\begin{cor}
If $q\in {\cal Q}$ and $3^{(q-1)/2}\not\equiv 1\pmod*{q^2}$, 
then $\iota_q(q^2)\le 3(q-1)/2-1<q^2/2$.
\end{cor}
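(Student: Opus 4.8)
The plan is to deduce the bound immediately from Lemma~\ref{q^2}, which already supplies an exact value for $\iota_q(q^2)$ in each of its two cases, together with the a~priori constraint $1\le m\le (q-1)/2$ on the integer $m$ that appears there. Both expressions for $\iota_q(q^2)$ are linear and increasing in $m$, so I would simply substitute the largest admissible value $m=(q-1)/2$; this single substitution drives the whole estimate, and there is no serious obstacle beyond checking the two cases and the final elementary inequality.

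First I would treat case a), where $\iota_q(q^2)=2m-1+(q-1)/2$. From $m\le (q-1)/2$ one gets $2m-1\le q-2$, hence
$$\iota_q(q^2)\le (q-2)+\frac{q-1}{2}=\frac{3q-5}{2}=\frac{3(q-1)}{2}-1.$$
Next, in case b), where $\iota_q(q^2)=2m-2+(q-1)/2$, the same substitution gives
$$\iota_q(q^2)\le (q-3)+\frac{q-1}{2}=\frac{3q-7}{2}=\frac{3(q-1)}{2}-2,$$
which is strictly smaller. Since the hypotheses $q\in{\cal Q}$ and $3^{(q-1)/2}\not\equiv 1\pmod*{q^2}$ place us in exactly one of these two cases, I would conclude that $\iota_q(q^2)\le 3(q-1)/2-1$ in all cases.

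It then remains to check the strict inequality $3(q-1)/2-1<q^2/2$. Rewriting it as $(3q-5)/2<q^2/2$, this is just $q^2-3q+5>0$, which holds for every real $q$ (the discriminant $9-20$ being negative) and a~fortiori for the primes $q>5$ under consideration. The only point that needs genuine care is the appeal to Lemma~\ref{q^2} itself: one must be sure its hypotheses are met so that the exact formulae for $\iota_q(q^2)$ are available, but these are precisely the standing assumptions of the corollary, so nothing further is required.
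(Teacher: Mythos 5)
Your proposal is correct and is precisely the intended argument: the paper states this corollary as an immediate consequence of Lemma~\ref{q^2}, obtained exactly as you do by bounding $m\le (q-1)/2$ in the two exact formulae $\iota_q(q^2)=2m-1+(q-1)/2$ and $\iota_q(q^2)=2m-2+(q-1)/2$, and then checking the elementary inequality $3(q-1)/2-1<q^2/2$. Nothing further is needed.
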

\begin{proof}[Proof of Lemma \ref{q^2}] Our argument uses that $q\nmid \alpha_q,$ which is a consequence of the assumption that 
$3^{(q-1)/2}\not\equiv 1\pmod*{q^2}$.\\
\indent For part a) we have to show that
$3^{2m}+q\equiv 3^{2m+(q-1)/2}-q\pmod*{4q^2}$. Since the congruence
clearly holds modulo $4$, it is enough to show that it holds modulo $q^2$; 
in other words, it is enough to show that
$2q\equiv 9^m\alpha_q q\pmod*{q^2}$. That this holds is a consequence
of the identity $2\equiv 9^m\alpha_q\pmod*q$. Our assumption on $q$ implies
that $\ord_q(9)=(q-1)/2$. Thus the subgroup of $(\mathbb Z/q\mathbb Z)^*$ generated
by $9$ is the subgroup of all squares. Since by assumption $2\alpha_q$ is a 
quadratic residue modulo $q$, so is $2/\alpha_q$ and hence there is a smallest
integer $1\le m\le (q-1)/2$ such that  $2\equiv 9^m\alpha_q\pmod*q$. 
We thus conclude that $\iota_q(q^2)\le 2m-1+(q-1)/2$. In order to establish
equality we notice that, if $r$ 
is the smallest number such that
$u_q(k)\equiv u_q(r)\pmod*q$ for 
some $1\le k<r$, then for general $q$ we have
$k\equiv r\pmod*{\ord_q(3)}$, and thus for our choice of $q$ we must
have $r\in \{k+(q-1)/2,k+(q-1),\ldots\}$. Since 
$r-1=\iota_q(q^2)\le 2m-1+(q-1)/2\le 3(q-1)/2-1$, we infer that $r=k+q-1$ or $r=k+(q-1)/2$. Two cases must be dealt with.\\
{\sc{Case 1.}} $r=k+q-1$.\\
Here $r$ and $k+q-1$ are of the same parity and we
must have $3^{q-1}\equiv 1\pmod*{q^2}$. In particular,
$u_q(1)\equiv u_q(q)\pmod*{q^2}$ and 
so $\iota_q(q^2)\le q-1$.\\ 
{\sc{Case 2.}} $r=k+(q-1)/2$.\\
Here $k$ and $r=k+(q-1)/2$ are of different parity.
If $k$ is odd, we can write it as $2v-1$ and then infer that 
$9^v\equiv -6/\alpha_q\pmod*q$, which has no solution as
$$\bigg(\frac{-6\alpha_q}{q}\bigg)=\bigg(\frac{-3}{q}\bigg)
\bigg(\frac{2\alpha_q}{q}\bigg)=-1.$$ Thus we
must have $k=2v$. We conclude that $\iota_q(q^2)=2v-1+(q-1)/2$, where 
$v$ is the smallest positive
integer such that $u_q(2v)\equiv u_q(2v+(q-1)/2)\pmod*{q^2}$, which 
yields $v=m$ as we have seen above.\\
\indent The proof of part b) is very similar and left to the interested reader.
\end{proof}

\subsubsection{Lifting from $p^m$ to $p^{m+1}$}
\begin{Lem}
\label{liftje}
Let $p>3$. Let $1\le t<m$. Then
\begin{equation}
\label{rhopm}
\rho(p^m)\,|\,\lcm(2,\rho(p^t))p^{m-t}
\end{equation}
and
\begin{enumerate}[\rm a)]
\item if $p^t\neq q$, then $\rho(p^m)\,|\,\rho(p^t)p^{m-t};$
\item if $p^m\neq q$, then either $\rho(p^{m+1})=\rho(p^m)$ or $\rho(p^{m+1})=p\rho(p^m);$
\item if $\rho(p^2)=p\rho(p)$, then $\rho(p^m)=p^{m-1}\rho(p)$ for $m\ge 2$.
\end{enumerate}

\end{Lem}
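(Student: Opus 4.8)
The plan is to pass from periods to multiplicative orders. For $p>3$ we have $\ord_{4p^j}(9)=\ord_{p^j}(9)$, so writing $D_j:=\ord_{p^j}(9)$, Theorem \ref{generalperiod} gives $\rho(p^j)=2D_j$ in every case, with the sole exception $p=q$, $j=1$, $\ord_q(3)$ odd, where instead $\rho(q)=\ord_q(3)=\ord_q(9)=D_1$, an \emph{odd} number. Since $q$ is prime, $p^j=q$ can occur only for $j=1$; hence whenever $j\ge 2$ we automatically have $\rho(p^j)=2D_j$. The whole lemma will follow once we control how the $D_j$ grow, so I would first record three facts, all consequences of Lemma \ref{Beyl} applied to $r=9^{D_t}$ (which satisfies $r\equiv 1\pmod* p$ and $r\ne -1$ because $p>3$).

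Taking $n=p^{m-t}$ in Lemma \ref{Beyl} and using $\nu_p(9^{D_t}-1)\ge t$ gives
$$\nu_p\big(9^{D_t\,p^{m-t}}-1\big)=(m-t)+\nu_p(9^{D_t}-1)\ge m,$$
and from this I extract: (i) $D_m\mid D_t\,p^{m-t}$ for $1\le t\le m$; (ii) applying (i) to the pair $(m,m+1)$ yields $D_{m+1}\mid pD_m$, which together with $D_m\mid D_{m+1}$ (reduction modulo $p^m$) and the primality of $p$ forces $D_{m+1}\in\{D_m,\,pD_m\}$; and (iii) if $D_2=pD_1$, then $9^{D_1}\not\equiv 1\pmod*{p^2}$, i.e.\ $\nu_p(9^{D_1}-1)=1$, whence Lemma \ref{Beyl} gives $\nu_p(9^{D_1k}-1)=\nu_p(k)+1$, so $\ord_{p^m}(9^{D_1})=p^{m-1}$; since $\ord_{p^m}(9^{D_1})=D_m/D_1$ this means $D_m=D_1p^{m-1}$ for all $m\ge 1$.

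With (i)--(iii) in hand the four assertions are short. In (\ref{rhopm}) and in part a) we have $m\ge 2$ (as $t<m$), so $p^m\ne q$ and $\rho(p^m)=2D_m$. Part a) assumes $p^t\ne q$, hence $\rho(p^t)=2D_t$, and (i) gives $2D_m\mid 2D_t\,p^{m-t}$, i.e.\ $\rho(p^m)\mid\rho(p^t)p^{m-t}$. For (\ref{rhopm}) the only extra case is $p^t=q$ with $\ord_q(3)$ odd, where $\rho(p^t)=D_1$ but $\lcm(2,\rho(p^t))=2D_1$, so the bound again reads $2D_1p^{m-1}$ and follows from (i). For part b), the hypothesis $p^m\ne q$ (hence also $p^{m+1}\ne q$) gives $\rho(p^m)=2D_m$ and $\rho(p^{m+1})=2D_{m+1}$, so (ii) yields $\rho(p^{m+1})\in\{\rho(p^m),\,p\rho(p^m)\}$.

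For part c), outside the exceptional case the equality $\rho(p^2)=p\rho(p)$ reads $2D_2=p\cdot 2D_1$, i.e.\ $D_2=pD_1$, and (iii) gives $\rho(p^m)=2D_m=p^{m-1}\cdot 2D_1=p^{m-1}\rho(p)$. In the exceptional case ($p=q$, $\ord_q(3)$ odd) the hypothesis of c) cannot even hold: $\rho(q)=D_1$ is odd while $\rho(q^2)=2D_2$ is even, so $\rho(q^2)=q\rho(q)$ (odd times odd, hence odd) is impossible, and the implication is vacuous. The main point requiring care is thus not the order-lifting (which is the routine content of (i)--(iii)) but the bookkeeping of the factor $2$ and of the lone exceptional value $\rho(q)$; the parity observation just made is exactly what neutralizes that exception in (\ref{rhopm}) and in c).
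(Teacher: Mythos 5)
Your proof is correct and follows essentially the same route as the paper's: both reduce period-lifting to order-lifting of $9$ modulo prime powers via Theorem \ref{generalperiod} and Lemma \ref{Beyl}, with the same parity bookkeeping around the single exceptional value $\rho(q)$. The only difference is presentational: the paper derives (\ref{rhopm}) by passing through the sequence congruences $u_q(k)\equiv u_q(k+\lcm(2,\rho(p^t))p^{m-t})\pmod*{p^m}$ rather than through the identity $\rho(p^j)=2\ord_{p^j}(9)$, and it leaves parts b) and c) (including the vacuity of the exceptional case in c), which you rightly verify) as brief deductions that your write-up spells out in full.
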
 
\begin{proof} 
For notational convenience write  $\rho_1(p)=\lcm(2,\rho(p^t))$.
Since $u_q(k)\equiv u_q(k+\rho_1(p^t))\pmod*{p^t}$ for every $k\ge 1$, 
it follows that $3^{\rho_1(p^t)}\equiv 1\pmod*{p^t}$ and 
from this 
we obtain $3^{\rho_1(p^t)p^{m-t}}\equiv 1\pmod*{p^m}$ and hence we
deduce that $$u_q(k)\equiv u_q(k+\rho_1(p^t)p^{m-t})\pmod*{p^m}$$ for every $k\ge 1$ and 
so $\rho(p^m)\mid \rho_1(p^t)p^{m-t}$.\\
\indent Assertion a) follows from (\ref{rhopm}) since
$\rho(p^t)$ is even for $p^t\ne q$  by Theorem \ref{generalperiod}. 
Assertion b) follows from assertion a)
and the observation that $\rho(p^m)\mid \rho(p^{m+1})$. 
Finally, assertion c) is a consequence of Theorem \ref{generalperiod}
and Lemma \ref{Beyl}. \end{proof}
\begin{Lem} 
\label{indelift}
If $p>3$ and $\iota(p^m)<\rho(p^m)$, then $\iota(p^{m+1})<p^{m+1}/2$.
\end{Lem}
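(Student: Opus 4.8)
The plan is to lift the first collision modulo $p^m$ up to $p^{m+1}$, while keeping the indices involved safely below $p^{m+1}/2$. First I would dispose of the case $p^m=q$: since $u_q(j)\equiv 3^j/4\pmod*q$, the terms $u_q(1),\dots,u_q(\ord_q(3))$ are pairwise distinct modulo $q$, so $\iota_q(q)=\ord_q(3)=\rho_q(q)$ and the hypothesis $\iota(p^m)<\rho(p^m)$ cannot hold. Hence we may assume $p^m\neq q$, which by Lemma \ref{parity} makes $\rho:=\rho(p^m)$ \emph{even}, and by Lemma \ref{previous} gives $\rho\mid\varphi(p^m)$, so $\rho\le p^{m-1}(p-1)$. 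The hypothesis then provides a \emph{seed collision}: indices $1\le i<j\le\rho$ with $u_q(i)\equiv u_q(j)\pmod*{p^m}$, where $j=\iota(p^m)+1$. It is precisely the inequality $j\le\rho$ (and not merely $j\le\rho+1$) that the hypothesis buys, and this single digit of slack will be exactly what the final estimate needs.

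Next I would split according to Lemma \ref{liftje}b), by which $\rho(p^{m+1})\in\{\rho,\,p\rho\}$. If $\rho(p^{m+1})=\rho$, then the sequence is purely periodic modulo $p^{m+1}$ and $\iota(p^{m+1})\le\rho(p^{m+1})=\rho\le p^{m-1}(p-1)<p^m\le p^{m+1}/2$, so we are done. So assume $\rho(p^{m+1})=p\rho$; then $3^\rho\equiv1\pmod*{p^m}$ but $3^\rho\not\equiv1\pmod*{p^{m+1}}$, whence $\nu_p(3^\rho-1)=m$ and we may write $3^\rho\equiv 1+\delta p^m\pmod*{p^{m+1}}$ with $p\nmid\delta$. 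Because $\rho$ is even, shifting an index by a multiple of $\rho$ preserves its parity, and the expansion $(1+\delta p^m)^s\equiv 1+s\delta p^m\pmod*{p^{m+1}}$ yields the key lifting formula $4u_q(n+s\rho)\equiv 4u_q(n)+s\,\delta\,3^n p^m\pmod*{p^{m+1}}$ for all $n,s\ge0$.

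From this, the shifted pair $(i+s\rho,\,j+s'\rho)$ collides modulo $p^{m+1}$ exactly when $\sigma+\delta(s'3^{j}-s\,3^{i})\equiv0\pmod*p$, where $p^m\sigma=4(u_q(j)-u_q(i))$; solving for $s'$ exhibits the admissible pairs as a line $s'\equiv 3^{i-j}s+c\pmod*p$ of invertible slope. The main obstacle is that any one such collision gives only the trivial bound $\iota(p^{m+1})\le\rho(p^{m+1})=p\rho\approx p^{m+1}$, which misses the target by roughly a factor of $2$. To gain that factor I would localize \emph{both} shifts to the lower half $\{0,1,\dots,(p-1)/2\}$, and here $p$ odd is essential: this set and its image under $s\mapsto 3^{i-j}s+c$ each have $\tfrac{p+1}{2}$ elements in $\mathbb{Z}/p\mathbb{Z}$, so $\tfrac{p+1}{2}+\tfrac{p+1}{2}>p$ forces them to intersect, producing $s,s'\in\{0,\dots,(p-1)/2\}$. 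The two resulting indices are distinct (their difference $(s-s')\rho$ cannot equal $j-i\in(0,\rho)$) and at most $j+s'\rho\le\rho+\tfrac{p-1}{2}\rho=\tfrac{p+1}{2}\rho\le\tfrac{p+1}{2}p^{m-1}(p-1)=\tfrac{p^{m+1}-p^{m-1}}{2}<\tfrac{p^{m+1}}{2}$, so $\iota(p^{m+1})<p^{m+1}/2$. The delicate point is that this estimate clears $p^{m+1}/2$ only because $j\le\rho$; had the seed sat at index $\rho+1$, as would happen if $\iota(p^m)=\rho(p^m)$, the bound would fail by exactly one period, which is why the hypothesis is indispensable.
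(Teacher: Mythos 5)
Your proof is correct and follows essentially the same route as the paper's: reduce to $p^m\neq q$ via $\iota(q)=\rho(q)$, split on $\rho(p^{m+1})\in\{\rho(p^m),\,p\rho(p^m)\}$ using Lemma \ref{liftje}b), and in the hard case lift the seed collision using $3^{\rho}\equiv 1+\delta p^m\pmod*{p^{m+1}}$, then pigeonhole two $(p+1)/2$-element subsets to get shifts $k_1,k_2\le p/2$ and bound the indices by $(p+1)\rho(p^m)/2\le p^{m-1}(p^2-1)/2<p^{m+1}/2$. The only cosmetic difference is that you make the collision correspondence explicit as an affine map $s'\equiv 3^{i-j}s+c\pmod*p$, where the paper phrases the same fact as an equality of the two $p$-element orbit sets modulo $p^{m+1}$.
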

\begin{proof} 
Since $\iota(q)=\rho(q)$ we may assume that $p^m\ne q$. By Theorem \ref{generalperiod} this
implies that $\rho(p^m)$ is even. It then follows by part b) of Lemma \ref{liftje} that
either $\rho(p^{m+1})=\rho(p^m)$ or
$\rho(p^{m+1})=p\rho(p^m)$. In the first case
$$\iota(p^{m+1})\le \rho(p^{m+1})=\rho(p^m)\le p^m<p^{m+1}/2,$$ so we may
assume that $\rho(p^{m+1})=p\rho(p^m)$. This implies
that
\begin{equation}
\label{nonmirimanoff}
3^{\rho(p^m)}\equiv 1+kp^m\pmod*{p^{m+1}}
\end{equation}
with $p\nmid k$. From this we infer that
$u_q(i+j\rho(p^m))$ assumes $p$ different values modulo $p^{m+1}$ as
$j$ runs through $0,1,\ldots,p-1$. 
Put $j_1=\iota(p^{m})+1$. By assumption there
exists $1\le i_1<j_1<\rho(p^m)$ such that $u_q(i_1)\equiv u_q(j_1)\pmod*{p^m}$.
Modulo $p^{m+1}$ we have
$$\{u_q(i_1+j\rho(p^m)):0\le j\le p-1\}=\{u_q(j_1+j\rho(p^m)):0\le j\le p-1\}.$$ 
The cardinality of these sets is $p$. Now let us consider the subsets obtained from the
above two sets if we restrict $j$ to be $\le p/2$. Each contains
$(p+1)/2$ different elements. It follows that these sets must have an element
in common. Say we have
$$u_q(i_1+k_1\rho(p^m))\equiv u_q(j_1+k_2\rho(p^m))\pmod*{p^{m+1}},~0\le k_1,k_2\le p/2.$$
Since by assumption $i_1\not\equiv j_1\pmod*{\rho(p^m)}$, we have that
$$i_1+k_1 \rho(p^m)\ne j_1+k_2\rho(p^m).$$
The proof is completed on noting that 
$i_1+k_1 \rho(p^m)$ and $j_1+k_2\rho(p^m)$ are bounded above by
$$\iota(p^m)+1+(p-1){\rho(p^m)\over 2}\le (p+1){\rho(p^m)\over 2}\le (p+1){\varphi(p^m)\over 2}=
p^{m-1}{(p^2-1)\over 2}<{p^{m+1}\over 2},$$
where we used 
that, by assumption, $\iota(p^m)+1\le \rho(p^m)$ and Lemma \ref{previous}. \end{proof}

\begin{Lem}
\label{Bb}
Let $p>3$ and $k\ge 1$ an integer.
If $\iota(p^k)\le p^k/2$, then
 $\iota(p^m)\le p^m/2$ for every $m>k$.
\end{Lem}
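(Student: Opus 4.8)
The plan is to prove the lemma by induction, reducing everything to a single lifting step: it suffices to show that, for each $\ell$ with $p^{\ell}\neq q$, one has $\iota(p^{\ell})\le p^{\ell}/2\Rightarrow \iota(p^{\ell+1})\le p^{\ell+1}/2$. Iterating this from $\ell=k$ upwards then yields $\iota(p^m)\le p^m/2$ for every $m>k$. Since $p\neq q$ forces $p^{\ell}\neq q$ for all $\ell\ge 1$, the induction runs without exceptional levels in the case that is actually needed, and I would treat $p=q$ separately at the end. Throughout I use that, as $p>3$, the sequence is purely periodic modulo $p^{\ell}$ (Corollary \ref{9}), so that $\iota(p^{\ell})\le\rho(p^{\ell})$.

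For the lifting step itself I would split according to the size of the period, comparing $\rho(p^{\ell})$ with $p^{\ell}/2$. Suppose $\iota(p^{\ell})\le p^{\ell}/2$ and $p^{\ell}\neq q$. If $\rho(p^{\ell})\le p^{\ell}/2$, then part b) of Lemma \ref{liftje} (applicable precisely because $p^{\ell}\neq q$) gives $\rho(p^{\ell+1})\le p\,\rho(p^{\ell})\le p^{\ell+1}/2$, whence $\iota(p^{\ell+1})\le\rho(p^{\ell+1})\le p^{\ell+1}/2$. If instead $\rho(p^{\ell})>p^{\ell}/2$, then the hypothesis yields $\iota(p^{\ell})\le p^{\ell}/2<\rho(p^{\ell})$, so the strict inequality $\iota(p^{\ell})<\rho(p^{\ell})$ holds and Lemma \ref{indelift} applies directly, giving $\iota(p^{\ell+1})<p^{\ell+1}/2$. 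In both branches $\iota(p^{\ell+1})\le p^{\ell+1}/2$, which is exactly the inductive hypothesis one level higher, so the induction closes.

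The reason the two available tools fit together is the dichotomy on $\rho(p^{\ell})$ versus $p^{\ell}/2$: the crude bound $\iota\le\rho$ is strong enough only when the period is itself at most $p^{\ell}/2$, while Lemma \ref{indelift} requires $\iota$ to lie \emph{strictly} below $\rho$, a gap that the hypothesis $\iota(p^{\ell})\le p^{\ell}/2$ supplies exactly when $\rho(p^{\ell})$ exceeds $p^{\ell}/2$. I do not anticipate any real difficulty in the main case $p\neq q$, since each branch is a short computation. The genuinely delicate point is the very first step when $p=q$, namely lifting from $q$ to $q^{2}$: here $p^{\ell}=q$, so part b) of Lemma \ref{liftje} is unavailable, and as $\iota(q)=\rho(q)=\ord_q(3)$ one is forced into the small-period branch with $\rho(q)\le q/2$. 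When $\rho(q)$ is even the estimate $\rho(q^{2})\mid\rho(q)\,q\le q^{2}/2$ still closes the step, but when $\ord_q(3)$ is odd — the single exceptional modulus $d=q$ — equation (\ref{rhopm}) only yields $\rho(q^{2})\mid 2\rho(q)\,q\le q^{2}$, and the period no longer suffices; one would then have to bound $\iota(q^{2})$ directly by the pigeonhole lifting of an opposite-parity collision, as in the analysis underlying Lemma \ref{q^2}. In the application to Browkin-S\u al\u ajan values the relevant primes satisfy $p\neq q$, so I would record the reduction to $p\neq q$ and let the clean induction above finish the proof.
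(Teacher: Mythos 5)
For $p\neq q$ your induction is correct, and it is in essence the paper's own argument in a slightly cleaner packaging: the paper likewise reduces to the single step $m=k+1$ and splits into a small-period case and a maximal-period case --- its Case~1, $\rho(p^{k+1})\le\varphi(p^{k+1})/2$, plays the role of your branch $\rho(p^{\ell})\le p^{\ell}/2$ (settled by $\iota\le\rho$), and its Case~2, $\rho(p^{k+1})=\varphi(p^{k+1})$, plays the role of your branch $\rho(p^{\ell})>p^{\ell}/2$, where the hypothesis $\iota(p^{\ell})\le p^{\ell}/2$ supplies the strict inequality $\iota<\rho$ that Lemma~\ref{indelift} needs. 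The only difference is that you case on the period one level down and invoke Lemma~\ref{liftje}~b) where the paper invokes Theorem~\ref{generalperiod}; nothing is lost there.

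The genuine gap is that the lemma as stated allows $p=q$, so ``recording the reduction to $p\neq q$'' proves a weaker statement; the configuration you leave open, namely $k=1$, $p=q$, $\rho(q)=\ord_q(3)$ odd, really occurs (e.g.\ $q=23$: $\ord_{23}(3)=11$ is odd, so $\iota(23)=\rho(23)=11\le 23/2$, and $23$ is not Mirimanoff, so the order of $9$ lifts and $\rho(23^2)=\varphi(23^2)$), and it must be proved, not set aside. It can be closed with material already in the paper. Since $\ord_q(3)$ is an odd divisor of the even number $q-1$, the quotient $(q-1)/\ord_q(3)$ is even; hence either $\ord_q(3)\le (q-1)/4$, in which case \eqref{rhopm} gives $\rho(q^2)\mid 2\ord_q(3)\,q$, so $\rho(q^2)\le q(q-1)/2<q^2/2$ and the period bound still closes the step; or $\ord_q(3)=(q-1)/2$. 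In the latter case, if $q$ is Mirimanoff then $\ord_{q^2}(9)=(q-1)/2$ and $\rho(q^2)=q-1<q^2/2$; if $q$ is not Mirimanoff then $q\in{\cal Q}$ and $3^{(q-1)/2}\not\equiv 1\pmod*{q^2}$, and the corollary to Lemma~\ref{q^2} is exactly the missing bound $\iota_q(q^2)\le 3(q-1)/2-1<q^2/2$ --- this is the opposite-parity collision you allude to, already worked out in the paper. Once the step from $q$ to $q^2$ is done, all later moduli are $q^m$ with $m\ge 2$, hence different from $q$, and your induction finishes. Incidentally, your instinct that this corner is the delicate one is vindicated by the paper itself: its Case~2 dismisses $p^k=q$ with the assertion ``$\iota(p)=\rho(p)$ and so $k\ge 2$'', which fails for $q=23$ as above (there $\rho(q)=(q-1)/2\neq\varphi(q)$ and Lemma~\ref{indelift} is inapplicable since $\iota(q)=\rho(q)$); the honest repair is the route through Lemma~\ref{q^2}, notwithstanding the paper's remark that that lemma is not used in the sequel.
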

\begin{proof} It suffices to prove the result for $m=k+1$ and then
apply induction. 
Note that by Lemma \ref{previous} we have
$\rho(p^{k+1})|\varphi(p^{k+1})$.\\
{\sc{Case 1.}} $\rho(p^{k+1})\le \varphi(p^{k+1})/2$.\\
It follows that  $\iota(p^{k+1})\le \rho(p^{k+1})\le \varphi(p^{k+1})/2\le p^{k+1}/2$.\\
{\sc{Case 2.}}  $\rho(p^{k+1})=\varphi(p^{k+1})$.\\
If $p^k=q$, then $\iota(p)=\rho(p)$ and so $k\ge 2$ and
by Theorem \ref{generalperiod} 
it follows that 
$\rho(p^k)=\varphi(p^{k})$. If $p^k\ne q$, then it also
follows by Theorem \ref{generalperiod}  that $\rho(p^k)=\varphi(p^{k})$.
Since $p>3$ we have
$$\iota(p^k)\le p^k/2<p^{k-1}(p-1)=\varphi(p^k)=\rho(p^k).$$
On applying Lemma \ref{indelift} we infer that also in this case 
$\iota(p^{k+1})\le p^{k+1}/2$. \end{proof}

\noindent On combining the latter two lemmas with Lemma \ref{nonnie2} we arrive at the following 
more appealing result.
\begin{Lem}\label{18} Let $p>3$.\\
\noindent {\rm a)} If 
$\iota(p)<\rho(p)$, then $p^2,p^3,\ldots$ are all 
Browkin-S\u al\u ajan non-values.\\
{\rm b)} If $\iota(p)\le p/2$, then $p,p^2,p^3,\ldots$ are all Browkin-S\u al\u ajan non-values.
\end{Lem}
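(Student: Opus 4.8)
The plan is to derive Lemma~\ref{18} as a direct consequence of the two lifting lemmas just established, namely Lemma~\ref{indelift} and Lemma~\ref{Bb}, combined with the non-value criterion Lemma~\ref{nonnie2}. Both parts proceed by the same template: use the hypothesis at the base case $m=1$ to feed one of the lifting lemmas, obtain the bound $\iota(p^m)\le p^m/2$ for all $m\ge 2$ (respectively $m\ge 1$), and then invoke Lemma~\ref{nonnie2} to conclude that each such $p^m$ is a Browkin-S\u al\u ajan non-value.

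For part b), I would start from the hypothesis $\iota(p)\le p/2$, which is exactly the case $k=1$ of Lemma~\ref{Bb}. That lemma then gives $\iota(p^m)\le p^m/2$ for every $m>1$, and together with the hypothesis itself we have $\iota(p^m)\le p^m/2$ for every $m\ge 1$. Applying Lemma~\ref{nonnie2} to each $p^m$ shows that $p,p^2,p^3,\ldots$ are all non-values, as desired. This part is essentially immediate.

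For part a), the hypothesis is the weaker condition $\iota(p)<\rho(p)$, which is the case $m=1$ of Lemma~\ref{indelift}. Applying that lemma yields $\iota(p^2)<p^2/2$, so in particular $\iota(p^2)\le p^2/2$. This is precisely the hypothesis of Lemma~\ref{Bb} with $k=2$, which then propagates the bound upward to give $\iota(p^m)\le p^m/2$ for every $m>2$, and hence for every $m\ge 2$. A final application of Lemma~\ref{nonnie2} to each $p^m$ with $m\ge 2$ shows that $p^2,p^3,\ldots$ are all non-values. Note that part a) says nothing about $p$ itself being a non-value, which is consistent with the weaker hypothesis: one only controls the second and higher powers.

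The main subtlety, if any, is purely bookkeeping: one must be careful to invoke Lemma~\ref{Bb} with the correct value of $k$ (namely $k=1$ for part b) and $k=2$ for part a)) and to note that a strict inequality suffices to trigger the non-strict hypothesis of Lemma~\ref{Bb}. There is no genuine obstacle here; the real work has already been discharged in Lemmas~\ref{indelift} and~\ref{Bb}, and this lemma simply repackages their conclusions into the clean, directly usable statements about non-values that will be applied in the subsequent reduction to primes in $\mathcal P$.
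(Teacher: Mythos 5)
Your proposal is correct and follows essentially the same route as the paper's own proof: part a) feeds the hypothesis $\iota(p)<\rho(p)$ into Lemma~\ref{indelift} to get $\iota(p^2)\le p^2/2$, propagates upward via Lemma~\ref{Bb}, and concludes with Lemma~\ref{nonnie2}, while part b) applies Lemma~\ref{Bb} directly with base case $p$ and then Lemma~\ref{nonnie2}. The only difference is that you spell out the bookkeeping (the choice of $k$ and the strict-versus-nonstrict inequality) slightly more explicitly than the paper does.
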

\begin{proof} \medskip\hfil\break
a) If the conditions on $p$ are satisfied, then 
by Lemma \ref{indelift} it follows that $\iota(p^2)\le p^2/2$, 
which by Lemma \ref{Bb} implies that $\iota(p^m)\le p^m/2$ for every $m\ge 2$. 
By Lemma \ref{nonnie2} it then follows that $p^m$ is a non-value.\\
b) If $\iota(p)\le p/2$, then $\iota(p^m)\le p^m/2$ for every $m\ge 1$ by Lemma \ref{Bb} and by 
Lemma \ref{nonnie2} it then follows that $p^m$ is a non-value. \end{proof}


\noindent {\tt Commentary}. 
Section \ref{qkwadraat} is new.
Lemma \ref{liftje} is 
a generalization of the 
trivial \cite[Lemma 8]{MZ}, whereas Lemmas \ref{indelift} and \ref{18} are proved in a similar way as Lemmas 16, respectively 18 
from \cite{MZ}. 
\par The set ${\cal P}$ in \cite{MZ} was partitioned in three subsets, 
${\cal P}_1,~{\cal P}_2$ and ${\cal P}_3$. In \cite[Lemma 19]{MZ} it is
shown that, if $p$ is in ${\cal P}_3$, then 
$\iota(p)\le p/2$ and hence $p\not\in {\cal D}_5$. The argument given there cannot be
generalized to arbitrary $q$. However, we will see that the weaker statement that $\iota(p)<\rho(p)$ is true, which is enough for our purposes and shows that $p^2,p^3,\ldots$ cannot be Browkin-S\u al\u ajan values.\\
\indent Lemma \ref{Bb} is patterned after
\cite[Lemma 17]{MZ}, but a somewhat more elegant proof
is given now. In the earlier proof one should read
$p^{m-1}(1-1/p)$ instead of $p^{m-2}(1-1/p)$.

\subsection{If $p\in\cal P$, then $\iota(p)<\rho(p)$}
\label{tip}
Lemma \ref{nonnie2} in combination with the following lemma shows that every $p\in {\cal P}$ is a Browkin-S\u al\u ajan  non-value. Recall that, if $p\in {\cal P}$, then $\rho(p)=p-1$ and
that by definition $q\not\in \cal P$.
\begin{Lem}
\label{remainder}
If $ p\in  {\cal P}$, then $\iota(p)<\rho(p).$  
\end{Lem}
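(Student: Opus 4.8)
The plan is to show that $u_q(1),\dots,u_q(p-1)$ are \emph{not} pairwise distinct modulo $p$. Since $\rho(p)=p-1$ for $p\in{\cal P}$, any collision among these $p-1$ terms forces $\iota(p)\le p-2<p-1=\rho(p)$, which is exactly the claim. So I argue by contradiction and assume that $u_q(1),\dots,u_q(p-1)$ occupy $p-1$ distinct residue classes modulo $p$. They then fill out all of $\mathbb{Z}/p\mathbb{Z}$ except for a single residue $c$, and the idea is to pin down $c$ and extract a numerical contradiction from the first two power sums of the sequence.

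The main tool is the evaluation of $\sum_{j=1}^{p-1}u_q(j)^t$ modulo $p$ for $t=1,2$ by geometric series, using $3^{p-1}\equiv 9^{p-1}\equiv(-3)^{p-1}\equiv 1\pmod* p$. For $t=1$ one gets $\sum_{j=1}^{p-1}3^j\equiv 0$ and $\sum_{j=1}^{p-1}(-1)^j=0$, hence $\sum_{j=1}^{p-1}u_q(j)\equiv 0\pmod* p$. For $t=2$, expanding $u_q(j)^2=\big(3^{2j}-2q^*(-1)^j3^j+(q^*)^2\big)/16$ and summing, the two geometric sums $\sum 9^j$ and $\sum(-3)^j$ again vanish, leaving $\sum_{j=1}^{p-1}u_q(j)^2\equiv -(q^*)^2/16\pmod* p$. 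The crucial feature is that this value is \emph{nonzero}: $p>3$ makes $16$ invertible, and $p\ne q$ (built into the definition of ${\cal P}$) gives $p\nmid q^*$.

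Next I compare these with the power sums of the putative value set $S=(\mathbb{Z}/p\mathbb{Z})\setminus\{c\}$. Since $\sum_{x=0}^{p-1}x\equiv\sum_{x=0}^{p-1}x^2\equiv 0\pmod* p$ for $p>3$, one has $\sum_{x\in S}x\equiv -c$ and $\sum_{x\in S}x^2\equiv -c^2$. Matching the first power sums forces $c\equiv 0\pmod* p$; substituting $c=0$ into the second then forces $-(q^*)^2/16\equiv 0\pmod* p$, i.e.\ $p\mid q^*$, contradicting $p\ne q$. This rules out the all-distinct case and completes the proof.

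The pleasant point is that the argument is uniform over ${\cal P}_1,{\cal P}_2,{\cal P}_3$ and over all admissible $q$, so no case distinction on $\ord_p(3)$ or on the sign of $q^*$ is needed; the standing hypotheses $p>3$ and $p\ne q$ furnished by $p\in{\cal P}$ are exactly what make the quadratic power sum nonzero, and this nonvanishing is the hinge of the whole argument. I expect the only real work to be the routine bookkeeping of the geometric sums, rather than any conceptual obstacle. As an alternative one could recast the collision condition as the solvability of $3s_1+2q^*=s_2$ in quadratic residues $s_1,s_2$ and estimate the character sum $\sum_{s}\left(\frac{s(3s+2q^*)}{p}\right)$, but that route is messier and leaves a handful of small primes to be checked by hand, whereas the power-sum argument is clean and complete.
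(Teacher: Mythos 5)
Your proof is correct, and it takes a genuinely different route from the paper's. The paper argues constructively: exploiting that $\ord_p(9)=(p-1)/2$ makes the even powers of $3$ sweep out all nonzero squares modulo $p$, it splits into the cases $p\in{\cal P}_1\cup{\cal P}_3$ and $p\in{\cal P}_2$ and, after a quadratic-residue analysis of $q^*$, $-q^*$ and $3q^*$ (with special subcases such as $q^*\equiv -1,\,1/3\pmod*{p}$), exhibits an explicit opposite-parity collision $u_q(i)\equiv u_q(j)\pmod*{p}$ with $i<j\le p-1$. You argue non-constructively by counting: if $u_q(1),\dots,u_q(p-1)$ were pairwise distinct modulo $p$, they would omit exactly one residue $c$, and comparing power sums --- $\sum_j u_q(j)\equiv 0$ and $\sum_j u_q(j)^2\equiv -(q^*)^2/16\pmod*{p}$ by geometric series, against $-c$ and $-c^2$ --- forces first $c\equiv 0$ and then $p\mid q^2$, contradicting $p\ne q$. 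All your steps check out: the geometric series vanish because $3$, $9$ and $-3$ are $\not\equiv 1\pmod*{p}$ and have order dividing $p-1$; the identities $\sum_x x\equiv \sum_x x^2\equiv 0\pmod*{p}$ need exactly $p>3$; and the hypothesis $p\in{\cal P}$ enters only through $\rho(p)=p-1$ and $p\ne q$. What each approach buys: yours is shorter, uniform (no case distinctions on $\ord_p(3)$ or on residue symbols), and actually proves something slightly stronger, namely that \emph{every} prime $p>3$ with $p\ne q$ admits a collision among the first $p-1$ terms, whatever the order of $9$ is; what it gives up is the explicit location and parity structure of the collision, which the paper's construction provides. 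That loss is harmless here, since the only property used downstream (for instance in the proof of Lemma \ref{upsilon}) is the bare inequality $\iota(p)<\rho(p)$, from which the opposite-parity nature of the first collision is rederived independently via the order of $9$ modulo $p$.
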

\begin{proof}
We will find 
solutions to the congruence $3^{2i-1}+q^*\equiv 3^{2j}-q^*\pmod*{4p}$ with $ 1\le i,j\le (p-1)/2$, which then gives $u_q(2i-1)\equiv u_q(2j)\pmod*{p}$ 
and yields that $\iota(p)<\max\{2i-1,2j\}\le p-1=\rho(p)$. The indices are here of 
different parity as focusing on terms with indices having the
same parity will give only 
$\iota(p)\le \rho(p)$. As trivially
$3^{2i-1}+q^*\equiv 3^{2j}-q^*\pmod*{4}$, it
is enough to consider the congruences only modulo
$p$. We will make use of the fact that
$\{3^{2k}\pmod*p:1\le k\le (p-1)/2\}$ 
swipes out all non-zero squares modulo $p$ 
and that 
the set $\{3^{2k-1}\pmod*p:1\le k\le (p-1)/2\}$ 
swipes out all non-squares modulo $p$ in
case $\big({3\over p}\big)=-1$. This is 
a consequence of our 
assumption that $\ord_p(9)=(p-1)/2$.
\\
\noindent{\sc{Case 1.}} $p\in\cal P_1\cup\cal P_3.$ \\
\noindent Note that $\big({3\over p}\big)=-1$.
If $q^*$ is not a quadratic residue mod $p$, then $q^*\equiv3^{2i-1}\pmod*{p}$ for some $1\le i\le (p-1)/2,$ therefore $$ 3^{2i}-3^{2i-1}=2\cdot3^{2i-1}\equiv 2q^*\pmod*{p},$$ which yields $u_q(2i)\equiv u_q(2i-1)\pmod*p.$
 If $q^*$ is a quadratic residue mod $p,$ then
 we have $q^*\equiv3^{2k} \pmod*{p},$ for some $1\le k\le (p-1)/2$ and we distinguish two subcases:\\
\noindent a) $-q^*$ is a quadratic non-residue mod $p.$ Then $-q^*\equiv 3^{2\ell-1}\pmod*p,$ for some $1\le \ell\le (p-1)/2,$ and we get $u_q(2\ell-1)\equiv u_q(2k)\equiv 0\pmod*{p}.$\\ \noindent 
b) $-q^*$ is a quadratic residue mod $p.$ Then $-q^*\equiv 3^{2h}\pmod*p,$ for some $1\le h\le(p-1)/2.$ If $h<(p-1)/2,$ then $u_q(2h+1)\equiv -3q^*+q^*=-q^*-q^*\equiv u_q(2h)\pmod*p.$ If $h=(p-1)/2,$ then $-q^*\equiv 1\pmod*p$ and $u_q(1)\equiv u_q(p-1)\equiv 2\pmod*p.$ \\\noindent 
{\sc{Case 2.}} $p\in\cal P_2.$\\
\noindent We have 
\begin{equation}
\label{zondag1}
 u_q(2m-1)\equiv u_q(2m) \pmod*p\Leftrightarrow 3^{2m}\equiv 3q^*\pmod*p ,
\end{equation}
 and  
 \begin{equation}
 \label{zondag2}
 u_q(2m)\equiv u_q(2m+1) \pmod*p\Leftrightarrow 3^{2m}\equiv -q^*\pmod*p.
 \end{equation} 
 Since $\big({3q^*\over p}\big)=\big({-3\over p}\big)\big({-q^*\over p}\big)=-\big({-q^*\over p}\big),$  it follows that either 
 $3q^*$ or $-q^*$ is a square modulo $p.$ 
 Thus $3^{2k}\equiv -q^*\pmod*p$ or $3^{2k}\equiv 3q^*\pmod*p$
 holds for some $1\le k\le (p-1)/2$. Note 
 that if $q^*\not\equiv -1,1/3 \pmod*p$, then
 either \eqref{zondag1} or \eqref{zondag2} is satisfied
 with $m=k<(p-1)/2$. Otherwise, we have 
 to deal with the following two subcases:\\
 \noindent a) $q^*\equiv 1/3 \pmod*p. $ Then 
$u_q(p-2)\equiv (1/3+ 1/3)/4=(1-1/3)/4\equiv u_q(p-1)\pmod*{p}$.\\
\noindent b) $q^*\equiv -1\pmod*p. $ Then 
 $u_q(1)=(3-1)/4= (1-(-1))/4\equiv u_q(p-1)\pmod* p.$
\end{proof}

\subsubsection{A long overdue proof}
\label{5.4.1}
Finally we have developed enough tools to live up to our
promise made at the 
end of Section \ref{primesets} and prove Proposition \ref{reducetoprime}. 
\begin{proof}[Proof of Proposition \ref{reducetoprime}]
Suppose that $(d,2q)=1$. By Lemma \ref{redpp} it follows that $d=p^m$ with $p>3$ and $p\in {\cal P}$ (hence $p\ne q$).
It follows from Lemma~\ref{remainder} that $\iota(p)<\rho(p)$ for 
every $p\in \mathcal P,$ which implies by Lemma \ref{18} that $m=1$ and $d=p$.
\end{proof}

\noindent{\tt{Commentary.}} The first case of the proof of Lemma \ref{remainder} is the counterpart of \cite[Lemma 19]{MZ}, while the second is that of \cite[Lemma 20]{MZ}.

\subsection{$D_q(n)$ is not a `big' prime}  
We will now use classical exponential sum techniques to show that, for sufficiently large primes, the condition given in Corollary~\ref{prop2n} is not satisfied. Therefore, big primes are Browkin-S\u al\u ajan non-values.

Let us denote by $\psi$ the additive characters of the group $G$ and $\psi_0$ the trivial character. For any non-empty subset $A\subseteq G$, let us define the quantity \begin{equation}\label{def_S(A)}
|\widehat{A}|=\max_{\psi\neq \psi_0} \left| \sum_{a\in A} \psi(a)\right|,
\end{equation}
where the maximum is taken over all non-trivial characters in $G$.

The following result is Lemma 21 in \cite{MZ}.
\begin{Lem}\label{B+B_zero} Let $G$ be a finite abelian group. For any given non-empty subsets $A,B\subseteq G$, whenever $A\cap (B+B)=\emptyset$ we have 
$$|B|\le {|\widehat{A}||G|\over |A|+|\widehat{A}|},$$ where $|\widehat{A}|$ is the quantity defined in~\eqref{def_S(A)}.
\end{Lem}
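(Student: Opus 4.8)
The plan is to run the standard Fourier-analytic counting argument on the finite abelian group $G$, using the hypothesis $A\cap(B+B)=\emptyset$ to annihilate a representation-counting sum and then extracting a bound on $|B|$ via Parseval's identity. Throughout I would write $\widehat{A}(\psi)=\sum_{a\in A}\psi(a)$ and $\widehat{B}(\psi)=\sum_{b\in B}\psi(b)$, so that $\widehat{A}(\psi_0)=|A|$, $\widehat{B}(\psi_0)=|B|$, and $|\widehat A|=\max_{\psi\ne\psi_0}|\widehat A(\psi)|$ by definition.

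First I would introduce the representation function $r(x)=\#\{(b_1,b_2)\in B\times B:b_1+b_2=x\}$. The assumption $A\cap(B+B)=\emptyset$ says precisely that $r(x)=0$ for every $x\in A$, hence $\sum_{x\in A}r(x)=0$. Using the orthogonality relation $[g=0]=|G|^{-1}\sum_\psi\psi(g)$ for the indicator of the identity and the fact that $\psi(-x)=\overline{\psi(x)}$, I expand
$$r(x)=\frac{1}{|G|}\sum_\psi\overline{\psi(x)}\,\widehat{B}(\psi)^2,$$
and summing over $x\in A$ yields
$$0=\sum_{x\in A}r(x)=\frac{1}{|G|}\sum_\psi\widehat{B}(\psi)^2\,\overline{\widehat{A}(\psi)}.$$

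Next I would isolate the trivial character, whose term equals $|A||B|^2$, and rearrange to $|A||B|^2=-\sum_{\psi\ne\psi_0}\widehat{B}(\psi)^2\overline{\widehat{A}(\psi)}$. Taking absolute values, applying the triangle inequality, and invoking the bound $|\widehat{A}(\psi)|\le|\widehat A|$ (legitimate precisely because the trivial character has been removed) gives
$$|A||B|^2\le|\widehat A|\sum_{\psi\ne\psi_0}|\widehat{B}(\psi)|^2.$$
The closing ingredient is Parseval's identity $\sum_\psi|\widehat{B}(\psi)|^2=|G||B|$, which gives $\sum_{\psi\ne\psi_0}|\widehat{B}(\psi)|^2=|G||B|-|B|^2$. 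Substituting, cancelling one factor of $|B|>0$, and solving the resulting linear inequality $|A||B|\le|\widehat A|(|G|-|B|)$ for $|B|$ produces the claimed bound $|B|\le|\widehat A||G|/(|A|+|\widehat A|)$.

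The argument is essentially routine once it is set up, so there is no deep obstacle; the step demanding the most care is the passage through absolute values. One must observe that it is $\widehat{B}(\psi)^2$, a complex square rather than $|\widehat B(\psi)|^2$, that appears in the character sum, so the triangle inequality has to be applied before $|\widehat B(\psi)|^2$ is exposed and Parseval invoked. Keeping the trivial-character term separated throughout is exactly what makes the bound $|\widehat A(\psi)|\le|\widehat A|$ applicable and what produces the denominator $|A|+|\widehat A|$ after the final rearrangement.
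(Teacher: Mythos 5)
Your proof is correct, and it is the standard Fourier-analytic argument: expanding the representation function $r(x)=\#\{(b_1,b_2)\in B\times B: b_1+b_2=x\}$ in characters, killing the sum over $A$ by the disjointness hypothesis, isolating the trivial character, and finishing with the triangle inequality and Parseval. Note that the paper itself offers no proof of this lemma --- it simply quotes it as Lemma 21 of \cite{MZ} --- and your argument is precisely the one given there, so there is nothing to add beyond confirming that each step (in particular the division by $|B|>0$ and the use of $|\widehat{A}(\psi)|\le|\widehat{A}|$ only for $\psi\ne\psi_0$) is sound.
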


We will need the following auxiliary result, which can be found in~
Cilleruelo and Zumalac\'arregui~\cite{Ana}. 
\begin{Lem}
\label{S(A)} 
Let $g$ be a primitive root modulo $p$ and  $a$, $b$ and $c$ be integers 
such that $p\nmid abc$. Then the set 
\[
A_g(p;a,b,c)=\{ (x,y):\, ag^x-bg^{y} \equiv c \pmod* p\}\subset \mathbb Z_{p-1} \times \mathbb Z_{p-1}
\]
has $p-2$ elements and satisfies $|\widehat{A}_g(p;a,b,c)|<\sqrt{p}$.
\end{Lem}

\begin{Prop}\label{prime_lemma} Let $p>3$ be a prime with $p\ne q$. 
Suppose that $u_q(1),\ldots,u_q(n)$ are pairwise distinct modulo $p$. Then $p> \left\lfloor \frac{n}{4} \right\rfloor^{4/3}$.
\end{Prop}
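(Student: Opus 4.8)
The plan is to show the contrapositive: if $p$ is small compared to $n$, then some pair of the $u_q(j)$ collides modulo $p$, so that distinctness forces $p>\lfloor n/4\rfloor^{4/3}$. The collision I will chase is one between an odd-indexed and an even-indexed term. Multiplying $u_q(r)\equiv u_q(s)\pmod* p$ by $4$ and using $(-1)^r-(-1)^s=-2$ when $r$ is odd and $s$ is even, such a collision is equivalent to
$$3^{s}-3^{r}\equiv 2q^*\pmod* p,\qquad r\text{ odd},\ s\text{ even},\ 1\le r,s\le n.$$
Fix a primitive root $g$ modulo $p$ and write $3\equiv g^{t}\pmod* p$. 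Putting $r=2a+1$, $s=2b$ and using $3^{r}=3\cdot 9^{a}=g^{t}g^{2ta}$ and $3^{s}=g^{2tb}$, the collision becomes $g^{2tb}-3\,g^{2ta}\equiv 2q^*\pmod* p$, i.e. $(2tb,2ta)\in A:=A_g(p;1,3,2q^*)$. Since $p>3$ and $p\ne q$ we have $p\nmid 6q^*$, so Lemma \ref{S(A)} applies and gives $|A|=p-2$ and $|\widehat{A}|<\sqrt p$ inside $G=\mathbb Z_{p-1}\times\mathbb Z_{p-1}$.

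The decisive trick is the substitution $3^{r}=3\cdot 9^{a}$, which turns \emph{both} coordinates into even multiples of $t$ and thereby lets me realise the set of admissible index pairs as a sumset of a single set. I will choose
$$U=\{\,2tc \bmod (p-1)\ :\ 1\le c\le \lfloor n/4\rfloor\,\},\qquad B=U\times U\subseteq G.$$
Assuming $u_q(1),\dots,u_q(n)$ pairwise distinct modulo $p$, the even-indexed terms in particular are distinct, so $9^{1},\dots,9^{\lfloor n/2\rfloor}$ are distinct modulo $p$; hence $\lfloor n/2\rfloor\le\ord_p(9)$, which forces the residues $2tc$ for $1\le c\le\lfloor n/4\rfloor$ to be pairwise distinct modulo $p-1$. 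Therefore $|U|=\lfloor n/4\rfloor$ and $|B|=\lfloor n/4\rfloor^{2}$; note that this quadratic size of $B$ is exactly what will produce the exponent $4/3$. Moreover $B+B=(U+U)\times(U+U)$, and with $U$ chosen as above each element of $U+U$ is $2tc$ with $c$ in a range for which $s=2c$ and $r=2c+1$ are admissible indices $\le n$. Consequently every $(x,y)\in B+B$ corresponds to a genuine in-range pair $(s,r)$ of opposite parity, so $(x,y)\in A$ would give a collision $u_q(r)\equiv u_q(s)\pmod* p$. The distinctness hypothesis rules this out, whence $A\cap(B+B)=\emptyset$.

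It then remains to apply Lemma \ref{B+B_zero}. Using $|G|=(p-1)^2$, $|A|=p-2$ and $|\widehat{A}|<\sqrt p$, and noting that the right-hand side is increasing in $|\widehat A|$, we obtain
$$\lfloor n/4\rfloor^{2}=|B|\le\frac{|\widehat{A}|\,|G|}{|A|+|\widehat{A}|}<\frac{\sqrt p\,(p-1)^{2}}{(p-2)+\sqrt p}<p^{3/2},$$
where the last inequality holds because $(p-1)^2<p(p-2+\sqrt p)$ for every $p\ge 2$. Raising to the power $2/3$ gives $\lfloor n/4\rfloor^{4/3}<p$, which is the assertion.

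I expect the main obstacle to be bookkeeping rather than conceptual, namely pinning down the exact range of $U$ so that $U+U$ stays inside the set of exponents attached to in-range indices of the correct parity; only then does membership in $A$ genuinely yield a collision among $u_q(1),\dots,u_q(n)$ and not at an out-of-range index $r=n+1$. The two mild boundary phenomena — the off-by-one in the multiplier $c$ and possible wrap-around modulo $p-1$ — are both tamed by the inequality $\lfloor n/2\rfloor\le\ord_p(9)\le(p-1)/2$ coming from the distinctness of the even-indexed terms, which simultaneously guarantees that $|U|$ attains its full value $\lfloor n/4\rfloor$ and keeps the relevant multipliers well below $p-1$.
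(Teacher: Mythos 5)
Your strategy is the paper's own: encode an opposite-parity collision as membership of a pair of exponents in a set of type $A_g(p;a,b,c)$, take for $B$ a square grid of multiples of the index of $9$, and play $A\cap(B+B)=\emptyset$ against Lemmas \ref{B+B_zero} and \ref{S(A)}. The flaw sits exactly at the point you flagged and then claimed to have tamed. Your parametrization writes odd indices as $r=2a+1$, so the multipliers occurring in $U+U$ run up to $2\lfloor n/4\rfloor$, and the largest odd index your correspondence produces is $r=4\lfloor n/4\rfloor+1$; when $4\mid n$ this equals $n+1$. A pair in $A\cap(B+B)$ whose second coordinate has multiplier $a=2\lfloor n/4\rfloor=n/2$ then only yields the congruence $u_q(n+1)\equiv u_q(2b)\pmod*{p}$, which is perfectly compatible with $u_q(1),\ldots,u_q(n)$ being pairwise distinct modulo $p$. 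Hence for $n\equiv 0\pmod*{4}$ you cannot conclude $A\cap(B+B)=\emptyset$, and the argument for the stated bound collapses for those $n$. The inequality $\lfloor n/2\rfloor\le \ord_p(9)\le (p-1)/2$ is irrelevant to this defect: it guarantees $|U|=\lfloor n/4\rfloor$ and that distinct multipliers give distinct residues modulo $p-1$, but no order bound can exclude a collision at an index about which the hypothesis says nothing.

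The repair is the reparametrization the paper uses: write the odd index as $2s-1$ rather than $2a+1$, i.e.\ multiply the collision congruence $3^{2k}-3^{2s-1}\equiv 2q^*\pmod*{p}$ by $3$ to get $3\cdot 9^{k}-9^{s}\equiv 6q^*\pmod*{p}$, and work with $A_g(p;3,1,6q^*)$. Then a multiplier $s\le 2\lfloor n/4\rfloor\le\lfloor n/2\rfloor$ corresponds to the odd index $2s-1\le n-1$ and a multiplier $k$ to the even index $2k\le n$, so every element of $A\cap(B+B)$ would force a collision among $u_q(1),\ldots,u_q(n)$, and emptiness of the intersection follows for all $n$, not just $n\not\equiv 0\pmod*{4}$. (Patching your version by shrinking one coordinate of $B$ instead costs a factor and only yields $p>\bigl(\lfloor n/4\rfloor(\lfloor n/4\rfloor-1)\bigr)^{2/3}$, weaker than the stated $p>\lfloor n/4\rfloor^{4/3}$.) The rest of your argument, namely checking the hypothesis $p\nmid 6q^*$ of Lemma \ref{S(A)}, the count $|B|=\lfloor n/4\rfloor^2$ via the order bound, and the closing chain $\lfloor n/4\rfloor^2\le |\widehat{A}|\,|G|/(|A|+|\widehat{A}|)<p^{3/2}$, is sound.
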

\noindent \begin{proof} First observe that, if two elements have the same
parity index, then
$u_q(i)\not\equiv u_q(i+2k) \pmod*p$ iff $
9^k\not\equiv 1\pmod*p,$ thus $\ord_p(9)\ge n/2$. 
(Alternatively one might invoke Lemma \ref{oneventje} to obtain this conclusion.)
By hypothesis, on
comparing elements with distinct parity index, it follows that
\begin{equation}
\label{power3} 3\cdot 9^{k}-9^{s} \equiv 6q^*\pmod* p,\ 1\leq
k,s\leq \left\lfloor \tfrac{n}{2}\right\rfloor
\end{equation}
has no solution (otherwise $u_q(2k)\equiv u_q(2s-1) \pmod* p$, with $1\le 2k,
2s-1\le n$).

We will now show that the non-existence of solutions to 
equation~\eqref{power3} implies that $p >\lfloor \frac{n}{4} \rfloor^{4/3}$.
Let $g$ be a primitive root modulo $p$ and let $A_g(p;3,1,6q^*)$ be the set defined in
Lemma~\ref{S(A)}. Let $m$ be the smallest integer such that
$g^m\equiv 9\pmod* p$ and put
$$B=\{(mx,my):1\le x,y\le  \lfloor n/4\rfloor\} \subset \mathbb Z_{p-1} \times \mathbb Z_{p-1}.$$
Note that, since $\ord_p(9)\ge n/2$, it follows that $|B|=\left\lfloor
\frac{n}{4} \right\rfloor^2$ (since $m$ generates a subgroup of order at
least $n/2$ modulo $p-1$).

Observe that the non-existence of solutions to
equation~\eqref{power3} implies that
\begin{equation*}
\label{power4} 3\cdot g^{mk}-g^{ms} \equiv 6q^* \pmod* p,\ 1\leq
k,s\leq \left\lfloor \tfrac{n}{2}\right\rfloor
\end{equation*}
has no solutions and in particular $A_g(3,1,6q^*)\cap (B+B)=\emptyset$ (since clearly
$B+B\subseteq \{(mx,my): 1\le x,y\le \left\lfloor n/2\right\rfloor \}$). It
follows from Lemma~\ref{B+B_zero} and Lemma~\ref{S(A)} that
\begin{equation}
\label{bijna}
|B|=\left\lfloor \frac{n}{4} \right\rfloor^2 \le
\frac{|\widehat{A}||G|}{|A|+|\widehat{A}|}\le
\frac{p^{1/2}(p-1)^2}{p-2+p^{1/2}}<
p^{3/2},
\end{equation}
which concludes the proof.
\end{proof}

\begin{cor}\label{Nobigprimes}
If $p > 2060$ is a prime number with 
$p\ne q$, then $p$ is a Browkin-S\u al\u ajan non-value.
\end{cor}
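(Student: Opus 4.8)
The plan is to deduce Corollary \ref{Nobigprimes} directly from Proposition \ref{prime_lemma} by combining it with the upper bound on the discriminator coming from Corollary \ref{bertie}. The point is that a prime $p\ne q$ can only be a Browkin-S\u al\u ajan value if there is some $n$ with $D_q(n)=p$, and in that case the $n$ terms $u_q(1),\ldots,u_q(n)$ must be pairwise distinct modulo $p$. Proposition \ref{prime_lemma} then forces $p$ to be large relative to $n$, while Corollary \ref{bertie} forces $p$ to be small relative to $n$; for $p$ large enough these two constraints are incompatible.

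More precisely, I would argue by contradiction. Suppose $p>2060$, $p\ne q$, and $p$ is a Browkin-S\u al\u ajan value, say $p=D_q(n)$ for some $n\ge 1$. By definition of the discriminator, $u_q(1),\ldots,u_q(n)$ are pairwise incongruent modulo $p$, so Proposition \ref{prime_lemma} applies and gives
\begin{equation}
\label{bigp1}
p>\Big\lfloor \frac{n}{4}\Big\rfloor^{4/3}.
\end{equation}
On the other hand, Corollary \ref{bertie} gives $p=D_q(n)\le 2n-1<2n$, so that $n>p/2$. Substituting this lower bound for $n$ into \eqref{bigp1} yields an inequality in $p$ alone, and the task reduces to checking that this inequality fails once $p>2060$.

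The remaining step is the routine numerical verification. From $n>p/2$ we get $\lfloor n/4\rfloor\ge \lfloor p/8\rfloor$, and combined with \eqref{bigp1} this forces roughly $p>(p/8)^{4/3}$ up to the effect of the floor functions, i.e. $p^{1/3}<$ a constant, which can only hold for bounded $p$. I would carry out the elementary bookkeeping to confirm that the threshold is exactly $p=2060$ (being careful with the floors and with the strictness of the inequalities), obtaining a contradiction for every $p>2060$. Thus no such $p$ exists, proving that every prime $p>2060$ with $p\ne q$ is a Browkin-S\u al\u ajan non-value.

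I do not expect any genuine obstacle here: the only mildly delicate part is pinning down the sharp constant $2060$ from the interplay of the exponent $4/3$ and the factor of $8$ introduced by passing through $n>p/2$ and $\lfloor n/4\rfloor$. The conceptual work has already been done in Proposition \ref{prime_lemma}; this corollary is simply the arithmetic payoff of comparing its lower bound on $p$ with the universal upper bound $D_q(n)\le 2n-1$.
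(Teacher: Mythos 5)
You use the same two ingredients as the paper, namely Proposition \ref{prime_lemma} played off against the upper bound $D_q(n)\le 2n-1$ of Corollary \ref{bertie}, but the step you defer as ``routine numerical verification'' is exactly where the argument breaks, and no care with floors or strict inequalities can repair it. Substituting $n>p/2$ into $p>\lfloor n/4\rfloor^{4/3}$ only yields the necessary condition $p>\lfloor p/8\rfloor^{4/3}$, and since $\lfloor p/8\rfloor^{4/3}\le (p/8)^{4/3}=p\cdot p^{1/3}/16$, this condition is \emph{satisfied} by every $p<16^3=4096$: the two constraints are perfectly compatible throughout the range $2060<p<4096$ and no contradiction appears there. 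Concretely, take the prime $p=2063$ and $n=1032$: then $n>p/2$, $p=2063\le 2n-1=2063$, and $\lfloor n/4\rfloor^{4/3}=258^{4/3}\approx 1642<p$, so both Proposition \ref{prime_lemma} and Corollary \ref{bertie} hold simultaneously. Carried out correctly, your reduction proves only that primes $p\ne q$ beyond roughly $4100$ are non-values; it cannot reach the stated threshold $2060$, so the bookkeeping you postpone would refute your plan rather than confirm it.

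The paper avoids this loss by placing the threshold on $n$ rather than on $p$: for every $n\ge 2060$ one has $\lfloor n/4\rfloor^{4/3}\ge 2n$ (and $2060$ is sharp here, since for $n=2059$ one gets $514^{4/3}\approx 4117.4<4118$), so by Proposition \ref{prime_lemma} any prime $p\ne q$ modulo which $u_q(1),\ldots,u_q(n)$ are pairwise distinct must exceed $2n$, while Lemma \ref{prop2n} gives $D_q(n)<2n$; hence $D_q(n)$ is never a prime $\ne q$ once $n\ge 2060$. That is where the constant $2060$ actually lives. Note that even this argument, taken by itself, only bounds a putative prime value by $2\cdot 2059-1=4117$, since such a value could a priori still arise from some $n\le 2059$; in the paper, primes in that intermediate range are disposed of not by this counting argument but by the separate numerical criterion $h(p)\le (p+1)/2$ (Corollary \ref{criterionpower2}) of the following subsection. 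The moral is that no purely formal manipulation of Proposition \ref{prime_lemma} and Corollary \ref{bertie} alone yields a threshold on $p$ anywhere near $2060$, which is why your final step is a genuine gap and not a routine verification.
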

\begin{proof} First observe that, if $n\ge 2060,$ then it follows from Proposition~\ref{prime_lemma}  that if, for some prime $p\ge n,$ the 
elements $u_q(1),\ldots,u_q(n)$ are pairwise distinct modulo $p$, then
\[
p> \left\lfloor \frac{n}{4} \right\rfloor^{4/3} \ge 2n,
\]
and by Lemma~\ref{prop2n} it follows that $p$ is a 
Browkin-S\u al\u ajan non-value.\end{proof}


\noindent {\tt Commentary}. 
In the proof of 
Proposition \ref{prime_lemma} we now need to consider the more general
sets $A_g(p;3,1,6q^*)$ instead of the sets $A_g(p;3,1,30)$. 
As these behave in the same way as $A_g(p;3,1,30)$, provided
$p\ne q$, the proof is very similar to that of the corresponding
Proposition 4 in \cite{MZ}.

\subsection{Primes $p <2060$ that can occur}
A final step in \cite{MZ} was to check numerically that no prime $5<p<2060$ can occur as discriminator for the S\u al\u ajan sequence $u_5.$ For our more general Browkin-S\u al\u ajan sequence $u_q,$ this is no longer true. Numerical computations reveal, for instance, that $ D_q(5)=7 $ for certain values of $q$. By computer verification we will see, in fact, that 7 is the only such exceptional value, and Lemmas \ref{7remains!} and \ref{Dq(5)} will clarify when it occurs. 
\begin{Def}
Given a prime $p$, we define the universal
incongruence index as
$$\upsilon(p)=\max\{\iota_q(p):q\ne p,~q\ge 5\},$$
where $q$ ranges over the primes $q\ge 5$.
\end{Def}
The following easy property of the incongruence
index allows one to compute $\upsilon(p)$.
\begin{Lem}
Let $5\le q_1<q_2$ be two primes such 
that $q_2\equiv \pm q_1\pmod*{4p}$, then we have
$\iota_{q_1}(p)=\iota_{q_2}(p)$.
\end{Lem}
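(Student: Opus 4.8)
The plan is to show that the entire pattern of congruences among $u_q(1),\ldots,u_q(k)$ modulo $p$ is governed by the single residue $q^*\bmod p$, and then to verify that the hypothesis $q_2\equiv\pm q_1\pmod*{4p}$ forces $q_1^*\equiv q_2^*\pmod*p$.

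First I would observe that, since $p$ is odd (in the application $p\in{\cal P}$, so $p>3$), the factor $4$ is invertible modulo $p$, so for any indices $i<j$ we have
$$u_q(i)\equiv u_q(j)\pmod*p \iff 3^i-q^*(-1)^i\equiv 3^j-q^*(-1)^j\pmod*p.$$
The right-hand side depends on $q$ only through the residue $q^*\bmod p$, as the quantities $3^i$ and $(-1)^i$ are independent of $q$. Consequently the set of colliding pairs $\{(i,j):i<j,\ u_q(i)\equiv u_q(j)\pmod*p\}$, and hence the incongruence index $\iota_q(p)$ (the largest $k$ for which this set contains no pair with $j\le k$), is a function of $q^*\bmod p$ alone.

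It then suffices to prove that $q_2\equiv\pm q_1\pmod*{4p}$ implies $q_1^*\equiv q_2^*\pmod*p$. Recall that $q^*=(-1)^{(q-1)/2}q$, where the sign is $+1$ if $q\equiv1\pmod*4$ and $-1$ if $q\equiv3\pmod*4$. In the case $q_2\equiv q_1\pmod*{4p}$ we have simultaneously $q_2\equiv q_1\pmod*4$ and $q_2\equiv q_1\pmod*p$, so the signs agree and $q_2^*\equiv q_1^*\pmod*p$ at once. In the case $q_2\equiv-q_1\pmod*{4p}$ we have $q_2\equiv-q_1\pmod*4$, which interchanges the residue classes $1$ and $3$ modulo $4$ and hence flips the sign, giving $(-1)^{(q_2-1)/2}=-(-1)^{(q_1-1)/2}$. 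Combining this with $q_2\equiv-q_1\pmod*p$ yields
$$q_2^*=(-1)^{(q_2-1)/2}q_2\equiv\bigl(-(-1)^{(q_1-1)/2}\bigr)(-q_1)=(-1)^{(q_1-1)/2}q_1=q_1^*\pmod*p,$$
as desired.

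Putting the two steps together gives $\iota_{q_1}(p)=\iota_{q_2}(p)$. The only point requiring care is the bookkeeping of the sign $(-1)^{(q-1)/2}$ under $q\mapsto-q$ modulo $4$: once that sign flip is confirmed, the negation of $q$ modulo $p$ is exactly compensated, leaving $q^*\bmod p$ invariant. This elementary parity check is the sole subtlety rather than a genuine obstacle.
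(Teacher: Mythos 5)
Your proof is correct and is essentially the paper's argument: the paper's one-line proof notes that $u_{q_1}(n)\equiv u_{q_2}(n)\pmod*{p}$ for every $n\ge 1$, which rests on exactly the invariance of $q^*$ modulo $p$ (indeed modulo $4p$) that you verify via the sign flip under $q\mapsto -q\pmod*4$. Your formulation through the collision pattern, after clearing the factor $4$, is just an equivalent unpacking of that same observation.
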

\begin{proof}
Follows on noting that $u_{q_1}(n)\equiv u_{q_2}(n)
\pmod*{p}$ for every $n\ge 1$.
\end{proof}
\begin{Lem}
\label{upsilon}
We define $$S(p;r)=\{3\cdot 9^x-9^y\pmod*p:1\le 
2x,~2y-1\le r\}\cup\{0\}.$$
If $p\in \cal{P}$, then 
$\upsilon(p)=h(p)$, where
$$h(p)=\max \{r:S(p;r)\neq \mathbb Z/p\mathbb Z\}$$ is well-defined. 
\end{Lem}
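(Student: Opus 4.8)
The plan is to prove the two–sided comparison $\upsilon(p)\ge r\iff S(p;r)\ne\mathbb Z/p\mathbb Z$ for all relevant $r$, since from this both the well-definedness of $h(p)$ and the equality $\upsilon(p)=h(p)$ drop out immediately. Because $S(p;r)$ is non-decreasing in $r$, the set $\{r:S(p;r)\ne\mathbb Z/p\mathbb Z\}$ is a down-set, so the whole task reduces to identifying exactly which $r$ lie in it.

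First I would translate incongruence into membership in $S(p;r)$. Fix a prime $q\ge 5$ with $q\ne p$. Since $p\in{\mathcal P}$ gives $\ord_p(9)=(p-1)/2$ and $\rho(p)=p-1$, for every $r\le p-1$ there is no same-parity collision $u_q(i)\equiv u_q(j)\pmod p$ with $i\equiv j\pmod 2$ and $i,j\le r$ (such a collision would force the two indices to differ by at least $p-1$). Hence, for $r\le p-1$, the terms $u_q(1),\dots,u_q(r)$ are pairwise distinct modulo $p$ if and only if there is no opposite-parity collision, that is, no pair with $u_q(2x)\equiv u_q(2y-1)\pmod p$ and $2x,2y-1\le r$. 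A short computation rewrites this single congruence as $3\cdot 9^{x}-9^{y}\equiv 6q^*\pmod p$, which is exactly \eqref{power3}. As $6q^*\not\equiv 0\pmod p$ while $0\in S(p;r)$ by definition, we obtain, for $1\le r\le p-1$,
$$u_q(1),\dots,u_q(r)\text{ pairwise distinct}\iff 6q^*\notin S(p;r),$$
and in particular $\iota_q(p)=\max\{r:6q^*\notin S(p;r)\}$.

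The crux is the next step: as $q$ ranges over the admissible primes, $6q^*$ ranges over \emph{every} nonzero residue class modulo $p$. Given a target nonzero $c$, set $c'=6^{-1}c$ and invoke Dirichlet's theorem to produce a prime $q\equiv 1\pmod 4$ with $q\equiv c'\pmod p$; then $q^*=q$, so $6q^*\equiv c\pmod p$, and infinitude lets us also impose $q\ge 5$ and $q\ne p$. Fixing $q\equiv 1\pmod 4$ neutralises the sign ambiguity in $q^*=\pm q$, which is why this causes no trouble. This surjectivity is the one genuinely external input, and the step I expect to be the main (if modest) obstacle: it is precisely what forces the $\cup\{0\}$ in the definition of $S(p;r)$ and what turns $\upsilon$ into a statement about the complement of $S(p;r)$ rather than about a single $q$. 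Combining it with the previous paragraph yields, for $1\le r\le p-1$,
$$\upsilon(p)\ge r\iff \exists\,q:6q^*\notin S(p;r)\iff S(p;r)\not\supseteq(\mathbb Z/p\mathbb Z)\setminus\{0\}\iff S(p;r)\ne\mathbb Z/p\mathbb Z,$$
the last step because $0\in S(p;r)$.

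Finally I would assemble well-definedness together with the identity. Nonemptiness is immediate, since $S(p;1)=\{0\}\ne\mathbb Z/p\mathbb Z$ as $p>3$. For boundedness, Lemma~\ref{remainder} gives $\iota_q(p)<\rho(p)=p-1$ for every $q$, whence $\upsilon(p)\le p-2$; taking $r=p-1$ in the displayed equivalence then forces $S(p;p-1)=\mathbb Z/p\mathbb Z$, and monotonicity propagates this to all $r\ge p-1$. Therefore $\{r:S(p;r)\ne\mathbb Z/p\mathbb Z\}=\{1,\dots,\upsilon(p)\}$ is finite and nonempty, so $h(p)$ is well-defined and $h(p)=\upsilon(p)$, as claimed.
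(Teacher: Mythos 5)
Your proof is correct and follows essentially the same route as the paper's: both reduce the first collision to one of opposite parity (using $\ord_p(9)=(p-1)/2$ together with Lemma \ref{remainder}), rewrite such a collision as $3\cdot 9^x-9^y\equiv 6q^*\pmod*{p}$, and invoke Dirichlet's theorem to realize every nonzero residue class as $6q^*$, which yields $\upsilon(p)=h(p)$. If anything, your write-up is more explicit at two points the paper glosses over: you pin down the residue class modulo $4p$ in the Dirichlet step (the paper's ``$q^*\not\equiv a/6$'' is evidently a typo for ``$q^*\equiv a/6$''), and you deduce $S(p;p-1)=\mathbb{Z}/p\mathbb{Z}$, hence the well-definedness of $h(p)$, from the bound $\upsilon(p)\le p-2$ rather than asserting it.
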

\begin{proof}
An equal parity argument only yields that 
$\upsilon(p)\le \rho(p)$.
By Lemma \ref{remainder} the assumption 
$p\in \cal P$ implies that $\iota(p)<\rho(p)$.
Thus the smallest $\ell$ for which there exists $1\le k<\ell$
and
\begin{equation}
\label{eqi}
u_q(k)\equiv u_q(\ell)\pmod*{p}
\end{equation}
has a distinct parity
from $k$.
\par Thus we obtain a congruence of
the form $u_q(2x)\equiv u_q(2y-1)\pmod*{p}$,
which is equivalent with
\begin{equation}
\label{geval}
3\cdot 9^x-9^y\equiv 6q^*\pmod*{p}.
\end{equation}
\par First suppose that 
$S(p;r)\ne \mathbb Z/p\mathbb Z$.
If $a\not\in S(p;r)$, then
for those $q$ satisfying 
$q^*\not\equiv a/6\pmod*{p},$ we have that 
\eqref{geval} is not satisfied with 
$1\le 2x,2y-1\le r$ and so
$\iota_q(p)\ge r$. 
By Dirichlet's theorem for primes in arithmetic progression
there are indeed primes $q$  satisfying 
$q^*\not\equiv a/6\pmod*{p}$. It follows that
$\upsilon(p)\ge r$. For $r>\rho(p)$ we have
$S(p;r)=\mathbb Z/p\mathbb Z$. Note 
that if $S(p;r_0)=\mathbb Z/p\mathbb Z$ for 
some $r_0$, then $S(p;r)=\mathbb Z/p\mathbb Z$ for
every $r>r_0$. We thus conclude that $h(p)$
is well-defined and that $\upsilon(p)\ge h(p)$.
\par Next suppose that $S(p;r)= \mathbb Z/p\mathbb Z$.
Then, whatever $q\ne p$ we choose, the congruence
\eqref{geval} has a solution with $1\le 2x,2y-1\le r$.
We conclude that $\iota_q(p)\le \upsilon(p)<r$
and $\upsilon(p)<h(p)+1$. This inequality, together
with $\upsilon(p)\ge h(p)$ finishes the proof.
\end{proof}
\begin{Lem}
\label{throwprimesout}
Let $p\in \cal P$.
If there is a power of $2$ in the interval 
$[h(p),p)$, then
$p$ is a Browkin-S\u al\u ajan non-value.
\end{Lem}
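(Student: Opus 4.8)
The plan is to argue by contradiction, exploiting the uniform bound on the incongruence index supplied by Lemma~\ref{upsilon} together with the fact that powers of $2$ already discriminate many initial terms (Lemma~\ref{prop2n}). Suppose, then, that $p\in\mathcal P$ is a Browkin-S\u al\u ajan value, i.e. $D_q(n)=p$ for some $n\ge 1$. Since Corollary~\ref{bertie} gives $D_q(1)=1$, we may assume $n\ge 2$.

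First I would observe that, by the very definition of the discriminator, the equality $D_q(n)=p$ forces $u_q(1),\dots,u_q(n)$ to be pairwise incongruent modulo $p$; hence $n\le \iota_q(p)$ directly from the definition of the incongruence index. Because $p\in\mathcal P$ and $q\ne p$, Lemma~\ref{upsilon} applies and yields $\iota_q(p)\le \upsilon(p)=h(p)$. Combining the two inequalities gives the crucial bound $n\le h(p)$.

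Next I would invoke the hypothesis of the lemma: let $2^e$ be a power of $2$ lying in the interval $[h(p),p)$, so that $h(p)\le 2^e<p$. Then $n\le h(p)\le 2^e$, and Lemma~\ref{prop2n} guarantees that $u_q(1),\dots,u_q(n)$ are pairwise distinct modulo $2^e$. Consequently $D_q(n)\le 2^e<p$, contradicting $D_q(n)=p$. This contradiction shows that $p$ is a Browkin-S\u al\u ajan non-value.

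The argument is short and presents no serious obstacle; the only point requiring care is the chain $n\le \iota_q(p)\le\upsilon(p)=h(p)\le 2^e<p$, and in particular the recognition that Lemma~\ref{upsilon} furnishes a bound on $\iota_q(p)$ that is \emph{uniform in $q$} and strictly below $p$ precisely when the interval $[h(p),p)$ contains a power of $2$. The assumption $p\in\mathcal P$ is exactly what makes $h(p)$ well-defined and the estimate $\iota_q(p)\le h(p)$ available; without it one could not translate the power-of-$2$ condition into the needed upper bound $D_q(n)\le 2^e$.
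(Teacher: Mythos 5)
Your proof is correct and follows essentially the same route as the paper's: by contradiction, via the chain $n\le\iota_q(p)\le\upsilon(p)=h(p)\le 2^e<p$ from Lemma~\ref{upsilon}, and then using the power of $2$ (Lemma~\ref{prop2n}) to force $D_q(n)\le 2^e<p$. The only difference is cosmetic: you cite Lemma~\ref{prop2n} explicitly where the paper merely asserts that $2^e$ discriminates the first $h(p)$ terms.
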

\begin{proof}
By contradiction. Recall that $p\in \cal P$ implies
that $p\ne q$.
If $D_q(n)=p$ for some $n$, then $n\le \iota_q(p)
\le \upsilon(p)=h(p)$
by Lemma \ref{upsilon}. Now if there is a power of two, say
$2^e$, in
the interval $[h(p),p)$, it discriminates the first
$h(p)$ values of $u_q$. As $2^e<p$, it follows that $D_q(n)\le 2^e$. Contradiction.
\end{proof}
\begin{cor}\label{criterionpower2}
Let $p\in \cal P$. If $h(p)\le (p+1)/2,$
then $p$ is a Browkin-S\u al\u ajan non-value.
\end{cor}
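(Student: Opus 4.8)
The final statement to prove is Corollary \ref{criterionpower2}, which asserts that if $p\in\mathcal P$ and $h(p)\le(p+1)/2$, then $p$ is a Browkin-S\u al\u ajan non-value.

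The plan is to deduce this directly from Lemma \ref{throwprimesout}, whose hypothesis is that the half-open interval $[h(p),p)$ contains a power of $2$. So my whole task reduces to verifying that the numerical assumption $h(p)\le(p+1)/2$ forces such a power of $2$ to lie in $[h(p),p)$; once that is established, Lemma \ref{throwprimesout} delivers the conclusion that $p$ is a non-value with no further work. First I would recall that any half-open interval $[N,2N)$ with $N\ge 1$ contains exactly one power of $2$, since the powers of $2$ are spaced so that consecutive ones differ by a factor of $2$. The relevant interval here is $[h(p),p)$, so I want to show it contains a power of $2$ under the standing hypothesis.

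The key step is the following length estimate. Since $p\in\mathcal P$ and hence $p\ge 7$ is odd, the hypothesis $h(p)\le(p+1)/2$ gives $2h(p)\le p+1$, and because $p$ is odd we may sharpen this to $2h(p)\le p+1$, whence $2h(p)-1\le p$, i.e. $2h(p)\le p+1\le 2\cdot\lceil p/2\rceil$. The cleanest way to see that $[h(p),p)$ meets the powers of $2$ is to let $2^e$ be the smallest power of $2$ that is $\ge h(p)$; then by minimality $2^{e-1}<h(p)$, so $2^e<2h(p)\le p+1$, giving $2^e\le p$. Since $p$ is odd and $2^e$ is even, the inequality $2^e\le p$ is strict, so $2^e<p$. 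Thus $h(p)\le 2^e<p$, exactly placing a power of $2$ inside $[h(p),p)$.

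With that geometric fact in hand, the corollary follows immediately: the hypotheses of Lemma \ref{throwprimesout} are met, so $p$ is a Browkin-S\u al\u ajan non-value. I do not anticipate any serious obstacle here, since the statement is a clean quantitative specialization of Lemma \ref{throwprimesout}; the only point requiring a little care is the boundary case, namely checking that the power of $2$ we produce is genuinely less than $p$ rather than equal to it. This is where the parity of $p$ (odd, since $p\in\mathcal P$ means $p>3$ is prime) is essential: it rules out the degenerate possibility $2^e=p$ and guarantees the strict inequality $2^e<p$ needed for the half-open interval. Therefore the proof amounts to the one-line observation that $h(p)\le(p+1)/2$ places the least power of $2$ above $h(p)$ strictly below $p$, followed by an appeal to Lemma \ref{throwprimesout}.
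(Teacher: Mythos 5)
Your proof is correct and follows exactly the route the paper intends: the corollary is stated as an immediate consequence of Lemma \ref{throwprimesout}, and your interval argument (the least power of $2$ above $h(p)$ satisfies $2^e < 2h(p) \le p+1$, hence $2^e < p$ by parity) is precisely the elementary verification the paper leaves implicit. No gaps; the careful treatment of the boundary case $2^e = p$ via the oddness of $p$ is exactly the right point to check.
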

This corollary gives a very powerful and easy to implement criterion to exclude small values of $p$ from the possible Browkin-S\u al\u ajan values. By numerical work done in Maple and Mathematica, we infer that $h(p)\le (p+1)/2$ for all primes $31\le p<3000,$ $p\in\cal P,$ see Table \ref{valuesh}. Finally, by Lemma \ref{throwprimesout} we are left only with $p=7$ as potential exception.
\begin{table}[h]

\begin{tabular}{|c|c|c|c|}

\hline
$p$ & $h(p)$ & $p$ & $h(p)$ \\ 
\hline\hline
$5$ & $3$ & $31$ & $16$\\ 
\hline
$7$ & $5$ & $43$ & $21$\\ 
\hline
$11$ & $7$ & $47$ & $20$\\ 
\hline
$17$ & $11$ & $53$ & $20$\\ 
\hline
$19$ & $11$ & $59$ & $23$\\ 
\hline
$23$ & $12$ & $71$ & $25$\\ 
\hline
$29$  & $16$ & $79$ & $27$\\
\hline
 
\end{tabular}
\caption{Values of $h(p)$ for $p$ in $\cal P$}\label{valuesh}
\end{table}
\begin{Lem}
\label{7remains!}
Suppose that $D_q(n)=p$ with $p\ne q$ a prime.
Then $n=5$ and $p=7$.
\end{Lem}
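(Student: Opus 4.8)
The plan is to show that the only prime $p \neq q$ that can appear as a value $D_q(n)$ is $p = 7$, and that when it does occur we must have $n = 5$. By Proposition \ref{reducetoprime} together with Corollary \ref{Nobigprimes}, any prime value $p \neq q$ must lie in $\cal P$ and satisfy $p \leq 2060$. The numerical computation summarized in Table \ref{valuesh} and Corollary \ref{criterionpower2} then shows that $h(p) \leq (p+1)/2$ for every $p \in \cal P$ with $31 \leq p < 3000$, which by Lemma \ref{throwprimesout} (via the presence of a power of $2$ in $[h(p), p)$) eliminates all such primes. Thus I would first reduce to the finitely many small primes $p \in \{5, 7, 11, 17, 19, 23, 29\}$ and handle each using the tabulated value of $h(p)$.

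For each of these small primes I would invoke Lemma \ref{throwprimesout}: if the interval $[h(p), p)$ contains a power of $2$, then $p$ is a non-value. Checking the table, $h(11) = 7$ and $[7,11)$ contains $8$; $h(17) = 11$ and $[11,17)$ contains $16$; $h(19) = 11$ and $[11,19)$ contains $16$; $h(23) = 12$ and $[12,23)$ contains $16$; $h(29) = 16$ and $[16,29)$ contains $16$; and $h(5) = 3$ with $[3,5)$ containing $4$. So all of these are excluded, leaving only $p = 7$, where $h(7) = 5$ and the interval $[5,7)$ contains no power of $2$. Hence $p = 7$ is the unique surviving candidate.

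It then remains to determine for which $n$ we can have $D_q(7) = 7$. Since $7 \in \cal P$, we have $\upsilon(7) = h(7) = 5$ by Lemma \ref{upsilon}, so any $n$ with $D_q(n) = 7$ must satisfy $n \leq \iota_q(7) \leq \upsilon(7) = 5$. On the other hand, $D_q(n) \geq n$ by Corollary \ref{bertie}, and moreover for $n \leq 4$ the power of two $4 = 2^2$ already discriminates the first $4$ terms (by Lemma \ref{prop2n}, since $4 \geq n$), giving $D_q(n) \leq 4 < 7$. Therefore $n = 5$ is forced.

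The main obstacle I anticipate is not in the logical structure, which is a short deduction from the already-established machinery, but rather in the reliance on the numerical verification underlying Table \ref{valuesh} and the claim (asserted just before the statement) that $h(p) \leq (p+1)/2$ for all $p \in \cal P$ with $31 \leq p < 3000$. The cleanest write-up would simply cite Corollary \ref{criterionpower2} to dispatch all primes $p \geq 31$ in $\cal P$ in one stroke, then treat the finitely many primes $p < 31$ by direct inspection of their tabulated $h(p)$ via Lemma \ref{throwprimesout}, and finally pin down $n = 5$ using $\upsilon(7) = 5$ together with the trivial bounds $n \leq D_q(n) = 7$ and $D_q(n) \leq 4$ for $n \leq 4$.
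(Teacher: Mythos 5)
Your proposal is correct, and its first two thirds (the reduction to $p=7$ via Proposition \ref{reducetoprime}, Corollary \ref{Nobigprimes}, Corollary \ref{criterionpower2} and Table \ref{valuesh} with Lemma \ref{throwprimesout}) coincides with what the paper does in the text immediately preceding the lemma. Where you genuinely diverge is in pinning down $n=5$. The paper argues more crudely at first: since $u_q(1)\equiv u_q(7)\pmod*{7}$, the prime $7$ can discriminate at most $6$ terms, and since $4$ discriminates any $n\le 4$ terms, only $n\in\{5,6\}$ survive; the case $n=6$ is then killed by the explicit residue-class computation of Lemma \ref{Dq(5)}, which shows $D_q(6)=7$ only for $q=7$, excluded by the hypothesis $p\ne q$. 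You instead use the universal incongruence index: $n\le \iota_q(7)\le \upsilon(7)=h(7)=5$ by Lemma \ref{upsilon} and the tabulated value $h(7)=5$, which rules out $n=6$ without ever touching Lemma \ref{Dq(5)}. Both routes rest on a finite verification — yours on the computation of the sets $S(7;r)$ behind the entry $h(7)=5$, the paper's on testing residue classes of $q$ modulo $28$ — so neither is more elementary in substance, but yours is the more uniform one, since it runs the exceptional prime $7$ through exactly the same machinery that eliminated all the other primes, and it keeps Lemma \ref{Dq(5)} needed only for its true purpose in the paper, namely determining \emph{for which} $q$ the value $D_q(5)=7$ actually occurs (a statement your lemma, being an implication, does not address).
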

\begin{proof} 
We note that 7 can only be a discriminator for $n=5$ and $n=6.$
Namely, 4 is a discriminator 
for $n\le 4$ and 7 discriminates at
most 6 values as $u_q(1)\equiv u_q(7)\pmod*{7}$.
It is not difficult to show that $D_q(6)=7$ if $q=7$ and
$D_q(5)=8$ in all other cases. Thus we conclude
that $n=5$.
\end{proof}

\noindent {\tt Commentary}.  In \cite{MZ} the idea was
to bound $\iota_5(p)$ by $(p-1)/2$, leading to
the conclusion that 
$p$ is a  Browkin-S\u al\u ajan non-value. Here the
basic idea is the same, but now with
$\upsilon(p)=\{\iota_q(p):q\ne p,~q\ge 5\}$. That
turns out to be rather more difficult and so this
section is mainly new.

\subsection{Discriminator values for 
small fixed $n$}
Obviously as $n$ is fixed and
$q$ ranges over the primes $\ge 5$, $D_q(n)$ can 
assume only finitely many possible values.
Indeed, trivially one has $D_q(1)=1$, $D_q(2)=2$, $D_q(3)=4$ and 
$D_q(4)=5$. The values $D_q(5)$ and $D_q(6)$ 
are slightly more difficult to determine.
\begin{Lem}
\label{Dq(5)}
We have
$$D_q(5)=
\begin{cases}
7 & \text{~if~}q=7\text{~or~}q\equiv \pm 1\pmod*{28};\\
8 & \text{~otherwise,}
\end{cases}
\text{~and~}D_q(6)=
\begin{cases}
7 & \text{~if~}q=7;\\
8 & \text{~otherwise.}
\end{cases}
$$
\end{Lem}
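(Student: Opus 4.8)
The plan is to determine $D_q(5)$ and $D_q(6)$ by direct computation of the relevant residues, using the upper bound from Corollary \ref{bertie} to drastically limit the candidate values. Since $D_q(5)\le 2\cdot 5-1=9$ and $D_q(5)\ge 5$, the only candidates are $5,6,7,8,9$; but $3\nmid D_q(n)$ by Lemma \ref{notdrie}, so $6$ and $9$ are excluded, and Lemma \ref{prop2n} shows $8=2^3$ discriminates $u_q(1),\dots,u_q(5)$ (as $5\le 8$), giving $D_q(5)\le 8$. Thus $D_q(5)\in\{5,7,8\}$. Similarly $D_q(6)\in\{6,7,8\}$, and excluding multiples of $3$ leaves $D_q(6)\in\{7,8\}$ with $8$ always an upper bound by Lemma \ref{prop2n}. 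So the whole problem reduces to deciding, for each modulus $5$ and $7$, whether the first few terms are pairwise incongruent.

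First I would dispose of the modulus $5$. We need to check whether $u_q(1),\dots,u_q(5)$ are distinct modulo $5$. By Lemma \ref{prop5n}a) (with $q=5$ being a special case) the terms are distinct modulo $5$ iff $5$ is Artin and $5\ge n+1$; for $n=5$ this forces $5\ge 6$, which is false, so $5$ cannot discriminate five terms for any $q$ (and for $q=5$ itself one computes directly that $u_5$ repeats too early modulo $5$). Hence $5\notin\{D_q(5)\}$ always, and the only remaining question for $D_q(5)$ is whether $7$ discriminates, i.e. whether $D_q(5)=7$ or $D_q(5)=8$.

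The heart of the proof is therefore the analysis modulo $7$. Here the key observation is that $u_q(j)\equiv (3^j-q^*(-1)^j)/4\pmod 7$, and modulo $7$ the term depends on $q$ only through $q^*\bmod 7$ (and the inverse of $4$, which is $2$ modulo $7$). I would write out the five residues $u_q(1),\dots,u_q(5)$ modulo $7$ as explicit affine functions of $q^*$, using that $3^j\bmod 7$ cycles through $3,2,6,4,5,1$. A collision $u_q(i)\equiv u_q(j)\pmod 7$ then becomes a linear condition on $q^*$ modulo $7$. Running through all pairs $1\le i<j\le 5$, I would collect which residue classes of $q^*\pmod 7$ force some collision among the first five terms, and which residue classes leave all five distinct. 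The claim to verify is that all five are pairwise distinct modulo $7$ precisely when $q^*$ lies in the good classes, and that these translate, via $q^*=(-1)^{(q-1)/2}q$ and quadratic reciprocity for $3$ and $7$, into the congruence condition $q\equiv\pm 1\pmod{28}$ (together with the exceptional case $q=7$, where one checks the short sequence by hand). The main obstacle is purely bookkeeping: correctly translating the condition ``$q^*$ avoids the bad classes modulo $7$'' into a clean congruence on $q$ modulo $28$, since $q^*$ encodes both $q\bmod 7$ and the sign $(-1)^{(q-1)/2}$ depending on $q\bmod 4$; the modulus $28=4\cdot 7$ arises exactly from combining these.

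Finally I would settle $D_q(6)$. Since $8$ is always an upper bound and $6$ is excluded as a multiple of $3$, it suffices to decide whether $7$ discriminates $u_q(1),\dots,u_q(6)$. But $u_q(1)\equiv u_q(7)\pmod 7$ limits $7$ to discriminating at most six terms, and one must check the sixth term against the first five modulo $7$. I would compute $u_q(6)\bmod 7$ and test it against the earlier terms; the expectation, consistent with Lemma \ref{7remains!}, is that for $q=7$ one genuinely gets six distinct residues modulo $7$ (so $D_7(6)=7$), whereas for every other $q$ some collision among the six terms forces $D_q(6)=8$. A clean way to organize this is to note that when $q=7$ we have $q^*=-7\equiv 0\pmod 7$, so $u_q(j)\equiv 3^j\cdot 2\pmod 7$ is simply $2$ times a full cycle of $3^j$, giving six distinct nonzero residues; for $q\ne 7$ the extra additive term reintroduces a collision within the first six indices, which I would confirm by the same pair-by-pair residue check as above.
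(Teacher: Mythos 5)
Your approach is essentially the paper's: the paper also reduces everything to a finite collision check modulo $7$, writing the first terms as affine functions of $k$ (where $q=4k+1$ or $q=4k+3$) and testing all residue classes of $k$ modulo $7$; your parametrization by $q^*\bmod 7$ is the same computation in a slightly cleaner one-case form. Indeed, each collision $u_q(i)\equiv u_q(j)\pmod* 7$ is a linear condition on $q^*$, the collision-free classes for five terms turn out to be $q^*\equiv 0,1\pmod* 7$, and recombining with $q\bmod 4$ by the Chinese remainder theorem --- no quadratic reciprocity is needed, since $q^*=(-1)^{(q-1)/2}q$ only couples the sign to $q\bmod 4$ --- gives exactly $q=7$ or $q\equiv \pm 1\pmod*{28}$. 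Your explicit reduction of the candidate list to $\{5,7,8\}$ via Corollary \ref{bertie}, Lemma \ref{notdrie} and Lemma \ref{prop2n} is also correct, and is left implicit in the paper.

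One step is wrongly justified, though the fix is one line. You cannot invoke Lemma \ref{prop5n}a) to rule out the modulus $5$: that lemma concerns the terms of $u_q$ modulo $q$ itself, so for $q\ne 5$ it says nothing about $u_q$ modulo $5$ (and your parenthetical has it backwards: the lemma is what applies to the case $q=5$, while general $q$ needs a direct argument). The direct argument is immediate: $u_q(5)-u_q(1)=(3^5-3)/4=60\equiv 0\pmod* 5$ for every $q$, so $u_q(1)\equiv u_q(5)\pmod* 5$ and $5$ never discriminates five terms (equivalently, modulo $5$ the sequence is purely periodic with period $4$). With that repair, and with the mod-$7$ collision tables actually carried out --- including the sixth term $u_q(6)\equiv 2-2q^*\pmod* 7$, whose comparison with $u_q(1),u_q(3),u_q(5)$ adds the forbidden classes $q^*\equiv 6,1,5\pmod* 7$ and hence forces $q^*\equiv 0\pmod* 7$, i.e.\ $q=7$, for six distinct residues --- your argument is complete and yields exactly the statement of the lemma.
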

\begin{proof}
Writing, say, $q=4k+ 1,$ the Browkin-S\u al\u ajan sequence reads as $$k+1,2-k,k+7,20-k,61+k,182-k,547+k,\ldots.$$
Simply by testing all residue classes of $k$ modulo 7 
one concludes that $D_q(5)=7$ iff $k\equiv0\pmod* 7.$ If $q=4k+3,$ the sequence becomes 
$$-k,3+k,6-k,21+k,60-k,183+k,546-k,\ldots $$
and, by the same method, one concludes that $D_q(5)=7$ iff
$k\equiv 6\pmod*7$ or $q=7.$ Also, one sees that $D_q(6)=7$ 
iff $q=7.$ 
\end{proof}


\noindent {\tt Commentary}. There is no counterpart of this
in \cite{MZ}.

\section{The proof of the main result}\label{proofmainresult}
In Section~\ref{preparations}, we established that powers of $2$ and powers of prime numbers $p>3$ are candidates for Browkin-S\u al\u ajan values. On fixing the prime $q$, it is seen by Lemma \ref{prop5n} that 
powers of $q$ itself are candidates too. 
Finally, after studying the characteristics of the period and the incongruence index of the Browkin-S\u al\u ajan sequence, we discarded in Section~\ref{non-values} any other possible candidates, except for the value
$7$ for certain primes $q$.

\begin{proof}[Proof of Theorem \ref{Bmain1}]
It follows from Proposition~\ref{reducetoprime} that, if $d>1$ is a 
Browkin-S\u al\u ajan value, then either $(2q,d)>1$ or $d\in \mathcal P$. 
By Lemma \ref{7remains!}, if $D_q(n)=p$ for some
integer $n\ge 1$ 
and some odd 
prime $p\ne q$, then $n=5$ and $p=7$. It is easy
to check that the predicted value for $D_q(5)$ 
in the statement of the theorem matches the 
actual value given in Lemma \ref{Dq(5)}.
Thus from now on,
we may assume that 
$n\ne 5$ and $(2q,d)>1$. Then, by 
Lemma~\ref{redpp}, $d$ has to be a prime power 
and hence
the discriminator must be a power of $2$ or a power of $q$.

Note that $2^e$ discriminates $u_q(1),\ldots,u_q(n)$
iff $2^e\ge n$. Our analysis splits into several cases.\\
{\sc{Case 1.}} $q$ is Artin and not Mirimanoff.\\
By Lemma
\ref{prop5n} it follows that 
$q^f$ discriminates 
$u_q(1),\ldots,u_q(n)$
iff $q^f\ge qn/(q-1)$.
We infer that 
$$D_q(n)=\min\{2^e,q^f:2^e\ge n,~q^f\ge \frac{q}{q-1}n\}.
$$
\noindent {\sc{Case 2.}} $q$ is Artin and Mirimanoff.\\
By Lemma \ref{redpp} we must have $f=1$. In case $q$ is Fermat, by 
Lemma \ref{valuenotFermat} there is no $n$ with $D_q(n)=q$.
Note that $\iota(q)=q-1$ if $q$ is Artin and hence we must have
$q\ge n+1$. Suppose that $q$ is not Fermat and
$2^e$ is the largest power of $2$ less than $q-1$.
Then $D_q(n)=q$ for $n=2^e+1,\ldots,q-1$. Thus
we have showed that
$D_q(n)$ equals
$$
D_q(n)=
\begin{cases}
\min\{2^e,q:2^e\ge n,~q\ge n+1\} & \text{if~}q\text{~is Artin, Mirimanoff, but not Fermat};\\
\min\{2^e:2^e\ge n\} & \text{if~}q\text{~is Artin, Mirimanoff and Fermat}.
\end{cases}
$$
{\sc{Case 3.}} $q$ is not Artin.\\ 
It follows by Lemma \ref{qpowerspresent} 
that $q$ and its powers are all 
Browkin-S\u al\u ajan non-values.
Thus in this case we have 
$$D_q(n)=\min\{2^e:2^e\ge n\}.$$
\par This proves that the four part formula 
for $D_q(n),$ together with the exceptional case given in 
the statement of the theorem, 
is correct. 
\par As obviously all powers of $2$ occur, it remains
to determine which powers of $q$ do occur.
This we did in Lemma \ref{qpowerspresent}. On invoking 
Proposition \ref{allowedexponent},
the proof is completed.
 \end{proof}
 
\noindent {\tt Commentary}. This proof is considerably
more involved than in case $q=5$, as there are now eight
cases to be considered.

\section{Special primes}
\label{speciaal}
We recapitulate some material on Artin, Fermat and Mirimanoff primes.

\subsection{Artin primes}
Recall that an `Artin prime' we call a prime 
$q$ such that  $3$ is a primitive root modulo $q$.
How special are Artin primes? How
many Artin primes $q\le x$ are there? This is related to the celebrated Artin
primitive root conjecture. We refer to the appendix of \cite{MZ} for more information,
or Moree \cite{Msurvey} for much more information.

\subsection{Fermat primes}
A Fermat prime is a prime of the form $2^m+1$ with $m\ge 1$. It is a trivial observation
that we must have $m=2^e$. Currently the only Fermat primes known are
$3,5,17,257$ and $65537$.

\begin{Lem}
\label{3Fermat}
If a prime $q>3$ is Fermat, then $q$ is Artin.
\end{Lem}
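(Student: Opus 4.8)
The plan is to reduce the claim to a single Legendre symbol computation. Since $q>3$ is Fermat, we may write $q=2^m+1$ with $m=2^e$ and $e\ge 1$, so in particular $m\ge 2$ and $q\equiv 1\pmod{4}$. The decisive structural remark is that the group $(\mathbb Z/q\mathbb Z)^*$ has order $q-1=2^m$, a power of $2$. In a cyclic group whose order is a power of $2$, an element generates the whole group if and only if it is not a square: the squares form the unique index-$2$ subgroup, and every element outside it has odd discrete logarithm relative to a fixed generator, hence order exactly $2^m$. Thus $3$ is a primitive root modulo $q$ if and only if $3$ is a quadratic non-residue modulo $q$, and the lemma is equivalent to the statement that $\big(\frac{3}{q}\big)=-1$.

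Second, I would compute $\big(\frac{3}{q}\big)$ by quadratic reciprocity. Because $q\equiv 1\pmod{4}$, reciprocity gives $\big(\frac{3}{q}\big)=\big(\frac{q}{3}\big)$. To evaluate the right-hand side I reduce $q$ modulo $3$: since $2\equiv -1\pmod{3}$ and $m=2^e$ is even, we have $2^m\equiv 1\pmod{3}$, whence $q=2^m+1\equiv 2\pmod{3}$. As $2$ is a non-residue modulo $3$, this yields $\big(\frac{q}{3}\big)=\big(\frac{2}{3}\big)=-1$, so $\big(\frac{3}{q}\big)=-1$, which completes the argument.

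There is essentially no obstacle here; the only point requiring care is to confirm that $m$ is even (equivalently $q\equiv 1\pmod{4}$) so that the clean form of reciprocity applies. This is exactly where the hypothesis $q>3$ is used: it forces $m\ge 2$, hence $m=2^e$ with $e\ge 1$, in contrast to $q=3$ where $m=1$. One could alternatively bypass reciprocity and invoke the supplementary rule that $3$ is a quadratic residue modulo an odd prime $q$ precisely when $q\equiv\pm 1\pmod{12}$; combining $q\equiv 1\pmod{4}$ with $q\equiv 2\pmod{3}$ gives $q\equiv 5\pmod{12}$, again forcing $\big(\frac{3}{q}\big)=-1$. Either route makes the proof only a few lines long.
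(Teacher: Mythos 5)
Your proof is correct and follows exactly the paper's approach: the paper's own (sketch) proof reduces the claim to showing $\bigl(\tfrac{3}{q}\bigr)=-1$ and then invokes quadratic reciprocity, leaving the details to the reader. You have simply supplied those details — the equivalence of ``primitive root'' and ``non-residue'' when $q-1$ is a power of $2$, the reduction $q\equiv 2\pmod{3}$, and the reciprocity step — all of which are carried out correctly.
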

\begin{proof} Note that it is enough to show that $\big({3\over q}\big)=-1$. Now apply
the law of quadratic reciprocity (details left to the reader). \end{proof}

\subsection{Mirimanoff primes}
Currently there are only two Mirimanoff primes known, namely $11$ and $1006003$,
see Keller and Richstein \cite{KR}. The prime $1006003$ is Artin, 
but $11$ is not. The Mirimanoff primes arose in the study of Fermat's Last 
Theorem, see, e.g., Ribenbom \cite{FLT} or Ribenboim \cite[Chapter 8]{FLT2}.
\subsection{Fermat-Mirimanoff primes}
A prime that is both Fermat and Mirimanoff we call a Fermat-Mirimanoff prime. 
Currently no such prime is known and perhaps they do not exist at all.
Note that by Lemma \ref{3Fermat} every Fermat-Mirimanoff prime is an
Artin-Fermat-Mirimanoff prime.

\section{Some numerical results}\label{numericalresults}
\subsection{Theorem \ref{Bmain1} in action}
In Tables \ref{tableq=5}--\ref{tableq=17} we demonstrate Theorem \ref{Bmain1}
in case $q=5,7,11,17,$ and $q=29.$ Highlighted are, in each case, the exceptional value 7 and the powers of $q.$ 

\begin{table}[h]
\begin{tabular}{|c|c|c|c|}
\hline
$n$ & $D_q(n)$ & $n$ & $D_q(n)$ \\
\hline\hline
$1$ & $1$ & $129-256$ & $256$\\
\hline
$2$ & $2$ & $257-512$ & $512$\\
\hline
$3-4$ & $4$ & $513-1024$ & $1024$\\
\hline
$5-8$ & $8$ & $1025-2048$ & $2048$\\
\hline
$9-16$ & $16$ & $2049-2500$ & $\underline{3125}$\\
\hline
$17-20$ & $\underline{25}$ & $2501-4096$ & $4096$\\
\hline
$21-32$ & $32$ & $4097-8192$ & $8192$\\
\hline
$33-64$ & $64$ & $8193-12500$ & $\underline{15625}$\\
\hline
$65-100$ & $\underline{125}$ & $12501-16384$ & $16384$\\
\hline
$101-128$ & $128$ & $16385-32768$ & $32768$\\
\hline
\end{tabular}
\caption{$q=5;$ $q$ is Artin, Fermat, but not Mirimanoff}\label{tableq=5}
\end{table}

\begin{table}[h]

\begin{tabular}{|c|c|c|c|}

\hline
$n$ & $D_q(n)$ & $n$ & $D_q(n)$ \\
\hline\hline
$1$ & $1$ & $129-256$ & $256$\\
\hline
$2$ & $2$ & $257-294$ & $\underline{343}$\\
\hline
$3-4$ & $4$ & $295-512$ & $512$\\
\hline
$5-6$ & $\underline{7}$ & $513-1024$ & $1024$\\
\hline
$7-8$ & $8$ & $1025-2048$ & $2048$\\
\hline
$10-16$ & $16$ & $2049-2058$ & $\underline{2401}$\\
\hline
$17-32$ & $32$ & $2059-4096$ & $4096$\\
\hline
$33-42$ & $\underline{49}$ & $4097-8192$ & $8192$\\
\hline
$43-64$ & $64$ & $8193-16384$ & $16384$\\
\hline
$65-128$ & $128$ & $16385-32768$ & $32768$\\
\hline

\end{tabular}
\caption{$q=7;$ $q$ is Artin, not Fermat and not Mirimanoff}\label{tableq=7}
\end{table}

\begin{table}

\begin{tabular}{|c|c|c|c|}

\hline
$n$ & $D_q(n)$ & $n$ & $D_q(n)$ \\
\hline\hline
$1$ & $1$ & $129-256$ & $256$ \\   
\hline
$2$ & $2$ & $257-512$ & $512$ \\    
\hline
$3-4$ & $4$ & $513-1024$ & $1024$\\   
\hline
$5-8$ & $8$ & $1025-2048$ & $2048$\\    
\hline
$9-16$ & $16$ & $2049-4096$ & $4096$\\  
\hline
$17-32$ & $32$ & $4097-8192$ & $8192$\\  
\hline
$33-64$ & $64$ & $8193-16384$ & $16384$\\
\hline
$65-128$ & $128$ & $16385-32768$ & $32768$\\
\hline

\end{tabular}
\caption{$q=11;$ $q$ is not Artin, not Fermat, but Mirimanoff}\label{tableq=11}
\end{table}


\begin{table}

\begin{tabular}{|c|c|c|c|}
\hline
$n$ & $D_q(n)$ & $n$ & $D_q(n)$ \\
\hline\hline
$1$ & $1$ &  $257-272$ & $\underline{289}$ \\   
\hline
$2$ & $2$ & $273-512$ & $512$\\  
\hline
$3-4$ & $4$ & $513-1024$ & $1024$\\  
\hline
$5-8$ & $8$ & $1025-2048$ & $2048$\\
\hline
$9-16$ & $16$ & $2049-4096$ & $4096$\\  
\hline
$17-32$ & $32$ & $4097-4624$ & $\underline{4913}$\\
\hline
$33-64$ & $64$ & $4625-8192$ & $8192$\\ 
\hline
$65-128$ & $128$ &  $8193-16384$ & $16384$\\  
\hline
$129-256$ & $256$ & $16385-32768$ & $32768$\\
\hline
\end{tabular}
\caption{$q=17;$ $q$ is Artin, Fermat, but not Mirimanoff}\label{tableq=17}
\end{table}

\begin{table}[t]

\begin{tabular}{|c|c|c|c|}

\hline
$n$ & $D_q(n)$ & $n$ & $D_q(n)$ \\
\hline\hline
$1$ & $1$ & $129-256$ & $256$\\
\hline
$2$ & $2$ & $257-512$ & $512$\\
\hline
$3-4$ & $4$ & $513-812$ & $\underline{841}$\\
\hline
$5$ & $\underline{7}$ & $813-1024$ & $1024$\\
\hline
$6-8$ & $8$ & $1025-2048$ & $2048$\\
\hline
$9-16$ & $16$ & $2049-4096$ & $4096$\\
\hline
$17-28$ & $\underline{29}$ & $4097-8192$ & $8192$\\
\hline
$29-32$ & $32$ & $8193-16384$ & $16384$\\
\hline
$33-64$ & $64$ & $16385-23548$ & $\underline{24389}$\\
\hline
$65-128$ & $128$ & $23549-32768$ & $32768$\\
\hline
\end{tabular}
\caption{$q=29;$ $q$ is Artin, not Fermat and not Mirimanoff}\label{tableq=29}
\end{table}

\subsection{Prime distribution over the eight possible cases in Theorem \ref{Bmain1}}
\label{primedistribution}
Theorem \ref{Bmain1} leads to eight possible cases 
if we take into account the exceptional case where
$D_q(5)=7$ and $q\ne 7$.
These
are listed in Table \ref{eightcases}. For each case we give the first few 
examples. In three cases there are  no known 
examples. Coming up with such an example would
require finding a Fermat prime
larger than 65537 or a Mirimanoff prime larger than 1006003.
Beyond examples, we give in Table \ref{eightcases} a conjectural natural
density of the primes belonging to each subcase. These
are all rational multiples of the Artin constant $A$ 
defined in \eqref{Artinconstant}.
\par We now explain how Table \ref{eightcases} has to be read. 
In the first column we indicate whether or
not the condition $q\equiv \pm 1\pmod*{28}$
is met. 
If an entry is empty in, say, the `Fermat' column, then this
means that both Fermat and non-Fermat primes are 
allowed. The final column lists the first few examples.
\par In Table \ref{densitiestable} we list
the conditional densities of the sets of primes
belonging to each of the eight cases.
We can only prove that these densities are true under
one or both of the following assumptions:\begin{itemize}
\item [(G)] The Generalized Riemann Hypothesis.
\item [(M)] The Mirimanoff primes have natural density zero.\end{itemize}
Which assumptions we make in order to establish the density
are indicated in the first column. The column 
`Empirical' rests on a Maple computation using the first million prime numbers.

\begin{table}[t]
\begin{tabular}{|c|c|c|c|c|}
\hline
 ${\pm 1\pmod*{28}}$ & Artin & Mirimanoff & Fermat &  Examples\\
\hline\hline
 yes & yes & no &   & $29,113,197,223,281,\ldots$\\
\hline
no & yes & no &  & $5,7,17,19,31,43,53,79,\ldots$\\
\hline
yes & yes & yes & no  & none known\\
\hline
no & yes & yes & no &  $1006003,\ldots$\\
\hline
yes & yes & yes & yes &  none known \\
\hline
no & yes & yes & yes &  none known \\
\hline
yes & no & &  & $83,167,251,307,337,\ldots$\\
\hline
no & no & & & $11,13,23,37,41,47,59,\ldots$\\
\hline
\end{tabular}
\caption{The eight prime sets arising in Theorem \ref{Bmain1}}\label{eightcases}
\end{table}

\begin{table}[b]
\begin{tabular}{|c|c|c|c|}
\hline
 Assumption & Density & Numerical & Empirical \\
\hline\hline
G, M & $32A/205$ & $0.05837\ldots$  & $\approx 0.0584$\\
\hline
G, M & $173A/205$ & $0.31558\ldots$ & $\approx 0.3155$\\
\hline
M &  $0$ & 0 &   \\
\hline
M &  $0$ & 0 & \\
\hline
M &  $0$ & 0 &  \\
\hline
M &  $0$ & 0 &  \\
\hline
G &  $1/6-32A/205$ & $0.10829\ldots$ & $\approx 0.1083$\\
\hline
G &  $5/6-173A/205$  & $0.51775\ldots$ & 
$\approx 0.5178$\\
\hline
\end{tabular}
\caption{Conjectural densities of 
the eight prime sets arising in Theorem \ref{Bmain1}}\label{densitiestable}
\end{table}

\par We determine the density
in the first case given in
Table \ref{densitiestable}. If one assumes 
G and M, then it is given by Lemma \ref{blubblub}. 
Using that, under GRH, the density of Artin primes is
$A$ (see, e.g., \cite[Theorem 1.2]{AP2}) and that, 
by Dirichlet's theorem on primes in arithmetic progressions, 
the density of the primes $q\equiv \pm 1\pmod*{28}$
is $1/6$, the remaining densities are easily obtained.
\begin{Lem}[GRH]
\label{blubblub}
The density of primes $q\equiv \pm 1\pmod*{28}$ 
that are Artin equals $32A/205$.
\end{Lem}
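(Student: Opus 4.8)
The plan is to evaluate the density $\delta$ in question by Hooley's conditional (GRH) refinement of the Chebotarev density theorem, following the Möbius–inversion scheme that underlies the proof of Artin's primitive root conjecture (see the appendix of \cite{MZ} and \cite{AP2}). Writing the condition ``$3$ is a primitive root modulo $q$'' as ``for every prime $\ell$ the prime $q$ does \emph{not} split completely in $\mathbb{Q}(\zeta_\ell,3^{1/\ell})$'', inclusion–exclusion over squarefree integers gives
\[
\delta=\sum_{n\ \text{squarefree}}\mu(n)\,D(n),
\]
where $D(n)$ is the density of primes $q\equiv\pm1\pmod*{28}$ that split completely in $\mathbb{Q}(\zeta_n,3^{1/n})$. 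As in Hooley's argument, GRH is used to bound the error term in the effective Chebotarev theorem uniformly in $n$, so that the interchange of summation with the density limit is legitimate and the tail of the series contributes negligibly.

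For each squarefree $n$ I would compute $D(n)$ by Chebotarev inside the compositum $M_n=\mathbb{Q}(\zeta_m,3^{1/n})$ with $m=\lcm(28,n)$, so that $D(n)=F(n)/[M_n:\mathbb{Q}]$, where $F(n)$ counts the $\sigma\in\mathrm{Gal}(M_n/\mathbb{Q})$ that fix $\mathbb{Q}(\zeta_n,3^{1/n})$ pointwise (the splitting condition) and restrict to the classes $\pm1$ on $\mathbb{Q}(\zeta_{28})$ (the congruence condition). Parametrising $\sigma$ by its cyclotomic part $\tau\in(\mathbb{Z}/m\mathbb{Z})^*$ and its action $3^{1/n}\mapsto\zeta_n^{\,j}3^{1/n}$, the favourable elements are exactly those with $j=0$, with $\tau\equiv1\pmod*{n}$, and with $(\tau\bmod 4,\tau\bmod 7)\in\{(1,1),(-1,-1)\}$.

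The main obstacle, and the only genuinely delicate point, is the entanglement $\sqrt3\in\mathbb{Q}(\zeta_{12})$. Since $4\mid 28\mid m$, one has $\sqrt3\in\mathbb{Q}(\zeta_m)$ precisely when $3\mid n$, while $\sqrt3\in\mathbb{Q}(3^{1/n})$ precisely when $2\mid n$; hence for $6\mid n$ the degree drops by a factor $2$, giving $[M_n:\mathbb{Q}]=\varphi(m)\,n$ when $6\nmid n$ and $\varphi(m)\,n/2$ when $6\mid n$. Simultaneously the compatibility $\tau(\sqrt3)=(-1)^{\,j}\sqrt3$ forces, for $j=0$ and $6\mid n$, the extra constraint $\tau\equiv1\pmod*{12}$, which combined with $\tau\equiv1\pmod*{3}$ yields $\tau\equiv1\pmod*{4}$ and therefore annihilates the class $\tau\equiv-1\pmod*{28}$. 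Tracking this correctly is what makes the local factors at the primes $2,3,7$ nonstandard, and it is the step where care is required.

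Finally I would organise the series according to the local data at these primes. Writing $n=2^{a_2}3^{a_3}7^{a_7}s$ with $(s,42)=1$ and $a_2,a_3,a_7\in\{0,1\}$, the dependence on $s$ factors out as the Euler product
\[
\sum_{(s,42)=1}\frac{\mu(s)}{s\,\varphi(s)}=\prod_{p\neq2,3,7}\Big(1-\frac1{p(p-1)}\Big)=\frac{504}{205}\,A,
\]
while the remaining dependence on $(a_2,a_3,a_7)$ is a sum of eight terms. Substituting the degree and the count $F(n)$ obtained above (in particular the collapse to a single residue class when $6\mid n$ or $7\mid n$), this eight-term sum evaluates to $4/63$. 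Multiplying the two factors gives $\delta=\tfrac{4}{63}\cdot\tfrac{504}{205}A=\tfrac{32}{205}A$, as claimed; the arithmetic of the finite sum is entirely routine once the entanglement bookkeeping of the previous paragraph is settled.
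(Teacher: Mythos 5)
Your proposal is correct and follows essentially the same route as the paper's own sketch: the GRH (Hooley-type) density series with M\"obius inversion, an entanglement-aware degree computation at the primes $2,3,7$, and a final Euler-product evaluation giving $\tfrac{4}{63}\cdot\tfrac{504}{205}A=\tfrac{32}{205}A$, and indeed your counts match the paper's three-case degree formula (your collapse of the class $\tau\equiv-1\pmod*{28}$ when $6\mid n$ or $7\mid n$ is exactly what produces the cases $42\mid n$, $7\mid n$ and $6\nmid n$, $7\nmid n$ there). The only difference is presentational: you count the two Frobenius classes $\pm1$ inside $\mathbb{Q}(\zeta_{\lcm(28,n)},3^{1/n})$, whereas the paper passes to the maximal real subfield $\mathbb{Q}(\zeta_{28}+\zeta_{28}^{-1})$, turning the congruence condition into a complete-splitting condition and folding your factor $F(n)$ into the degree $[\mathbb{Q}(\zeta_{28}+\zeta_{28}^{-1},\zeta_n,3^{1/n}):\mathbb{Q}]$.
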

\begin{proof}[Sketch of proof]
Let $K$ be a number field.
Then the natural density of
primes $p$ that split completely
and have 3 as a primitive root exists and is given by
$$\sum_{n=1}^{\infty}\frac{\mu(n)}{[K(\zeta_n,3^{1/n}):\mathbb Q]}.$$ Using that
the primes $p$ that split completely in $\mathbb Q(\zeta_{28})$ are precisely the primes $p\equiv \pm 1\pmod*{28}$ we see that the density 
we are after equals
\begin{equation}
\label{maxreal}
\sum_{n=1}^{\infty}\frac{\mu(n)}{[\mathbb Q(\zeta_{28}+\zeta_{28}^{-1},\zeta_n,3^{1/n}):\mathbb Q]}.
\end{equation}
By some algebraic number theory 
making use of the fact that
$\mathbb Q(\zeta_{28}+\zeta_{28}^{-1})$
is the compositum of 
$\mathbb Q(\sqrt{7})$ and
the cubic real field 
$\mathbb Q(\zeta_{7}+\zeta_{7}^{-1}),$
we are led to the following degree evaluation in
case $4\nmid n$,
\begin{equation*}
\label{petergraad}
[\mathbb Q(\zeta_{28}+\zeta_{28}^{-1},\zeta_n,3^{1/n}):\mathbb Q]
=
\begin{cases}
n\varphi(n) & \text{~if~}42|n;\cr
2n\varphi(n) & \text{~if~}7|n\text{~and~}6\nmid n;\cr
\varphi(\lcm(28,n))n/2 & \text{~if~}7\nmid n.
\end{cases}
\end{equation*}
Note that since the
M\"obius function is zero for non-squarefree
numbers, it is enough to compute the degree in
case $4\nmid n$. After some calculations using the
Euler product in the form
$\sum_{(n,m)=1}\mu(n)f(n)=\prod_{p\nmid m}(1-f(p)),$
the proof is completed.
\end{proof}
The reader interested in working out the
details is referred to Moree \cite{28,AP2,Msurvey} for
similar computations that are worked out in
more detail.
 Alternatively, we have the following rigorous proof.
\begin{proof}[Second proof] By \cite[Theorem 1.2]{AP2}
we find that, under GRH, the density of the set of primes
$q\equiv 1\pmod*{28}$, respectively
$q\equiv -1\pmod*{28}$,
that are Artin, is $18A/205$, respectively 
$14A/205$.
\end{proof}
The above proof shows that the Artin primes are not
equidistributed over the
primitive residue classes modulo 28. Indeed, by Moree \cite[Theorem 1]{28}
they are not equidistributed over the
primitive residue classes modulo $d$ for any $d\ge 3$.
\par As a curiosity, we point out that the
set of primes $p$ such that 2 is a primitive
root modulo $p$ and $p$ is in various 
residue classes modulo $28$ appeared in
work of Rodier \cite{Rodier} in connection
with a coding theoretical problem involving
Dickson polynomials.\\

\noindent {\tt Commentary}.
This section is new. In \cite[Appendix A]{MZ} the
same method was used to deduce that, 
assuming the Generalized Riemann Hypothesis,
$\delta(\cal P_1)=\delta(\cal P_2)=3A/5$
and $\delta(\cal P_3)=2A/5$.\pagebreak



\par \noindent {\tt Acknowledgement.} In September 2015, Prof. Jerzy Browkin was invited by the second author to visit the Max Planck Institute for Mathematics, Bonn. The purpose was to
help guide some interns on discriminator
problems, on which Browkin himself also
published \cite{BC}. He gave a lecture 
on discriminators aimed at the interns and proposed problems.
The authors are grateful that Prof. Browkin, 
given his advanced age, was willing to make
the trip to Bonn and for sharing his ideas.
\par His passing away, a few months  after the visit, came as very sad and unexpected news for the authors. This paper is dedicated to his memory.  
\par The authors would also like to thank 
Karl Dilcher for communication on special primes and
pointing out reference \cite{KR} and 
Peter Stevenhagen for help with computing
the degree of the number field
$\mathbb Q(\zeta_{28}+\zeta_{28}^{-1},
\zeta_n,3^{1/n})$.



\begin{thebibliography}{99}

\bibitem{Apos} T. Apostol, \textit{Introduction to Analytic Number Theory}, Undergrad. Texts Math., Springer-Verlag, New York, Heidelberg, 1976.

\bibitem{Beyl} R.F. Beyl, Cyclic subgroups of the prime residue group,
{\it Amer. Math. Monthly} {\bf 84} (1977), 46–--48.

\bibitem{B} J. Browkin, E-mail to second author, May 4th, 2015.

\bibitem{BC} J. Browkin and H.-Q. Cao, Modifications of 
the Eratosthenes sieve, 
{\it Colloq. Math.} {\bf 135} (2014), 127--138. 

\bibitem{Ana} J. Cilleruelo and A. Zumalac\'arregui, 
An additive problem in finite fields with powers 
of elements of large multiplicative order, 
{\it Rev. Mat. Complut.} {\bf 27} (2014), 501--508.

\bibitem{FLM} B. Faye, F. Luca and 
P. Moree, On the discriminator of Lucas
sequences, in preparation.

\bibitem{KR} W. Keller and J. Richstein, Solutions of the congruence 
$a^{p-1}\equiv 1\pmod*{p^r}$, 
{\it Math. Comp.} {\bf 74} (2005), 927--936. 

\bibitem{28} P. Moree, On a 
conjecture of Rodier on primitive roots, 
{\it Abh. Math. Sem. Univ. Hamburg} 
{\bf 67} (1997), 165--171.


\bibitem{AP2} P. Moree, On primes in arithmetic progression having a prescribed primitive root. II, 
{\it Funct. Approx. Comment. Math.} {\bf 39} (2008), 133--144.

\bibitem{Msurvey}  P. Moree, Artin's primitive root conjecture--a survey, 
{\it Integers} {\bf 12} (2012), 1305--1416.

\bibitem{MZ} P. Moree and A.  Zumalac\'arregui, Salajan's conjecture on discriminating 
terms in an exponential sequence, {\it J. Number Theory} {\bf 160} (2016), 646--665.

\bibitem{FLT} P. Ribenboim, 
{\it 13 lectures on Fermat's last theorem}, 
Springer-Verlag, New York-Heidelberg, 1979.

\bibitem{FLT2} P. Ribenboim, 
{\it My numbers, my friends}. 
Popular lectures on number theory. Springer-Verlag, New York, 2000. 

\bibitem{Rodier}  F. Rodier, 
Estimation asymptotique de la distance minimale du dual des codes BCH et 
polyn\^omes de Dickson, 
{\it Discrete Math.} {\bf 149} (1996), 205--221. 


\end{thebibliography}
\end{document}